\renewcommand*\libertine@figurestyle{LF}
\renewcommand*\libertine@figurestyle{OsF}
\definecolor{green}{RGB}{0,127,0}
\definecolor{red}{RGB}{191,0,0}
\newtheorem{lemma}{Lemma}[section]
\newtheorem{theorem}[lemma]{Theorem}
\newtheorem{corollary}[lemma]{Corollary}
\newtheorem{proposition}[lemma]{Proposition}
\newtheorem{definition-lemma}[lemma]{Definition-Lemma}
\theoremstyle{definition}
\newtheorem{definition}[lemma]{Definition}
\newtheorem{remark}[lemma]{Remark}
\newtheorem{example}[lemma]{Example}
\newcommand{\pp}{\mathbf{p}}
\newcommand{\qq}{\mathbf{q}}
\newcommand{\C}{\mathbb{C}}
\newcommand{\Q}{\mathbb{Q}}
\newcommand{\R}{\mathbb{R}}
\newcommand{\J}{\mathsf{J}}
\newcommand{\al}{{(\alpha)}}
\newcommand{\bb}{\mathfrak{b}}
\newcommand{\M}{\mathcal{M}}
\newcommand{\tM}{\tilde{\mathcal{M}}}
\DeclareMathOperator{\Aut}{Aut}
\DeclareMathOperator{\wt}{wt}
\DeclareMathOperator*{\Res}{Res}
\DeclareMathOperator*{\Tr}{Tr}
\DeclareMathOperator{\SU}{SU}
\title[ $\bb$-Hurwitz numbers from refined topological recursion]{$\bb$-Hurwitz numbers from refined topological recursion}
	\author[N. K. Chidambaram]{Nitin Kumar Chidambaram}
\address{
	School of Mathematics, University of Edinburgh, James Clerk Maxwell Building, Peter Guthrie Tait Rd, Edinburgh EH9 3FD, U.K. }
	\address{
	\textit{	Present address:} Departamento de Matem\'aticas Fundamentales, UNED,
		Calle de Juan del Rosal, 28040 Madrid, Spain. 
}
\email{nitin.chidambaram@mat.uned.es}
\author[M.~Do\l\k{e}ga]{Maciej Do\l\k{e}ga}
\address{
Institute of Mathematics, 
Polish Academy of Sciences, 
ul. \'{S}niadeckich 8, 
00-956 Warszawa, Poland.
}
\email{mdolega@impan.pl}
\author[K.~Osuga]{Kento Osuga}
\address{Graduate School of Mathematics \& Kobayashi--Maskawa Institute for the Origin of Particles and the Universe, Nagoya University, Furocho, Nagoya, Aichi, 464-8601, Japan}
\address{Graduate School of Mathematical Sciences, University of Tokyo, Komaba 3-8-1, Meguro, Tokyo, 153-8914, Japan
}
\email{osuga@math.nagoya-u.ac.jp}
\thanks{NKC acknowledges the support of the ERC Starting Grant 948885, and the Royal Society University Research Fellowship. This research was funded in whole or in part by {\it Narodowe Centrum Nauki}, grant 2021/42/E/ST1/00162.  KO is supported by JSPS KAKENHI Grant number 22KJ0715 and 23K12968 (also in part by 21H04994 and 24K00525). This work is part of grant RYC2023-042878-I, funded by MCIN/AEI/10.13039/501100011033 and by the European Social Fund Plus (FSE+). For the purpose of Open Access, the authors have applied a CC-BY public copyright license to any Author Accepted Manuscript (AAM) version arising from this submission.}
\begin{document}
\emergencystretch 3em %%%%%%%%%%%% it makes text not go into the right margin
\begin{abstract}
We prove that single $G$-weighted $\bb$-Hurwitz numbers with internal faces are computed by refined topological recursion on a rational spectral curve, for certain rational weights $G$. Consequently, the $\bb$-Hurwitz generating function analytically continues to a rational curve. In particular, our results cover the cases of $\bb$-monotone Hurwitz numbers, and the enumeration of maps and bipartite maps (with internal faces) on non-oriented surfaces. As an application, we prove that the correlators of the Gaussian, Jacobi and Laguerre $\beta$-ensembles are computed by refined topological recursion. 
\end{abstract}

\maketitle
\tableofcontents

\section{Introduction}

 Motivated by topological string theory, Bouchard and Mari\~no \cite{BouchardMarino2008} conjectured  that simple Hurwitz numbers can be computed using the Chekhov--Eynard--Orantin topological recursion formalism \cite{ChekhovEynardOrantin2006,EynardOrantin2007}.   Subsequently proved in \cite{EynardMulaseSafnuk2011}, this conjecture  sparked an explosion of activity relating Hurwitz theory and topological recursion \cite{DoDyerMathews2017, DoLeighNorbury2016,  KramerPopolitovShadrin2022, Dunin-BarkowskiKramerPopolitovShadrin2019, BorotDoKarevLewanskiMoskovsky2023} culminating with the following recent result of \cite{AlexandrovChapuyEynardHarnad2018, BychkovDuninBarkowskiKazarianShadrin2024}: whenever the weight $G$ is a rational function times an exponential,  $G$-weighted Hurwitz numbers can be computed by topological recursion on an associated spectral curve.  The existence of topological recursion is a key  result that controls the structure of weighted Hurwitz numbers. Particular important consequences include the existence of a rational parametrization for their generating functions, existence of quantum curves and determinantal formulas, and ELSV-type formulas generalizing \cite{EkedahlLandoShapiroVainshtein2001}.

\subsection*{$\bb$-deformation}Motivated in part by the $\beta$-deformation of matrix models, one-parameter deformations of both Hurwitz theory and topological recursion have been 
 studied intensely in the last few years. Defined by \cite{ChapuyDolega2022}, $G$-weighted $\bb$-Hurwitz theory is a one-parameter interpolation between classical (complex) Hurwitz theory and its non-orientable (real) version. 
Given a formal power series $G(z) = \sum_{i\geq 0} g_i z^i$, the tau function of $G$-weighted  $\bb$-Hurwitz numbers, denoted $\tau^{(\bb)}_G$, is defined  as  
\begin{equation*}
	\tau_G^{{(\bb)}}:=\sum_{d \geq 0}\left(\frac{t \cdot \sqrt{\alpha}}{\hbar} \right)^d\sum_{\lambda \vdash d} 
	\check J_\lambda^{(\alpha)}(\sqrt{\alpha}\tilde{\pp})\prod_{\square
		\in \lambda}G(\hbar\cdot
	\tilde{c}_{\alpha}(\square)),
\end{equation*} where $\check J_\lambda^{(\alpha)}$ is an appropriately normalized Jack symmetric function and $\tilde c_\alpha$ is the deformed content of the box of the Young diagram (see \cref{sec:b-Hurwitz} for a detailed explanation). The tau function admits a topological expansion 
\begin{equation}
	\tau_G^{{(\bb)}}=
	\exp\left(\sum_{g\in\frac12\mathbb{Z}_{\geq0}}\sum_{n\in\mathbb{Z}_{\geq1}}\sum_{\mu_1,\ldots,\mu_n\in\mathbb{Z}_{\geq1}}\frac{\hbar^{2g-2+n}}{n!}\;
	F_{g,n} [\mu_1,\ldots,\mu_n] \;\frac{\tilde p_{\mu_1}}{\mu_1}\cdots
	\frac{\tilde p_{\mu_n}}{\mu_n}\right),\label{tau in terms of Fgn}
\end{equation}
and the  coefficients $F_{g,n}$ admit a combinatorial interpretation in terms of weighted real (non-oriented) branched coverings where the parameter $\bb =  \sqrt{\alpha}^{-1} - \sqrt{\alpha}$ measures non-orientability in a precise sense \cite{ChapuyDolega2022}. When $\bb = 0$, Jack functions reduce to Schur functions, only orientable contributions survive, and  $\tau^{(0)}_G$ becomes the generating function of $G$-weighted Hurwitz numbers, which is known to be a Kadomtsev--Petviashvili tau function~\cite{Guay-PaquetHarnad2017}.

Refined topological recursion, defined by \cite{KidwaiOsuga2023,Osuga2024a} building on  \cite{ChekhovEynard2006a}, is a one-parameter deformation of the  Chekhov--Eynard--Orantin topological recursion whose underlying (global) symmetry is the Virasoro algebra at  central charge $c = 1- 6 \bb^2$. Refined topological recursion takes as input a refined spectral curve, denoted $\mathcal S_{\bm \mu}$, which consists of  the  data $(\Sigma , x, y, \mathcal P_+, \{\mu_a\}_{a\in  \mathcal{P}_+})$ --- $\Sigma$ is a compact Riemann surface;  $x,y$ are two non-constant meromorphic functions on $\Sigma$ such that $x: \Sigma \to \mathbb P^1$ is of degree two;  $ \mathcal P_+$ is a subset of the set of the zeroes and poles of $y dx$ (possibly with some shift); and $\mu_a \in \mathbb C$ is  a complex number for every such $a \in \mathcal P_+$. 

The data of a refined spectral curve defines the unstable correlators $\omega_{0,1}, \omega_{0,2},$ and $\omega_{\frac{1}{2},1}$, and the refined topological recursion is a recursive (on the integer $(2g-2+n)$) construction of the stable correlators $\omega_{g,n} $ for $(g,n) \in \frac{1}{2} \mathbb Z_{\geq 0} \times \mathbb Z_{\geq 1}$ such that $(2g-2+n)>0$. When $\Sigma = \mathbb{P}^1$ which is the case we are interested in, the $\omega_{g,n}$ are proven to be symmetric meromorphic $n$-differentials on $\Sigma$ that are polynomials in the parameter $\bb$ of degree at most $2 g$ (see \cref{sec:RTR} for their definition). Upon setting $\bb = 0$, we recover the usual Chekhov--Eynard--Orantin topological recursion correlators.

\subsection*{Refined topological recursion for $\bb$-Hurwitz numbers}
Our first main result is that for certain rational weights $G(z)$, $G$-weighted $\bb$-Hurwitz numbers can be computed by  refined topological recursion.  In order to state the result, let $G(z)$ be one of the following rational weights:
\begin{equation}\label{eq:allowedweights}
	G(z) = \frac{(u_1+z)(u_2+z)}{v-z},  \quad (u_1+z)(u_2+z), \quad \frac{1}{v-z}, \quad \text{or} \quad \frac{u_1+z}{v-z}  .
\end{equation}
 We consider the corresponding refined spectral curve $\mathcal S_{\bm \mu}$ defined as follows. Set $ \Sigma  = \mathbb P^1 $ and choose the meromorphic functions $x(z) = t \frac{G(z)}{z} $ and $ y(z) = \frac{z}{x(z)} $, and the data $\mathcal P_+ $ as
\[
\mathcal P_+  = \left\{0,-u_1,-\frac{u_1u_2}{u_1+u_2+v}\right\}, \quad \left\{0,-u_1\right\}, \quad \left\{0\right\}, \quad \text{or} \quad \left\{0,-u_1\right\},
\] for $G$ as in \eqref{eq:allowedweights} respectively. In  each of the above cases, we choose $\mu_a = 0$ for all $a \in \mathcal P_+$ except 
$\mu_0 = -1  $ to complete the definition of the refined spectral curve.

\begin{theorem}\label{thm:intro1}
	For each of the weights $G(z)$ given in \eqref{eq:allowedweights}, the refined topological recursion correlators $\omega_{g,n}$ on the corresponding refined spectral curve $\mathcal S_{\bm \mu}$ are generating functions for  $G$-weighted $\bb$-Hurwitz numbers. More precisely, for any $(g,n) \in \frac{1}{2} \mathbb Z_{\geq 0} \times \mathbb Z_{\geq 1}$ except $(g,n) = (\frac{1}{2},1)$,
	expanding the $\omega_{g,n}(z_1,\ldots,z_n)$ near the point $z_i = 0$ in the local coordinate $x(z_i)^{-1}$ gives 
	\[
	\omega_{g,n}(z_1, \ldots, z_n) = \sum_{\mu_1,\dots,\mu_n \geq 1} F_{g,n}[\mu_1,\ldots,\mu_n] \prod_{i=1}^n \frac{dx(z_i)}{x(z_i)^{\mu_i+1}} ,
	\] where the $F_{g,n}$ are the expansion coefficients of the tau function $\tau^{(\bb)}_G$ in \eqref{tau in terms of Fgn}.   The above formula also holds for $\omega_{\frac{1}{2},1}$ up to certain explicit corrections (see \cref{thm:w/o} and \cref{thm:w/oother}).
\end{theorem}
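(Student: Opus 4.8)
The plan is to reduce the statement to the \emph{uniqueness} of refined topological recursion. The refined correlators $\omega_{g,n}$ on a refined spectral curve (following the setup of \cref{sec:RTR} and \cite{Osuga2024a}) are characterized as the unique symmetric meromorphic $n$-differentials having the prescribed unstable data and satisfying the refined linear and quadratic loop equations with poles confined to the ramification points of $x$. It therefore suffices to package the $\bb$-Hurwitz numbers into differentials and verify that they obey these same conditions. Accordingly, I would define
\[
\omega^{H}_{g,n}(z_1,\dots,z_n)=\sum_{\mu_1,\dots,\mu_n\geq 1} F_{g,n}[\mu_1,\dots,\mu_n]\prod_{i=1}^{n}\frac{dx(z_i)}{x(z_i)^{\mu_i+1}},
\]
a priori only as a formal series in the $x(z_i)^{-1}$ near each $z_i=0$, and show $\omega^{H}_{g,n}=\omega_{g,n}$.

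The first step is to promote these formal series to genuine rational $n$-differentials on $\mathbb P^1$ with poles only at the zeros of $dx$. For this I would exploit the algebraic origin of the $F_{g,n}$: since $\tau^{(\bb)}_G$ is a Jack-weighted sum with box weights $G(\hbar\,\tilde c_{\alpha}(\square))$, it is annihilated by a family of $W$-constraints, the $\bb$-deformation of the cut-and-join/KP constraints of the $\bb=0$ theory, arising from the Sekiguchi--Debiard operators for which the Jack functions are eigenvectors. Summing these constraints against the generating kernel $\tfrac{dx(z)}{x(z)^{\mu+1}}$ over $\mu$ converts them into functional relations among the $\omega^{H}_{g,n}$ in the variable $z$; the crucial analytic point is that these relations are rational, which simultaneously forces meromorphicity, the confinement of poles to the ramification points, and the polynomiality in $\bb$ of degree at most $2g$. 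This is precisely the analytic continuation to the rational curve announced in the abstract, and I expect it to be one of the more delicate inputs.

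Next I would identify the unstable data. A direct computation should show that the $\omega_{0,1}$, $\omega_{0,2}$, and $\omega_{\frac12,1}$ determined by the spectral curve $\mathcal S_{\bm\mu}$ — with the stated choices of $\mathcal P_+$ and $\mu_0=-1$ — reproduce the genus-zero one- and two-point generating series and, up to the explicit corrections recorded in \cref{thm:w/o} and \cref{thm:w/oother}, the genus-$\frac12$ one-point series. The mismatch in the latter is exactly the origin of the $(g,n)=(\frac12,1)$ exception in the statement, so this step must be carried out with care to isolate the correction terms.

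The crux is the last step: verifying that $\omega^{H}_{g,n}$ satisfies the refined linear and quadratic loop equations relative to the degree-two map $x$ and its deck involution $\sigma$. The linear loop equation — regularity at the ramification points of $\omega_{g,n}(z)+\omega_{g,n}(\sigma z)$ up to the $\bb$-weighted correction by $\omega_{g-\frac12,n}$ — should follow from the behaviour of the specialized cut-and-join operator under $\sigma$, while the quadratic loop equation carries the genuinely recursive content and should emerge from the quadratic, Casimir-type part of the specialized $W$-constraint. The hard part will be precisely this translation: one must track how the $\bb$-linear terms of the deformed cut-and-join operator, which encode the non-orientable contributions, reorganize after specialization into the half-integer-genus correction terms of the refined loop equations, and one must simultaneously rule out poles away from the ramification points. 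Once the loop equations, pole structure, and unstable data are all in place, the uniqueness of the refined correlators forces $\omega^{H}_{g,n}=\omega_{g,n}$, which is the claim.
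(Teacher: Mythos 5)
Your overall strategy --- turn the $W$-constraints into loop equations for the Hurwitz generating series, analytically continue, and then match against the refined correlators --- is aligned with the paper's, but it contains a genuine gap at its crux: the uniqueness statement you invoke is the \emph{unrefined} one, and it is false in the refined setting. You characterize the refined correlators as ``the unique symmetric meromorphic $n$-differentials \ldots\ satisfying the refined linear and quadratic loop equations with poles confined to the ramification points of $x$,'' and correspondingly you claim the rational functional relations force ``the confinement of poles to the ramification points.'' But for $\bb \neq 0$ the correlators $\omega_{g,n}$ are \emph{not} holomorphic away from $\mathcal{R}$: as stated in the paper (and proved in \cite{KidwaiOsuga2023,Osuga2024a}), the stable $\omega_{g,n}(z_{[n]})$ have poles at $\mathcal{R} \cup \sigma(\mathcal{P}_+) \cup \sigma(z_{[n]})$, i.e.\ also at the images under $\sigma$ of the points of $\mathcal{P}_+$ and along the anti-diagonals $z_i = \sigma(z_j)$. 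So both halves of your plan break: you cannot prove that the analytic continuations $\omega^H_{g,n}$ have poles only at $\mathcal{R}$ (they do not), and there is no uniqueness theorem with that pole condition to conclude with. Any correct uniqueness-style argument must be formulated with the refined pole structure, the residue-freeness, and the choice of $(\mathcal{P}_+,\bm\mu)$ built in.

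Relatedly, you treat the final identification as automatic once loop equations are verified, but this is where the real work lies, and the paper handles it differently. The paper's loop equations (\cref{prop:FLE}, \cref{cor:FLE}) are not mere constraints: because the operators $D_k$ contain the term $\sum_{j\geq 0}\mathsf{J}_{k-j}\mathsf{J}_{j+1}$, the equations solve for $\tilde\phi_{g,1+n}$ \emph{explicitly} in terms of $\tilde\phi_{g',n'}$ with $2g'-2+n' < 2g-2+n$; this is what makes the analytic continuation propagate by induction (your appeal to ``rationality of the relations'' does not by itself give meromorphicity of a solution). The matching with refined topological recursion is then achieved not by an abstract uniqueness principle but by rewriting the contour-integral definition of $\omega_{g,1+n}$ in the same explicit recursive form (equation \eqref{eq:omegagn}, imported from equations (A.13)--(A.15) of \cite{KidwaiOsuga2023}) and checking, via the kernel identity of \cref{lem:kernel}, that the two recursions coincide term by term; induction on $2g-2+n$ then gives equality. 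Your proposal omits this step entirely, and it cannot be recovered from the uniqueness statement you assume, since that statement is the wrong one for $\bb \neq 0$.
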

Our theorem is the first known result that the generating function of $\bb$-Hurwitz numbers has an analytic continuation to a rational algebraic curve (the spectral curve). Simultaneously, restricted to degree-2 curves, we extend the topological recursion result of \cite{BychkovDuninBarkowskiKazarianShadrin2024, AlexandrovChapuyEynardHarnad2018} and the study of rational parametrizations in enumerative combinatorics \cite{Gao1993,Chapuy2009,GouldenGuay-PaquetNovak2013a} to new models with non-orientable topologies --- applications to higher degree curves are wide-open. It is worth noting that the structure of this analytic continuation differs substantially from the $\bb = 0$ case -- indeed, the $\omega_{g,n}$ have poles along the ``anti-diagonals'' $z_i = \sigma(z_j) $  for any $i,j \in [n]$ (and at a subset of $\mathcal{P}$ for the models with internal faces which will be discussed below). The weights $G(z) = (u_1+z)(u_2+z)$ and $G(z) = \frac{1}{v-z}$ covered in the above theorem correspond to $\bb$-bipartite maps and $\bb$-monotone Hurwitz numbers respectively~\cite{ChapuyDolega2022,BonzomChapuyDolega2023}.

\subsection*{Application to $\beta$-ensembles} As an application  of \cref{thm:intro1}, we prove in \cref{subsec:BetaEns} that refined topological recursion computes the correlators of three important models of $\beta$-ensembles. A $\beta$-ensemble is defined by a certain probability measure $d \mu^V_{N;\beta}$ on $\mathbb R^N$ which depends on a  function $V$ known as the potential.  We consider  three classical cases: 
\begin{itemize} 
	\item \textit{Gaussian $\beta$-ensemble} where $V(x) = \frac{x^2}{2}$;
	\item  \textit{Jacobi $\beta$-ensemble}  where $V(x) = \frac{1}{N}\left(\frac{2}{\beta}-c\right)\log(x)+\frac{1}{N}\left(\frac{2}{\beta}-d\right)\log(1-x)$ with $c,d>0$;
	\item and, \textit{Laguerre $\beta$-ensemble}   where $V(x) =
x+\frac{1}{N}\left(\frac{2}{\beta}-c\right)\log(x)$ with $c>0$;
\end{itemize} which correspond to the spectra of tridiagonal random matrices constructed in~\cite{DumitriuEdelman2002}.

In random matrix theory, one is interested in computing the connected correlators $\left\langle \sum_{k=1}^N\lambda_k^{k_1},\dots, \sum_{k=1}^N\lambda_k^{k_n}\right\rangle_{\mu_{N;\beta}^V}^\circ$ which we can encode in the following formal power series in $t$:
\begin{equation}\label{MMcorrelators}
	W^V_n(x_1,\dots,x_n) := \sum_{k_1,\dots,k_n \geq 1}\prod_{i=1}^n
	\frac{dx_i}{x_i^{k_i+1}}\left\langle \sum_{k=1}^N(t\lambda_k)^{k_1},\dots, \sum_{k=1}^N(t\lambda_k)^{k_n}\right\rangle_{\mu_{N;\beta}^V}^\circ.
\end{equation}
When the potential $V$ is chosen appropriately, various $\beta$-ensembles, including the three classical models above,  admit a  topological expansion  in $1/N$ as $N \to \infty$ \cite{BorotGuionnet2013}, whose expansion coefficients can be identified as weighted $\bb$-Hurwitz numbers \cite{Ruzza2023}. Thus, applying \cref{thm:intro1} (and  \cref{thm:RTRGbetaE} for  the Gaussian case), we prove  in \cref{thm:ensemble} that the topological expansion of the correlators $W^V_n$ can be computed by refined topological recursion. An abbreviated version of this statement reads as follows.
\begin{corollary}\label{cor:intro1}
	With the identification $\bb =   \sqrt{\frac{\beta}{2}}-\sqrt{\frac{2}{\beta}}$, the correlators $W^V_n$ of the Gaussian, Jacobi and Laguerre $\beta $-ensembles can be computed by refined topological recursion. More precisely,  for any $ n>1$, we have
	\[
		 \sqrt{\frac{\beta}{2}}^nW^V_n(x(z_1),\dots,x(z_n))=
		\sum_{g\in\frac12\mathbb{Z}_{\geq0}}\left(\sqrt{\frac{\beta}{2}}N\right)^{2-2g-n}\omega_{g,n}(z_1,\ldots,
		z_n) ,
	\] as a series expansion near $z_i = 0$, when $N\to \infty$. The  correlators  $\omega_{g,n}$  are computed by refined topological recursion on a refined spectral curve $\mathcal S_{\bm \mu}$ that depends on the $\beta$-ensemble considered (see  \cref{thm:ensemble} for the  definition of $\mathcal S_{\bm \mu}$ in each of the three cases). The above formula also holds for $n = 1$ up to an explicit correction for $g = \frac{1}{2}$. 
\end{corollary}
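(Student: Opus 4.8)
The plan is to deduce \cref{cor:intro1} from \cref{thm:intro1} using two results already in the literature. The first is the existence, established in \cite{BorotGuionnet2013}, of a $1/N$ topological expansion of the correlators $W^V_n$ defined in \eqref{MMcorrelators}; the second is the identification in \cite{Ruzza2023} of the coefficients of this expansion with $G$-weighted $\bb$-Hurwitz numbers for a rational weight $G$ determined by the potential $V$. Combining these, one obtains, order by order in $1/N$ and after matching the coefficient of each monomial $\prod_i dx_i/x_i^{k_i+1}$, an identity of the schematic form
\[
\sqrt{\tfrac{\beta}{2}}^{\,n}\,\Big\langle \textstyle\sum_k (t\lambda_k)^{k_1},\dots,\sum_k (t\lambda_k)^{k_n}\Big\rangle^\circ
\;=\;\sum_{g\in\frac12\Z_{\geq0}}\big(\sqrt{\tfrac\beta2}\,N\big)^{2-2g-n} F_{g,n}[k_1,\dots,k_n],
\]
where the $F_{g,n}$ are the expansion coefficients of $\tau^{(\bb)}_G$ from \eqref{tau in terms of Fgn}. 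Since \cref{thm:intro1} expresses precisely these $F_{g,n}$ as the expansion coefficients of the refined topological recursion correlators $\omega_{g,n}$ near $z_i=0$, substituting $x_i = x(z_i)$ and resumming over $g$ yields the asserted formula.

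The content of the first step is therefore to make the weight explicit for each ensemble and to check it lies in the list \eqref{eq:allowedweights}. I expect the Jacobi potential, with its two logarithmic terms, to correspond to the full weight $G(z) = \frac{(u_1+z)(u_2+z)}{v-z}$, and the Laguerre potential to $G(z) = \frac{u_1+z}{v-z}$; in each case one reads off $u_1,u_2,v$ and the formal parameter $t$ from the ensemble data $c,d,\beta,N$, and fixes the deformation parameter by $\alpha = 2/\beta$, which is exactly the identification $\bb = \sqrt{\alpha}^{-1}-\sqrt{\alpha} = \sqrt{\beta/2}-\sqrt{2/\beta}$ of the statement. The remaining manipulations are pure bookkeeping: setting $\hbar = (\sqrt{\beta/2}\,N)^{-1}$ turns the genus grading $\hbar^{2g-2+n}$ in \eqref{tau in terms of Fgn} into the factor $(\sqrt{\beta/2}\,N)^{2-2g-n}$, while the prefactor $\sqrt{\beta/2}^{\,n}$ absorbs the difference between the variables $\tilde p_\mu$ appearing in $\tau^{(\bb)}_G$ and the moments $\sum_k (t\lambda_k)^\mu$ of the measure. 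The pair $(g,n) = (\tfrac12,1)$ excluded in \cref{thm:intro1} is exactly the source of the $g=\tfrac12$ correction permitted for $n=1$ in \cref{cor:intro1}.

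The Gaussian ensemble must be treated on its own, which is why the statement invokes \cref{thm:RTRGbetaE} alongside \cref{thm:intro1}: the weight attached to $V(x)=\tfrac{x^2}{2}$ is not among those in \eqref{eq:allowedweights}, but arises as a degeneration of the Jacobi/Laguerre families. For it I would construct the refined spectral curve directly --- either by a limit of the Laguerre data or from the Gaussian $\bb$-Hurwitz weight --- and verify by an independent computation that refined topological recursion on that curve reproduces the $\omega_{g,n}$; this is the role of \cref{thm:RTRGbetaE}, and the full package of spectral curves for the three models is recorded in \cref{thm:ensemble}.

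The main obstacle I anticipate is not a single estimate but the faithful matching of the two spectral-curve descriptions together with the Gaussian case. Concretely, one must verify that the refined spectral curve $\mathcal S_{\bm \mu}$ produced by \cref{thm:intro1} from the combinatorial weight $G$, after the change of variables $x_i = x(z_i)$ and the parameter dictionary above, is the curve governing the $1/N$ expansion of the corresponding $\beta$-ensemble, keeping careful track of the prefactors $\sqrt{\beta/2}$ and $N$ across the two conventions. Isolating the genuinely new Gaussian computation from the limiting behaviour of the Jacobi and Laguerre families is where the remaining work lies.
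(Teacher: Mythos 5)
Your overall route is the same as the paper's proof of \cref{thm:ensemble}: quote the exact identification of the $1/N$ expansion of the correlators with $\bb$-Hurwitz generating functions (Okounkov for the Gaussian case, Ruzza for Jacobi and Laguerre), feed the resulting coefficients $F_{g,n}$ into \cref{thm:intro1}, and dispatch the Gaussian case by the separate \cref{thm:RTRGbetaE}. However, one of your concrete identifications is wrong: the Laguerre ensemble does \emph{not} correspond to the weight $G(z)=\frac{u_1+z}{v-z}$. The identification proved by Ruzza and used in the paper is $G(z)=(1+z)(\gamma+z)$ with $c=N(\gamma-1)+1$; the single logarithm in the Laguerre potential contributes one linear factor and the linear term $x$ of the potential the other, and there is no denominator. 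This is visible in the Laguerre spectral curve of \cref{thm:ensemble}, $x(z)=t\frac{(1+z)(\gamma+z)}{z}$, and it is why that case follows from \cref{thm:w/oother} (the polynomial-weight case $(u_1+z)(u_2+z)$ of \cref{thm:intro1}) rather than from the $\frac{u_1+z}{v-z}$ case. With your identification the comparison against Ruzza's expansion fails already at the level of $\omega_{0,1}$ (the two candidate spectral curves have different pole structure), so the matching step of your proof breaks as written. The repair is easy, since the correct weight is also on the list \eqref{eq:allowedweights}, but note that the explicit $(g,n)=(\frac12,1)$ correction also changes: $S=1$ rather than $S=2$ in \cref{thm:w/oother}.

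Second, your characterization of the Gaussian case is inaccurate in a way that matters for how the proof is actually completed. The G$\beta$E weight does not arise as a degeneration of the Jacobi/Laguerre families within the setting of \cref{thm:intro1}, and one should not expect to obtain \cref{thm:RTRGbetaE} as a limit of the Laguerre data. The Gaussian generating function is the specialization $\tilde q_i=\delta_{i,2}$ of the \emph{double} $\bb$-Hurwitz tau function with weight $G(z)=u+z$ (see \eqref{eq:TauGBE}), whereas everything in \cref{thm:intro1} lives at the single-Hurwitz specialization $\tilde q_i=\delta_{i,1}$. Consequently the constraints of \cref{theo:CDO} do not apply there; the paper instead invokes separate Virasoro constraints, derives new loop equations \eqref{formalRLEGUE} and a new spectral curve \eqref{eq:SpectralGbetaE}, and reruns the full analytic-continuation argument in \cref{sec:Appendix}. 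Since you do cite \cref{thm:RTRGbetaE}, your proof can be closed as stated, but the ``independent computation'' you defer to is precisely this nontrivial appendix, and the mechanism you propose for producing it is not the one that works.
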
 It is worth noting that the  formula of \cref{cor:intro1} holds as an equality of formal power series in $t$ such that for any fixed power of $t$, the LHS is an analytic function of $N$ whose expansion as $N \to \infty$ is given by the RHS. In particular, we emphasize that we do not need to treat the $\beta$-ensembles formally in $1/N$. Our theorem makes the recursive structure found in \cite{ChekhovEynard2006a} (in the formal setting) mathematically precise, and gives an explicit description of the analytic structure of the correlators using the formalism of refined topological recursion.

\subsection*{Inserting internal faces}

Our second main result is an extension of \cref{thm:intro1} to the case of $\bb$-Hurwitz numbers with internal faces. The terminology ``internal faces" stems from the interpretation of  $\tau^{(\bb)}_{G}$ as a generating function for  colored monotone Hurwitz maps, for  rational weight $G$ \cite{BonzomChapuyDolega2023, Ruzza2023}. The expansion coefficient $F_{g,n}[\mu_1,\ldots,\mu_n]$  is (up to an  automorphism factor) a weighted count of colored monotone Hurwitz maps on non-oriented surfaces of genus $g$ with $n$ faces of degrees $\mu_1,\cdots, \mu_n$. In this interpretation, the dependence on $\bb$ is through the factor $(-\sqrt{\alpha} \bb)^{\rho(\mathcal M)}$, where $\rho(\mathcal M)$ is the so-called measure of non-orientability of a map $\mathcal M$  -- see \cref{sec:combint} for  precise  statements.

We are interested in counting colored monotone Hurwitz maps of genus $g$ with $n$ marked boundary faces of degrees $\mu_1,\cdots, \mu_n $, where we allow the maps to also have arbitrarily many internal faces of degree at most $D \in \mathbb Z_{>0}$. We denote the appropriately weighted generating series of such maps, where an internal face of degree $i$ is weighted by $\sqrt{\alpha} \epsilon \tilde p_i$, by $F^D_{g,n}[\mu_1,\ldots,\mu_n;\epsilon]$, so that the exponent of the variable $\epsilon$ counts the number of internal faces. The equivalent algebraic definition is given by the following formula:
\begin{equation}\label{eq:FDgnintro}
		F^D_{g,n}[\mu_1,\ldots,\mu_n;\epsilon] := [\hbar^{2g-2+n}] \left( \prod_{i=1}^n \mu_i \frac{\partial}{\partial \tilde{p}_{\mu_i}} \cdot \log \tau^{(\bb)}_G \right) \Bigg|_{\substack{\tilde p_{ i} \mapsto \frac{\epsilon}{\hbar}p_{ i} \,  \forall \,  i  \leq D,\\ \tilde p_{ i} = 0\,  \forall \,  i >D}}.
\end{equation}
 Note in particular that these 	$F^D_{g,n}$ are also functions of $p_1,\ldots, p_D$ and $\epsilon$. When $\bb = 0$, the problem of counting weighted Hurwitz numbers with internal faces was considered in \cite{BychkovDuninBarkowskiKazarianShadrin2025, BonzomChapuyCharbonnierGarcia-Failde2024}, and was proved to be governed by topological recursion.

We prove that when $G$ is one of the  weights in \eqref{eq:allowedweights}, $G$-weighted $\bb$-Hurwitz numbers  with internal faces are computed by  refined topological recursion. The definition of the corresponding genus zero refined spectral curve, denoted $\mathcal S^D_{\bm \mu} = (\mathbb P^1, X(z),Y(z), \mathcal P_+, \{\mu_a\}_{a\in  \mathcal{P}_+}))$, is rather technical and can be found in \cref{def:refinedSCint} and \cref{sec:other weights}.

\begin{theorem}\label{thm:intro2}
		For any of the weights $G(z)$ given in \eqref{eq:allowedweights}, the refined topological recursion correlators $\omega_{g,n}$ on the corresponding refined spectral curve $\mathcal S^D_{\bm \mu}$ are generating functions for  $G$-weighted $\bb$-Hurwitz numbers with internal faces of degree at most $D$. More precisely, for any $(g,n) \in \frac{1}{2} \mathbb Z_{\geq 0} \times \mathbb Z_{\geq 1}$ except $(g,n) = (0,1),\;(\frac{1}{2},1)$,
	expanding the $\omega_{g,n}(z_1,\ldots,z_n)$ near the point $z_i = 0$ in the local coordinate $X(z_i)^{-1}$ gives 
	\[
	\omega_{g,n}(z_1, \ldots, z_n) = \sum_{\mu_1,\dots,\mu_n \geq 1} F^D_{g,n}[\mu_1,\ldots,\mu_n;\epsilon] \prod_{i=1}^n \frac{dX(z_i)}{X(z_i)^{\mu_i+1}}
	\] where the $F^D_{g,n}$ are defined in \eqref{eq:FDgnintro}.   The above formula also holds for $\omega_{0,1}$ and $\omega_{\frac{1}{2},1}$ up to certain explicit corrections (see \cref{thm:w/} and \cref{thm:w/other}).
\end{theorem}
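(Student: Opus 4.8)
The plan is to reduce Theorem~\ref{thm:intro2} to Theorem~\ref{thm:intro1} by realizing the internal-face deformation as a controlled perturbation of the spectral curve data, and then to verify that this perturbation is exactly the one implemented by the refined topological recursion. Concretely, the specialization $\tilde p_i \mapsto \tfrac{\epsilon}{\hbar} p_i$ for $i \le D$ (and $\tilde p_i = 0$ for $i > D$) in \eqref{eq:FDgnintro} shifts the ``times'' of the $\bb$-Hurwitz tau function by a finite number of nonzero values weighted by $\epsilon$. At the level of the correlators, turning on these times corresponds to deforming the disk correlator $\omega_{0,1} = y\,dx$ (equivalently the spectral curve functions $X,Y$) while leaving the recursion kernel structurally unchanged. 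I would first make this precise: starting from the factorized form \eqref{tau in terms of Fgn} of $\log \tau^{(\bb)}_G$, I would show that the generating function $F^D_{g,n}$ is obtained from the unspecialized $F_{g,n}$ by the standard ``dilaton/string''-type shift of the $\omega_{0,1}$ data, so that $\mathcal S^D_{\bm\mu}$ is precisely the spectral curve $\mathcal S_{\bm\mu}$ of Theorem~\ref{thm:intro1} with $(x,y)$ replaced by the deformed $(X,Y)$ recorded in \cref{def:refinedSCint}.

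Next I would establish the analytic-continuation/expansion statement at the unstable level, i.e.\ for $\omega_{0,1}$ and $\omega_{0,2}$. The key point is that the generating series $\sum_{\mu\ge 1} F^D_{0,1}[\mu;\epsilon]\, X^{-\mu-1} dX$ must coincide with $Y\,dX$ for the new curve, which amounts to checking that the genus-zero one-point function with internal faces is rationally parametrized by $z$ through $X(z),Y(z)$. I would verify this by a direct saddle-point/Lagrange-inversion computation on the $\epsilon$-shifted tau function, exactly as in the $\bb=0$ analysis of \cite{BychkovDuninBarkowskiKazarianShadrin2022, BonzomChapuyCharbonnierGarcia-Failde2024}, but tracking the $\bb$-dependence; this also pins down the correction terms for $\omega_{0,1}$ and $\omega_{\frac12,1}$ asserted in \cref{thm:w/} and \cref{thm:w/other}. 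With $\omega_{0,2}$ fixed to the Bergman kernel of $\mathbb P^1$ (plus its refined $\bb$-corrections along the anti-diagonal $z_i = \sigma(z_j)$), the unstable inputs of refined topological recursion now match the internal-face generating functions.

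The stable cases $(2g-2+n)>0$ then follow by induction on $2g-2+n$, paralleling the proof of Theorem~\ref{thm:intro1}. The inductive mechanism is that the $\bb$-Hurwitz correlators $F^D_{g,n}$ satisfy the same loop equations / linear-and-quadratic constraints as the refined correlators $\omega_{g,n}$, with the internal-face weights entering only through the already-matched $\omega_{0,1}$ and the ramification data; since refined topological recursion is the unique solution to these constraints with the prescribed pole structure (no poles except at the ramification points of $X$ and along the anti-diagonals, with the degree-in-$\bb$ bound $\le 2g$), the two families of differentials must agree. I would implement this either by deriving the Virasoro/cut-and-join constraints for the $\epsilon$-deformed tau function and comparing them termwise to the refined recursion kernel, or by invoking the reconstruction of the correlators from $\omega_{0,1},\omega_{0,2}$ together with the local behavior at the zeroes of $dX$.

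The main obstacle I anticipate is controlling the pole structure of the deformed correlators at the auxiliary points of $\mathcal P$ introduced by the internal faces. Unlike the no-internal-face case, turning on finitely many times moves and splits the ramification locus of $X$, so the recursion kernel develops new contributions and the correlators acquire poles at the extra points of $\mathcal P_+$ (as flagged in the introduction). Verifying that these match on both sides --- in particular that the $\epsilon$-dependent shift in $Y\,dX$ produces exactly the ramification structure encoded in $\mathcal S^D_{\bm\mu}$, and that the refined $\bb$-corrections do not generate spurious poles --- is the technically delicate step, and is presumably where \cref{def:refinedSCint} and \cref{sec:other weights} do the heavy lifting. The excluded case $(g,n)=(0,1)$ (absent from Theorem~\ref{thm:intro1}'s exclusions) signals precisely that the disk amplitude is where the deformation must be installed by hand rather than produced by the recursion.
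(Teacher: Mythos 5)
Your proposal diverges from the paper's actual proof in a way that leaves a genuine gap. The paper does \emph{not} derive loop equations for the $\epsilon$-deformed tau function and then invoke a uniqueness argument; it only uses the deformed loop equation once, for $(g,n)=(0,1)$ (\cref{lem:Y^2}), to pin down the algebraic form of the deformed curve. The engine of the paper's proof is entirely different: it shows that the refined spectral curve $\mathcal S^D_{\bm\mu}$ of \cref{def:refinedSCint} satisfies the \emph{refined deformation condition} with respect to $\epsilon$, with contour around $z=0$ and $\Lambda=-V(X(z))$ (\cref{prop:deform}, whose technical core is the $\omega_{\frac12,1}$ computation of \cref{lem:1/21}), and then applies the refined variational formula (\cref{thm:variation}) iteratively: each $\partial_\epsilon$ of the expansion coefficients of $\omega^D_{g,n}$ inserts one residue of $V(X)\,\omega^D_{g,n+1}$, so the $m$-th Taylor coefficient in $\epsilon$ at $\epsilon=0$ matches the multi-residue definition \eqref{eq:FgnD} by the $\epsilon\to0$ limit (\cref{lem:limit}) together with \cref{thm:w/o}. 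This route never needs to control loop equations with the potential turned on beyond $(0,1)$.

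The step where your argument would break is the stable-case induction. You assert that refined topological recursion is ``the unique solution to these constraints with the prescribed pole structure (no poles except at the ramification points of $X$ and along the anti-diagonals)''. This is false in the refined setting: the stable correlators have poles at $\mathcal R\cup\sigma(\mathcal P_+)\cup\sigma(z_{[n]})$, and --- more importantly --- the correlators are \emph{not} determined by loop equations plus pole locations alone, because they depend on the parameters $\bm\mu$ (through $\omega_{\frac12,1}$ and the $\bb\,d_z$ term, which makes the refined loop equations first-order rather than algebraic). The theorem in fact requires the specific choices $\mu_0=-1-D$, $\mu_{a_i}=0$, $\mu_{b_i}=1$, and your characterization cannot see this data, so the induction cannot close. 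Separately, the deformed loop equations contain projection terms such as $\epsilon\left[(t+X)V'(X)Y(X)\right]_{\leq-2}$ in \eqref{LE:(0,1) with}, coming from the nonzero times; already at $(0,1)$ the paper only extracts qualitative information from them by degree counting, and for higher $(g,n)$ there is no refined analogue of the machinery (e.g.\ a deformed version of \cref{lem:kernel} and of the matching with \eqref{eq:omegagn}) that would let you compare such equations ``termwise to the refined recursion kernel''. These two issues --- the missing $\bm\mu$-dependence in your uniqueness claim and the unhandled projection terms --- are precisely what the paper's variational-formula strategy is designed to avoid.
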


When the  weight  is $G(z) = (u_1+z)(u_2+z)$, \cite{ChapuyDolega2022} proves that the coefficients $F_{g,n}^D[\mu_1,\ldots,\mu_n;\epsilon]$ are the $\bb$-weighted generating series of bipartite maps on a non-oriented surface of genus $g$ with $n$ boundary faces of degrees $\mu_1,\cdots, \mu_n$ and arbitrarily many internal faces of degree at most $D$.  Another important case in which a  result analogous to \cref{thm:intro2} holds is the enumeration of (not necessarily bipartite) maps with internal faces, which we  address in \cref{sec:Appendix}. We  extend the previously known results on rational parametrization of generating series of maps and bipartite maps to arbitrary topologies and weighted internal faces of arbitrary bounded degrees. The precise statements for bipartite maps and maps can be found in  \cref{theo:GenSerMaps}.

\subsection*{Context and future work}
To prove  \cref{thm:intro1}, we use   Virasoro-type constraints found in \cite{ChidambaramDolegaOsuga2024} that uniquely determine the tau function $\tau^{(\bb)}_G$. We turn these constraints into formal loop equations for the generating functions  of the $ F_{g,n}$  and prove that the refined topological recursion correlators $\omega_{g,n} $ uniquely solve these loop equations using the results of \cite{KidwaiOsuga2023}. Our result extends the result of \cite{AlexandrovChapuyEynardHarnad2018, BychkovDuninBarkowskiKazarianShadrin2024} to arbitrary $\bb$ for the restricted choice of  weights $G(z)$ in equation \eqref{eq:allowedweights}. However, the results of \textit{loc. cit.} rely heavily on KP integrability, which is unknown in the $\bb$-deformed setting and hence we cannot use their methods to prove our result.

The strategy of our proof applies to any rational weight $G(z)$, and indeed can be used to show that  the generating function of rationally weighted $\bb$-Hurwitz numbers analytically continues to a rational curve $\Sigma = \mathbb P^1$ (extending the result of  \cite{BorotChidambaramUmer2025} to arbitrary $\bb$). The only obstruction is that the refined topological recursion formalism is yet to be defined when the associated branched covering $x : \Sigma \to \mathbb P^1$ is of degree greater than $2$. We will return  to this question in the future.

The key idea in extending \cref{thm:intro1} to \cref{thm:intro2} for $\bb$-Hurwitz numbers with internal faces is that inserting internal faces in the combinatorial models can be interpreted as the application of the variational formula in topological recursion. This technique was employed by \cite{BonzomChapuyCharbonnierGarcia-Failde2024} in the $\bb=0$ setting, and we use the refined variational formula proved in \cite{Osuga2024} to extend the result to arbitrary $\bb$. 

In his list of unsolved problems in map enumeration~\cite{Bender1991}, Bender posed a question regarding the origin of a universal pattern that has been discovered and proven for various models of maps over the years. This universal pattern matches physics predictions from the so-called double scaling limits in quantum gravity. Eynard explained in~\cite[Chapter 5]{Eynard2016} how the structural properties of the topological recursion correlators can be used to prove these predictions. In particular, the recent results in the $\bb=0$ setting on topological recursion applied to weighted Hurwitz numbers with internal faces provide a general answer to Bender's question in the case of orientable surfaces. However, in the non-orientable case, this asymptotic pattern is proved only for some special models, and remains conjectural in general. We plan to use refined topological recursion to tackle this problem in future work.

\subsection*{Acknowledgments}
We would like to thank Guillaume Chapuy  and Elba Garcia-Failde for many helpful discussions.

\section{$\bb$-Hurwitz numbers}\label{sec:b-Hurwitz}

\subsection{Jack polynomials}

Let $J^{(\alpha)}_\lambda$ be the \emph{integral version of the Jack symmetric
function} indexed by an integer partition $\lambda = (\lambda_1 \geq
\dotsm \geq \lambda_\ell)$ and a parameter $\alpha$. There are many equivalent definitions
of these symmetric functions, and the standard references
are~\cite{Stanley1989,Macdonald1995}. We will follow the presentation
from~\cite[Section 2.1]{ChidambaramDolegaOsuga2024} that is well suited for our purposes. After identifying the normalized power sum symmetric
functions
\[\sqrt{\alpha}^{-1}\sum_{j \geq 1}x_j^i\]
with the formal variable
$\tilde{p}_i$, we can characterize $J^{(\alpha)}_\lambda$ as
eigenfunctions of a certain differential operator acting on
the graded polynomial algebra $\Q(\sqrt{\alpha})[\tilde{\pp}] := \Q(\sqrt{\alpha})[\tilde{p}_1, \tilde{p}_2,\dotsc]$ with the
grading $\deg(\tilde{p}_i) := i$. Let
\begin{equation*}
  \label{eq:hrep}
    \begin{split}
      \mathsf J_{k} = \left\{
        \begin{array}{lr}
          \hbar k \partial_{\tilde p_k}& k \geq 0,\\
          0 &  k = 0,\\
          \hbar \tilde p_{-k} &  k <0. 
        \end{array}
        \right.
    \end{split}
  \end{equation*}
be a representation of the Heisenberg algebra acting on
$\Q(\sqrt{\alpha})[\tilde{\pp}]$. Then, the Jack
 symmetric functions $J^{(\alpha)}_\lambda$ are the unique homogeneous
 elements of $\Q(\sqrt{\alpha})[\tilde{\pp}]$ of degree
 $|\lambda|:=\lambda_1 +\dotsm+\lambda_\ell$ such that:
 \begin{itemize}
 \item[\rm \textbf{J1:}]
   $ J^{(\alpha)}_\lambda$ is an eigenfunction of the
     Laplace--Beltrami operator
     \begin{equation*}
  \label{eq:Laplace--BeltramiDef}
  \tilde{D}_\alpha := \frac{\hbar^{-3}}{2}\sum_{k\geq1}\J_{-k}\left(\sum_{\ell\geq1}\J_{\ell}  \J_{k-\ell}
   -(k-1)\hbar\cdot\bb\cdot\J_{k}\right), \quad \bb := \sqrt{\alpha}^{-1}-\sqrt{\alpha},
\end{equation*}
with  eigenvalue equal to
\[ \sum_{(x,y) \in \lambda}\tilde{c}_\alpha (x,y),\quad
  \text{where}\quad \tilde{c}_\alpha (x,y) := \sqrt{\alpha}(x-1)-\sqrt{\alpha}^{-1}(y-1);\]
  where $(x,y)\in\lambda$ (which we occasionally write as $\square\in\lambda$ for short) denotes the box in column $x$ and row $y$ of the Young diagram of $\lambda$.
\item[\rm \textbf{J2:}]
  the transition matrix from $\left\{J^{\al}_\lambda\right\}_{|\lambda|=n}$
    to the monomial symmetric functions $\left\{m_\lambda\right\}_{|\lambda|=n}$
    is lower triangular;
\item[\rm \textbf{J3:}]
    $J^{(\alpha)}_\lambda$ is normalized such that
$[\tilde{p}_1^{|\lambda|}]J^{(\alpha)}_\lambda$ is equal to $\sqrt{\alpha}^{|\lambda|}$; with this normalization, we denote $j_\lambda^{(\alpha)} := \langle J^{\al}_\lambda,J^{\al}_\lambda \rangle$, where $\langle \tilde{p}_\lambda,\tilde{p}_\mu \rangle := \delta_{\lambda,\mu}\frac{|\lambda|!}{\Aut(\lambda)}$.
\end{itemize}

\begin{remark}
  The use of $\tilde{p}_i$ instead of usual (non-normalized) power sum symmetric
functions $p_i = \sqrt{\alpha} \tilde p_i$, as well as using the parameter
$\bb$ instead of $b:=\alpha-1$ is not standard. However, we will see later that this
normalization is more natural in the context of refined
topological recursion.
\end{remark}

\subsection{$\bb$-Hurwitz numbers: combinatorial interpretation}\label{sec:combint}

Let $G(z) = \sum_{i=0}^\infty g_i z^i$ be an invertible formal power series. We
define the tau function of $G$-weighted $\bb$-deformed single Hurwitz
numbers as the following formal power series:
\begin{multline}
  \label{eq:TauFunction}
    \tau_G^{{(\bb)}}=\sum_{d \geq 0} \left(\frac{t \cdot \sqrt{\alpha}}{\hbar} \right)^d\sum_{\lambda \vdash d} 
		\frac{J_\lambda^{(\alpha)}(\sqrt{\alpha}\tilde{\pp})}{j_\lambda^{(\alpha)}}\prod_{\square
                  \in \lambda}G(\hbar\cdot
  \tilde{c}_{\alpha}(\square))= \\
  \exp\left(\sum_{g\in\frac12\mathbb{Z}_{\geq0}}\sum_{n\in\mathbb{Z}_{\geq1}}\sum_{\mu_1,\ldots,\mu_n\in\mathbb{Z}_{\geq1}}\frac{\hbar^{2g-2+n}}{n!}\;
  F^G_{g,n} [\mu_1,\ldots,\mu_n] \;\frac{\tilde p_{\mu_1}}{\mu_1}\cdots
  \frac{\tilde p_{\mu_n}}{\mu_n}\right).
\end{multline}

This function was introduced in~\cite{ChapuyDolega2022} in the more
general case of $\bb$-deformed double Hurwitz
numbers, where the topological/combinatorial interpretation of $F^G_{g,n}$ as the
generating series of weighted real (non-oriented) branched coverings of genus
$g$ with $n$ boundaries was
proved. In the special case $\bb=0$ it reduces to the tau function of the 2D
Toda hierarchy corresponding to the generating series of $G$-weighted
double Hurwitz numbers studied in~\cite{Guay-PaquetHarnad2017}. In the
case of rationally weighted single $\bb$-Hurwitz numbers the partition function
$\tau^{\bb}_{G}$ is the generating function of colored monotone Hurwitz numbers introduced
in~\cite{BonzomChapuyDolega2023} for the monotone case and extended
in~\cite{Ruzza2023} to the general case\footnote{In case of a
  polynomial weight, there is a different model of non-orientable
  constellations introduced in~\cite{ChapuyDolega2022} that we will
  discuss in \cref{sec:applications}}.

\begin{definition}[Colored monotone Hurwitz map]
	A \emph{monotone Hurwitz map} $\mathcal{M}$ of genus $g$ with $v(\M) = d$
        vertices and $e(\M) = r$
        edges is a 2-cell embedding of a loopless labeled multigraph
        on a compact surface (orientable or not) of genus $g$, with the following properties: 
\begin{itemize}
\item[\rm \textbf{HM1:}] the vertices of the map are labeled from $1$
  to $d$, and the neighborhood of each vertex is equipped with an
  orientation, which defines a local orientation at every vertex. Moreover each vertex has a distinguished corner called \textbf{active};
\item[\rm \textbf{HM2:}] the edges of the map are labeled from $e_1$
  to $e_r$. We let $\mathcal{M}_i$ be the submap of $\mathcal{M}$ induced by edges $e_1,e_2,\dots,e_i$.
\item[\rm \textbf{HM3:}] for each $i$ in $\{1,\dotsc,r\}$, let
  $a_i<b_i$ be the two vertices incident to the edge $e_i$. Then $b_1
  \leq \dotsm \leq b_r$;
  \item[\rm \textbf{HM4:}] in
  the map $\mathcal{M}_{i}$, the following conditions (with respect to the local
  orientations inherited from $\mathcal{M}$) must be met: 
\begin{itemize}
\item[\rm \textbf{(i):}] the active corner at $b_i$ immediately follows the edge $e_i$;
		\item[\rm \textbf{(ii):}] the active corner at $a_i$ is opposite
                  (with respect to $e_i$) to the active corner at $b_i$;
		\item[\rm \textbf{(iii):}] if the edge $e_i$ is
                  disconnecting in $\mathcal{M}_i$, then the local orientations at $a_i$ and $b_i$ are compatible in $\mathcal{M}_i$ (i.e. they can be jointly extended to a neighborhood of $e_i$).
	\end{itemize}
\end{itemize}
The \textbf{degree} of a face is its number of active corners. These
face degrees form a partition of $d$ called the \emph{degree profile}
of $\mathcal{M}$. Note that the genus $g$\footnote{We use the convention that relates genus uniformly to the Euler characteristic: in the orientable case $g$ is equal to the number of tori attached to the sphere (hence $g\in\mathbb{Z}_{\geq0}$), while in the non-orientable case $2g$ is equal to the number of crosscaps attached to the sphere (hence $g\in\frac12\mathbb{Z}_{\geq0}$).} of a monotone Hurwitz map $\mathcal{M}$ with degree profile $(k_1,\dots,k_n)$ is given by the Riemann--Hurwitz formula
\[ \chi(\mathcal{M})=d-r+n = 2-2g,\]
where $\chi(\mathcal{M})$ denotes the Euler characteristic of a monotone Hurwitz map $\mathcal{M}$. 
A \emph{colored monotone Hurwitz map} $(\mathcal M, c)$ is a monotone Hurwitz map $\mathcal M$
equipped with an $(M|N)$-coloring $c\colon \{1,\dotsc,r\}
\longrightarrow \{1,\dotsc,M+N\}$, where $M,N \geq 0$, such that
\begin{itemize}
\item[$\bullet$] if $1 \leq i < j \leq r$ and $c(i) = c(j) \leq N$ then
  $b_i < b_j$;
  \item[$\bullet$] if $1 \leq i < j \leq r$ and $b_i = b_j$ then
    $c(i) \leq c(j)$.
	\end{itemize}
      \end{definition}

\begin{example}
Consider the monotone Hurwitz map depicted in \cref{fig:MonotoneColoredMap}. In the middle we represent it as an embedding into the Klein bottle, i.e. the non-orientable surface of genus $1$. We realize the Klein bottle by identifying the opposite edges of a rectangle, so that the identification is consistent with the black arrows. The map has $d=3$ vertices and their labels and local orientations are depicted in blue. It has $r=5$ edges and the active corners are indicated by the small red arrows. The green and yellow regions corresponds to the faces so that the degree profile is $(2,1)$. The same monotone Hurwitz map is represented on the right as a ribbon graph, which is a small open neighborhood of the graph embedded in the surface. Note that the choice of which ribbons are ``twisted" in this graphical representation is not unique. There is a unique $(0|1)$-coloring given by $c(i) \equiv 1$. There are $4$ possible $(1|1)$-colorings: $c(1) = c(3) = c(4) = 1$, $c(2),c(5) \in \{1,2\}$. However, there are no $(1|0)$ colorings, because $b_1 = b_2 = 2$, and the condition $c(1) < c(2)$ is violated.
\end{example}

\begin{figure}
\centering
\includegraphics[width=\textwidth]{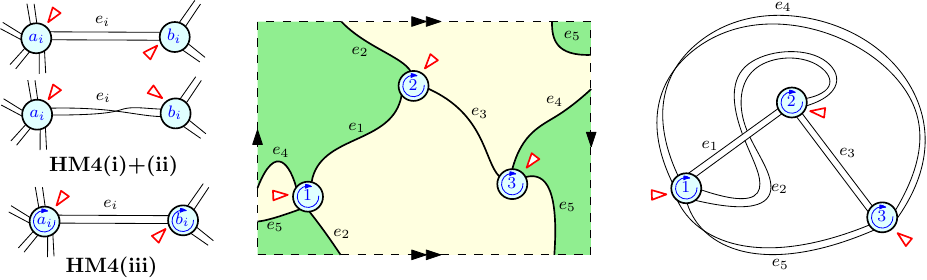}
\caption{On the left hand side we show a graphical representation of the conditions {\rm \textbf{HM4:(i)+(ii)}} and {\rm \textbf{HM4:(iii)}} for a monotone Hurwitz map represented as a ribbon graph. In the middle we represent a monotone Hurwitz map embedded into the Klein bottle. On the right hand side we show the same map as a ribbon graph.}
\label{fig:MonotoneColoredMap}
\end{figure} 

      In the following we  collect the colored monotone
      Hurwitz maps by the generating series that will additionally
      ``measure their non-orientability'' by the following algorithm.

      \begin{definition}
        \label{def:MON}
        Let $\mathcal{M}$ be a monotone Hurwitz map. We iteratively
        remove either the vertex of maximum label (if it is isolated) or
        the edge of maximal label. Doing so we also collect a
        weight $1$ or $-\sqrt{\alpha}\bb$ whenever we remove an edge $e$,
        by the following rules. Let $\mathcal{M}'$ be the map obtained
        from $\mathcal{M}$ by removing $e$. Then:
        \begin{itemize}
          \item[(i)] if $e$ joins two active corners in the same face of $\mathcal{M}'$, the weight is 1 if $e$ splits the face into
            two, and $-\sqrt{\alpha}\bb$ otherwise;
                      \item[(ii)] if $e$ joins two active corners
                        in different faces of $\mathcal{M}'$, the
                        weight is $1$ if the local orientations of the
                        active corners are compatible, and $-\sqrt{\alpha}\bb$ otherwise.
                      \end{itemize}
                      The product of all the collected weights is necessarily of the form
                      $(-\sqrt{\alpha}\bb)^{\rho(\mathcal{M})}$ for
                      some non-negative integer $\rho(\mathcal{M})$,
                      which is called the \textbf{measure of non-orientability}
                      of $\mathcal{M}$.
\end{definition}

\begin{example}
We continue the example from \cref{fig:MonotoneColoredMap}. By removing the edge $e_5$ the associated active corners belong to the same face of $\mathcal{M}_4$, and $\mathcal{M}_4$ has one face less than $\mathcal{M}_5$, so the associated weight is $1$. Passing from $\mathcal{M}_4$ to $\mathcal{M}_3$, the active corners belong to different faces of $\mathcal{M}_3$, so by comparing the local orientation of $1$ and $3$ joined by $e_4$, we see that it is not consistent, and the associated weight is $-\sqrt{\alpha}\bb$. By removing $e_3$ we split the map into two connected components, and in this case the weight is always $1$ as a consequence of {\rm \textbf{HM4:(iii)}}. Finally, by removing $e_2$ we are in case (i) which assigns the weight $1$, and at the end we end up with a single edge whose removal always gives the weight $1$ as it disconnects the map. The total weight of the map is $(-\sqrt{\alpha}\bb)$.
  \end{example}

\begin{theorem}{\cite{BonzomChapuyDolega2023,Ruzza2023}}
  \label{theo:Combi}
Let $G(z) =
\frac{\prod_{i=1}^N(1+u_iz)}{\prod_{i=N+1}^{M+N}(1-u_iz)}$. Then
\begin{equation}
    \frac{\alpha^g|\mu|!}{|\Aut(\mu)|\prod_{i=1}^n\mu_i}F_{g,n}[\mu_1,\ldots,\mu_n]=t^{|\mu|}\sum_{(\mathcal{M},c)}(-\sqrt{\alpha}\bb)^{\rho(\mathcal{M})}u_{c(1)}\dotsm u_{c(r)},\label{F-H}
  \end{equation}
  where we sum over all colored monotone Hurwitz maps of genus $g$ with the degree
  profile $(\mu_1,\dots,\mu_n)$.
\end{theorem}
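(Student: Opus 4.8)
The plan is to expand the content weight $\prod_{\square\in\lambda}G(\hbar\tilde c_\alpha(\square))$ multiplicatively and to recognise each elementary factor as a symmetric function of the deformed contents, which in turn is the eigenvalue on $J^\al_\lambda$ of an explicit operator extending the Laplace--Beltrami operator $\tilde D_\alpha$. Concretely, for $G(z)=\prod_{i=1}^N(1+u_iz)\big/\prod_{i=N+1}^{M+N}(1-u_iz)$ one has
\begin{equation*}
\prod_{\square\in\lambda}G(\hbar\tilde c_\alpha(\square))=\prod_{i=1}^N\Big(\sum_{k\geq0}(u_i\hbar)^k e_k\big(\tilde c_\alpha(\lambda)\big)\Big)\prod_{i=N+1}^{M+N}\Big(\sum_{k\geq0}(u_i\hbar)^k h_k\big(\tilde c_\alpha(\lambda)\big)\Big),
\end{equation*}
where $e_k,h_k$ are the elementary and complete homogeneous symmetric functions of the multiset of contents $\{\tilde c_\alpha(\square)\}_{\square\in\lambda}$. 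Since $\tilde D_\alpha$ already has eigenvalue $e_1=p_1$ of the contents on $J^\al_\lambda$, the first step is to produce the commuting family realising all $e_k$ and $h_k$ as eigenvalues; such operators are furnished by the symmetries of $\tau^{(\bb)}_G$.

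Second, I would pass to the power-sum expansion: writing $J^\al_\lambda(\sqrt\alpha\tilde\pp)$ in the $\tilde\pp$-basis introduces the $\bb$-deformed Jack characters, so the coefficient of a fixed monomial $\prod_i \tilde p_{\mu_i}/\mu_i$ in $\tau^{(\bb)}_G$ becomes a sum over $\lambda$ of a Jack character weighted by the content symmetric functions above. The essential input is that such sums admit a dual, combinatorial description: a symmetric function of the contents acts as a central element in the relevant deformation of the group algebra, and its expansion in the conjugacy-class basis is a weighted enumeration of maps in which $\bb=\sqrt\alpha^{-1}-\sqrt\alpha$ records precisely the measure of non-orientability $\nu(\M)$ through the peeling algorithm of \cref{def:MON}. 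Under this dictionary, choosing the power $(u_i\hbar)^k$ from a given factor of $G$ contributes $k$ edges carrying the colour $i$, inserted one at a time.

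Third, I would match the combinatorial constraints. The product ordering of the content operators forces the larger endpoints of successive edge insertions to be weakly increasing, which is exactly \textbf{HM3}; the active-corner rules \textbf{HM4} dictate how each new edge attaches relative to the current local orientation, and it is the resulting splitting/merging dichotomy that converts the Jack (as opposed to Schur) structure constants into the weights $1$ and $-\sqrt\alpha\bb$ of \cref{def:MON}. The $(M|N)$-colouring appears because the $N$ numerator factors are \emph{fermionic} --- the $e_k$ force strictly increasing insertion times, i.e. $c(i)=c(j)\leq N\Rightarrow b_i<b_j$ --- whereas the $M$ denominator factors are \emph{bosonic} --- the $h_k$ allow equal times ordered by colour, i.e. $b_i=b_j\Rightarrow c(i)\leq c(j)$. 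Thus a colouring records which factor of $G$ supplied each edge, and summing the monomials reproduces $u_{c(1)}\cdots u_{c(r)}$.

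Finally, taking $\log\tau^{(\bb)}_G$ selects the \emph{connected} maps counted by $F_{g,n}$, and the prefactor $\alpha^g|\mu|!/(|\Aut(\mu)|\prod_i\mu_i)$ is bookkeeping: the factor $\alpha^g$ collects the residual power of $\sqrt\alpha$ from the normalisation $J^\al_\lambda(\sqrt\alpha\tilde\pp)/j^\al_\lambda$ together with the Euler characteristic, while $|\mu|!/(|\Aut(\mu)|\prod_i\mu_i)$ converts the symmetric monomial $\prod_i\tilde p_{\mu_i}/\mu_i$ into a count of maps with labelled vertices and marked faces; the grading by $\hbar^{2g-2+n}$ matches by the Riemann--Hurwitz relation $d-r+n=2-2g$. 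I expect the genuinely hard step to be the second one: proving that these Jack structure constants are polynomials in $\bb$ admitting the stated non-orientability interpretation (the ``$b$-positivity'' phenomenon) requires a delicate analysis of the Jack--Jucys--Murphy action rather than any formal manipulation, and is precisely the content of \cite{BonzomChapuyDolega2023,Ruzza2023}.
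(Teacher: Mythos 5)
The first thing to say is that the paper contains no proof of this statement: it is imported wholesale from the literature, which is why the theorem environment carries the citations \cite{BonzomChapuyDolega2023,Ruzza2023}. So the comparison can only be between your outline and the proofs in those cited works, and judged as an independent proof your proposal has a genuine gap at its central step. Your first step (expanding $\prod_{\square\in\lambda}G(\hbar\tilde c_\alpha(\square))$ into $e_k$'s and $h_k$'s of the multiset of deformed contents), your identification of the fermionic/bosonic origin of the two colouring constraints, the exponential-to-logarithm passage to connected objects, and the prefactor and Riemann--Hurwitz bookkeeping are all correct and do reflect the periphery of the actual proofs. The problem is step two.

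You propose to realize a symmetric function of the contents as ``a central element in the relevant deformation of the group algebra'' and to read the map enumeration off its expansion ``in the conjugacy-class basis.'' For $\bb=0$ this is exactly the classical argument: class algebra of $\Sym{d}$, Jucys--Murphy elements, Schur characters. But for general $\alpha$ no such algebra, no such central elements, and no conjugacy-class basis are known to exist --- Jack characters are not characters of any known algebra, and this absence is precisely what made the Goulden--Jackson $b$-conjecture circle of problems hard. Consequently the ``dictionary'' you invoke cannot be set up as stated, and the weights $1$ versus $-\sqrt{\alpha}\bb$ of \cref{def:MON} cannot be extracted as structure constants of anything. The cited proofs work around this by a different mechanism: they use the content operators (Laplace--Beltrami/Pieri-rule manipulations at the level of Jack polynomials) to derive evolution equations uniquely characterizing $\tau^{(\bb)}_G$, then construct the combinatorial model of coloured monotone Hurwitz maps so that its generating series satisfies the same equations with the same initial data, with the non-orientability weight arising from a surgery case-analysis of edge insertion on non-oriented surfaces (in the style of La Croix), not from any algebra. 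Your closing admission that the hard step ``is precisely the content of \cite{BonzomChapuyDolega2023,Ruzza2023}'' makes the proposal circular as a proof of the theorem those papers establish: what remains, once that step is outsourced, is bookkeeping.
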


\subsection{$\bb$-Hurwitz numbers with internal faces}
\label{subsec:InternalFaces}

In the following we would like to compute a variant of the  $\bb$-Hurwitz generating function considered in the previous section, where some of the faces will be treated as marked boundaries.

\begin{definition}
  \label{def:HurwitzWithInternal}
Let $\mathcal{M}$ be a monotone Hurwitz map with the degree profile
given by a permutation of the sequence
$(k_1,\dots,k_n,k_{n+1},\dots,k_{n+m})$. We say that
$\tilde{\mathcal{M}}$ is a monotone Hurwitz map with $n$ boundaries
and $m$ internal faces if $\tilde{\mathcal{M}}$ is obtained from
$\mathcal{M}$ by additionally marking $n$ active corners $c_1,\dots,c_n$ in
different faces of $\mathcal{M}$. We define the \emph{boundary degree
profile} $\deg_B(\tilde{\mathcal{M}}) := [k_1,\dots,k_n]$ where the corner
$c_i$ belongs to a face of degree $k_{i}$, and the \emph{internal degree
profile} 
$\deg_I(\tilde{\mathcal{M}}) := (k_{n+1},\dots,k_{n+m})$. The \textit{weight} of $\tilde{\mathcal{M}}$ is defined by assigning the weight 
$\epsilon t^i p_i$ to each internal face of degree $i$, i.e.,
\[ \wt(\tM) := \epsilon^m\prod_{i=n+1}^{n+m}t^{k_i} p_{k_i}.\]
\end{definition}

\begin{proposition}
  \label{prop:HurwitzWithBoundaries}
Let $D,n,k_1,\dots,k_n>0$. The following quantity is the weighted
generating function of colored monotone Hurwitz
maps with  boundary degree
$\deg_B(\tilde{\mathcal{M}}) := [k_1,\dots,k_n]$ and internal faces
of degree at most $D$:
\begin{align}
F_{g,n}^{D}\left[k_1,\ldots,k_n;\epsilon\right] :&= \sum_{m \geq
  0}\frac{\epsilon^m}{m!}\sum_{k_{n+1},\dots,k_{n+m}=1}^DF_{g,n}[k_1,\dots,k_n,k_{n+1},\dots,k_{n+m}]\prod_{i=n+1}^{m+n}\frac{p_{k_i}}{k_i}\nonumber\\
&= \alpha^{-g}t^{k_1+\dotsm+k_n}\sum_{(\tilde{\mathcal{M}},c)}\frac{\wt(\tM)}{v(\tilde{\mathcal{M}})!}\,(-\sqrt{\alpha}\bb)^{\rho(\tM)}\,u_{c(1)}\dotsm u_{c(r)}  \label{eq:HurwitzWithBoundaries}
\end{align}
where we sum over all colored monotone Hurwitz maps of genus $g$ with  boundary degree
  profile $[k_1,\dots,k_n]$ and internal faces of degree at most $D$.
\end{proposition}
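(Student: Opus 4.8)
The plan is to regard the first displayed equality as a restatement of the definition \eqref{eq:FDgnintro}, and to put all the real work into the second (combinatorial) equality. The first line follows from \eqref{eq:FDgnintro} by a direct computation: applying $\prod_{i=1}^n k_i\partial_{\tilde p_{k_i}}$ to $\log\tau^{(\bb)}_G$ and using the expansion \eqref{eq:TauFunction}, each operator $k_i\partial_{\tilde p_{k_i}}$ removes one factor $\tilde p_{k_i}/k_i$ (contributing $1$) and selects a part equal to $k_i$. Since $F_{g,N'}$ is symmetric in its $N'=n+m$ arguments, the sum over which $n$ of the $N'$ parts are differentiated produces $N'!/m!$, which cancels the $1/N'!$; the substitution $\tilde p_i\mapsto\frac{\epsilon}{\hbar}p_i$ for $i\le D$ and $\tilde p_i=0$ for $i>D$ then contributes $(\epsilon/\hbar)^m$ and restricts the remaining $m$ parts to $\{1,\dots,D\}$, the $\hbar$-powers combining to $\hbar^{2g-2+n}$. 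Extracting $[\hbar^{2g-2+n}]$ gives exactly the first line.

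For the combinatorial equality I would substitute \cref{theo:Combi},
\[
F_{g,n+m}[k_1,\dots,k_{n+m}]=\frac{|\Aut(\lambda)|\prod_{i=1}^{n+m}k_i}{\alpha^g\,d!}\,t^{d}\sum_{(\M,c)}(-\sqrt{\alpha}\bb)^{\nu(\M)}u_{c(1)}\cdots u_{c(r)},
\]
where $\lambda$ is the partition underlying $(k_1,\dots,k_{n+m})$ and $d=|\lambda|=k_1+\dots+k_{n+m}$, the sum running over colored monotone Hurwitz maps of genus $g$ with degree profile $\lambda$. The factors $\prod_{i=1}^{n+m}k_i$ and $\prod_{j=n+1}^{n+m}\frac{p_{k_j}}{k_j}$ then combine to $\big(\prod_{i=1}^{n}k_i\big)\prod_{j=n+1}^{n+m}p_{k_j}$, and the target is $\alpha^{-g}t^{K}\sum_{(\tM,c)}\frac{\wt(\tM)}{v(\tM)!}(-\sqrt{\alpha}\bb)^{\nu(\tM)}u_{c(1)}\cdots u_{c(r)}$ with $K=k_1+\dots+k_n$ and $t^{K}\cdot t^{L}=t^{d}$.

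The matching I would carry out by fixing a colored monotone Hurwitz map $(\M,c)$ of genus $g$ with face-degree partition $\lambda$ and comparing the total weight it contributes to each side. For each degree value $v$ set $F_v=n_v(\lambda)$ (the number of faces of degree $v$) and $b_v=\#\{i\le n: k_i=v\}$, and write $\lambda_I=\lambda\setminus\{k_1,\dots,k_n\}$ for the internal multiset, so $n_v(\lambda_I)=F_v-b_v$. On the left, the number of orderings of $\lambda_I$ into a tuple $(k_{n+1},\dots,k_{n+m})$ is $m!/|\Aut(\lambda_I)|$ with $|\Aut(\lambda_I)|=\prod_v (F_v-b_v)!$, cancelling the $1/m!$. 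On the right, building $\tM$ from $\M$ means choosing, for each boundary index $i$, a distinct face of degree $k_i$ and one of its $k_i$ active corners, which grouped by $v$ gives $\prod_v\frac{F_v!}{(F_v-b_v)!}v^{\,b_v}$ markings, while $v(\tM)=d$. Using $|\Aut(\lambda)|=\prod_v F_v!$ and $\prod_{i\le n}k_i=\prod_v v^{\,b_v}$, both contributions coincide because
\[
\frac{|\Aut(\lambda)|}{|\Aut(\lambda_I)|}\prod_{i=1}^n k_i=\prod_v\frac{F_v!}{(F_v-b_v)!}\,v^{\,b_v}.
\]
Summing over all $(\M,c)$ (maps failing the boundary-degree condition or having an internal face of degree $>D$ contribute zero to both sides) yields the proposition.

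The main obstacle is precisely this last bookkeeping step: one must track that boundaries of equal degree are ordered and hence distinguishable, that faces of a given degree and corners within a face are distinguishable, and that the constraint ``internal faces of degree at most $D$'' corresponds exactly to restricting $k_{n+1},\dots,k_{n+m}\in\{1,\dots,D\}$. Keeping these multiplicities consistent is what makes the automorphism factor $|\Aut(\lambda)|$ of \cref{theo:Combi}, the vertex-symmetry factor $v(\tM)!=d!$, and the corner-choice factors $v^{\,b_v}$ collapse into the single clean identity above.
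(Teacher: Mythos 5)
Your proof is correct and follows essentially the same route as the paper's: both substitute \cref{theo:Combi} into the defining sum, and your key identity $\frac{|\Aut(\lambda)|}{|\Aut(\lambda_I)|}\prod_{i=1}^n k_i=\prod_v\frac{F_v!}{(F_v-b_v)!}v^{b_v}$ is exactly the paper's count of $k_1\dotsm k_n\cdot\frac{|\Aut(k_1,\dots,k_{n+m})|}{|\Aut(k_{n+1},\dots,k_{n+m})|}$ markings combined with its orbit-size factor $\frac{m!}{|\Aut(k_{n+1},\dots,k_{n+m})|}$ for orderings of the internal degrees. The only differences are cosmetic: you compare the contributions of a fixed $(\mathcal{M},c)$ to both sides rather than rewriting the left side stepwise, and you additionally verify consistency with \eqref{eq:FDgnintro}, which the paper treats as a definitional remark.
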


\begin{proof}
\cref{theo:Combi} implies that the LHS of
\eqref{eq:HurwitzWithBoundaries} is given by 
\[
 \alpha^{-g}t^{k_1+\dotsm+k_n}\sum_{m \geq
  0}\sum_{k_{n+1},\dots,k_{n+m}=1}^D\sum_{(\mathcal{M},c)}\frac{k_1\dotsm
    k_n |\Aut(k_1,\dots,k_{n+m})|}{v(\mathcal{M})!m!}\,
  (-\sqrt{\alpha}\bb)^{\rho(\mathcal{M})}\prod_{i=n+1}^{m+n} (\epsilon t^{k _{i}} p_{k _{i}})\,u_{c(1)}\dotsm u_{c(r)},
\]
where we sum over colored monotone Hurwitz maps with 
degree profile $(k_1,\dots,k_{n+m})$. Fix such a map $\mathcal{M}$ and note
that one
can mark its $n$ corners to obtain a monotone Hurwitz map $\tilde{\mathcal
  M}$ with $n$ boundaries and  boundary degree profile
$[k_1,\dots,k_n]$ in $k_1\dotsm k_n \cdot
\frac{|\Aut(k_1,\dots,k_{n+m})|}{|\Aut(k_{n+1},\dots,k_{n+m})|}$ ways.
Therefore we can rewrite the above expression as a sum over all colored monotone Hurwitz maps with $n$
boundaries,  boundary degree
  profile $[k_1,\dots,k_n]$, and  internal degree
  profile $(k_{n+1},\dots,k_{n+m})$:
  \[\alpha^{-g}t^{k_1+\dotsm+k_n}\sum_{k_{n+1},\dots,k_{n+m}=1}^D\sum_{(\tM,c)}\frac{\wt(\tM)}{v(\tM)!}\frac{|\Aut(k_{n+1},\dots,k_{n+m})|}{m!}\,
  (-\sqrt{\alpha}\bb)^{\rho(\mathcal{M})}\,u_{c(1)}\dotsm u_{c(r)}.\]
We conclude the proof by noting that the orbit of the action of the permutation
group on the sequence
$[k_{n+1},\dots,k_{n+m}]$ has size
$\frac{m!}{|\Aut(k_{n+1},\dots,k_{n+m})|}$. Thus the expression above
is equal to the RHS of~\eqref{eq:HurwitzWithBoundaries}.
\end{proof}

\begin{remark}
  Note that for the choice of the weight $G(z) =
\frac{\prod_{i=1}^N(u_i+z)}{\prod_{i=N+1}^{M+N}(u_i-z)}$ we also have
combinatorial interpretations analogous to \cref{theo:Combi} and
\cref{prop:HurwitzWithBoundaries}. Indeed, in
order to get these interpretations it is enough to change variables
$t \mapsto t\frac{u_1\dotsm u_N}{u_{N+1}\dotsm u_{N+M}}$, $u_i \mapsto
  u_i^{-1}$ in \eqref{F-H} and
  \eqref{eq:HurwitzWithBoundaries}. In the following, we will use this
   parametrization as it produces less complicated equations, 
  is consistent with the parametrization used
  in~\cite{ChidambaramDolegaOsuga2024}, and is more suited to the
  applications discussed in~\cref{sec:applications}.
  \end{remark}

\subsection{Constraints for $\tau^{(\bb)}_{G}$}

We finish this section by stating one of the main results
of~\cite{ChidambaramDolegaOsuga2024}, which gives 
explicit constraints satisfied by $\tau^{(\bb)}_G$ in the case of 
rational weight $G$ as in \cref{theo:Combi}. These constraints are
related to representations of finite rank $\mathcal{W}$-algebras, and
they uniquely determine $\tau^{(\bb)}_G$. When $G(z)=\frac{(u_1+z)(u_2+z)}{v-z}$, the constraints read as follows.
 \begin{theorem}{\cite[Theorem 4.8]{ChidambaramDolegaOsuga2024}}
      \label{theo:CDO}
      For any $k \geq 0$ one has
    \begin{equation}
         D_k  \tau_G^{{(\bb)}} = 0\label{Dconstraints}
    \end{equation}
      where
      \begin{equation}
  \label{Voperator}
  D_k :=   t\left( \sum_{j\geq 1}  \mathsf J_{k-j}  \mathsf J_j
  \right)+ t(u_1+u_2-\hbar \bb k) \mathsf J_k + t u_1 u_2 \delta_{k,0} +  \sum_{j \geq 0} \mathsf J_{k-j} \mathsf J_{j+1} -(v+\hbar \bb k) \mathsf J_{k+1}.
  \end{equation}
\end{theorem}
\noindent Henceforth, we  always work with the weight $G(z)=\frac{(u_1+z)(u_2+z)}{v-z}$ unless specified otherwise.

\section{Refined topological recursion}\label{sec:RTR}
We give a brief review of the refined topological recursion formalism as introduced in \cite{KidwaiOsuga2023,Osuga2024a} building on work done in \cite{ChekhovEynard2006a}. 

\subsection{Refined spectral curve}

For our purposes, we restrict to the setting of genus $0$ curves. We refer the readers to \cite[Section 2]{Osuga2024a} for higher-genus curves.

\begin{definition}[\cite{KidwaiOsuga2023,Osuga2024a}]\label{def:RSC}
A (genus-zero) \emph{refined spectral curve} $\mathcal{S}_{{\bm \mu}}$ consists of the following data:
\begin{itemize}
    \item[{\rm \textbf{S1:}}]  $(\Sigma,x,y)$: the Riemann sphere $\Sigma=\mathbb{P}^1$ with two non-constant meromorphic functions $(x,y)$ satisfying
    \begin{equation*}
        P(x,y) = 0
    \end{equation*}
    where $P(x,y)$ is an irreducible polynomial of degree $2$ in  $y$, such that $dx$ and $dy$ do not have common zeroes\footnote{This condition was forgotten to mention in \cite[Section 2]{Osuga2024a} but implicitly assumed.}. We use $\sigma:\Sigma\to\Sigma$ to denote the canonical global involution of $x:\Sigma\to\mathbb{P}^1$ and  $\mathcal{R} \subset \Sigma$ to denote the set of ramification points of $x$, i.e., the set of $\sigma$-fixed points.
    \item[{\rm \textbf{S2:}}]  $(\mathcal{P}_+,\left\{\mu_a\right\}_{a\in  \mathcal{P}_+})$: a choice of a decomposition $\mathcal{P}_+\sqcup\sigma(\mathcal{P}_+)=\mathcal{P}$ where $\mathcal{P}$ is the set of  zeroes and poles of $(y(z)-y(\sigma(z)))dx(z)$ which are not ramification points, and associated parameters $\mu_a\in\mathbb{C}$ for all $a\in\mathcal{P}_+$.
\end{itemize}
\end{definition}

Let us denote by $B$ the \emph{fundamental bidifferential} which is the unique symmetric bidifferential on $\Sigma$ with a double pole on the diagonal with biresidue $1$ and no other poles.
In any global coordinate $z$\footnote{By abuse of notation, we use $z$ both as a point in $\Sigma$ and  as a global coordinate on $\Sigma$. In our genus-zero setting this does not cause any issue.}, the bidifferential $B$ admits the following rational expression:
\begin{equation*}
    B(z_1,z_2)=\frac{dz_1dz_2}{(z_1-z_2)^2},
\end{equation*}
and it satisfies
\begin{equation}\label{eq:BsigmaB}
    B(z_1,z_2)+B(z_1,\sigma(z_2)) = B(z_1,z_2)+B(\sigma(z_1),z_2) = \frac{dx(z_1)dx(z_2)}{(x(z_1)-x(z_2))^2}.
\end{equation}
Furthermore, for $c\in\Sigma\setminus \mathcal{R}$, we define $\eta^c$ by
\begin{equation}\label{eq:etadef}
    \eta^c(z):=\int_{\sigma(c)}^cB(z,\cdot)
\end{equation}
which is a differential in $z$ with residue $\pm1$ at $z=c,\sigma(c)$ and no other poles.

\subsection{Refined topological recursion}

Before giving the definition of refined topological recursion, or RTR for short, let us  introduce some notation. 

\begin{definition}
Given a meromorphic function $x:\Sigma\to\mathbb{P}^1$ of degree two and a sequence of symmetric multidifferentials $\{\omega_{g,n}\}_{(g,n)  \in \frac12\mathbb{Z}_{\geq0}\times \mathbb{Z}_{\geq1}}$ on $\Sigma$ with coordinate $z$, we define for any $(g,n) \in \frac12\mathbb{Z}_{\geq0}\times \mathbb{Z}_{\geq0}$
\begin{multline}\label{eq:Recdef}
	Q^\omega_{g,1+n}(z;z_{[n]}):= \sum_{\substack{g_1+g_2=g \\ J_1\sqcup J_2=[n]}} \omega_{g_1,1+|J_1|}(z,z_{J_1})\,\omega_{g_2,1+|J_2|}(z,z_{J_2}) +\sum_{i=1}^n\frac{\omega_{g,n}(z,z_{[n]}\backslash\{z_i\}) dx(z) dx(z_i)}{(x(z)-x(z_i))^2}\\
	+\omega_{g-1,2+n}(z,z,z_{[n]})  +\bb\,dx(z)\, d_{z}\frac{\omega_{g-\frac12,n+1}(z,z_{[n]})}{dx(z)},
\end{multline}
where $(z,z_{[n]}) \in \Sigma^{n+1}$, $d_z$ is the exterior derivative with respect to $z$, and whenever $\omega_{0,1}$ appears, we replace it by the antisymmetrized version $\frac{1}{2}\left(\omega_{0,1}(z) - \omega_{0,1}(\sigma(z))\right)$ with respect to $\sigma$, the canonical involution of $x$. We similarly define $\text{Rec}^\omega_{g,1+n}$ by excluding the terms with $(g_1,J_1)=(g,[n])$ or $(g_2,J_2)=(g,[n])$.
\end{definition}

\begin{remark}\label{rem:formal Q}
In Section \ref{sec:w/o}, we will use the same notation $Q^\phi$ and $\operatorname{Rec}^{\phi}$ for $\phi_{g,n}$ which are germs of meromorphic functions on a formal disk with coordinate $x^{-1}$, where we choose the required meromorphic function to be $x$. In this formal case, the involution operator $\sigma$ is not defined, and we keep $\phi_{0,1}$ in $Q^\phi$ or $\operatorname{Rec}^{\phi}$ without replacing.
\end{remark}

The objects $Q_{g,1+n}(z;z_{[n]})$ and $\text{Rec}_{g,1+n}(z;z_{[n]})$ are  quadratic differentials in the first entry $z \in \Sigma$, and differentials in the other entries. For readers familiar with the usual Chekhov--Eynard--Orantin topological recursion, it is worth noting that the second sum in the first line is merely an artifact of our unconventional definition of $\omega_{0,2}$ (see equation \eqref{def02} below). The main difference to note is the $\bb$-dependent term appearing in \eqref{eq:Recdef}.

We are ready to give the definition of refined topological recursion. Given a refined spectral curve $\mathcal S_{\bm \mu}$,  refined topological recursion will produce a sequence of multidifferentials $\omega_{g,n}$, where $(g,n) \in \frac{1}{2} \mathbb Z_{\geq 0} \times \mathbb Z_{\geq 1}$, on the Riemann surface $\Sigma$ underlying the spectral curve.  First, we  define the unstable correlators $\omega_{g,n}$ where  $(2g-2+n) \leq 0$.
\begin{definition}[\cite{KidwaiOsuga2023,Osuga2024a}]
Given a refined spectral curve $\mathcal S_{\bm \mu}$, we define the  differentials $\omega_{0,1}(z), \omega_{\frac12,1}(z) $ and the bi-differential $\omega_{0,2}(z_1,z_2)$ on the underlying Riemann surface $\Sigma$, which we will often refer to as the  \textit{unstable refined topological recursion correlators}:
\begin{align} 
	\omega_{0,1}(z):=&y(z)dx(z), \quad
	\omega_{0,2}(z_1,z_2):=-B(z_1,\sigma(z_2)),\label{def02}\\ \label{eq:1/2def}
	\omega_{\frac12,1}(z):=&\frac{\bb}{2}\left(-\frac{d\left(y(z)-y(\sigma(z))\right)}{y(z)-y(\sigma(z))}+\sum_{c\in\mathcal{P}_+}\mu_c\,\eta^c(z)\right). 
\end{align}
\end{definition}
\noindent Given these unstable correlators, the refined topological recursion is a formula that constructs the stable correlators $\omega_{g,n}$ for $(2g-2+n) >0$.

\begin{definition}[\cite{KidwaiOsuga2023,Osuga2024a}]
Given a genus zero refined spectral curve $\mathcal{S}_{\bm \mu}$, and the associated unstable correlators,  we define the  \emph{stable refined topological recursion correlators} $\omega_{g,1+n}(z_0,z_{[n]})$ for $(g,n) \in \frac{1}{2}\mathbb Z_{\geq 0} \times \mathbb Z_{\geq 0}$ such that $(2g-2+n) \geq 0$ as the following multidifferentials:
    \begin{equation}\label{eq:RTR}
        \omega_{g,1+n}(z_0,z_{[n]})=\frac{1}{2\pi \rm i} \oint_{z\in C_+-C_-}\frac{\eta^z(z_0)}{2(\omega_{0,1}(z)-\omega_{0,1}(\sigma(z)))}{\rm Rec}^\omega_{g,1+n}(z;z_{[n]}),
    \end{equation}
where  $C_+$ is a contour that encircles all the points of $\mathcal{P}_+\cup z_0 \cup z_{[n]} $ inside but none of the points of $\mathcal{R}\cup\sigma(\mathcal{P}_+)\cup\sigma(z_0) \cup \sigma(z_{[n]})$ and $C_-$ encircles all the points of $\mathcal{R}\cup\sigma(\mathcal{P}_+)\cup\sigma(z_{[n]})$ inside but none of the points of $\mathcal{P}_+\cup z_{[n]}$. This formula is recursive on the integer $ (2g-2+n)$, and is called the \emph{refined topological recursion}.
\end{definition}

The collection of multidifferentials $\omega_{g,n}$ for $(g,n) \in \frac{1}{2} \mathbb Z_{\geq 0} \times \mathbb Z_{\geq 1}$ (i.e., unstable and stable  correlators together) are called the \textit{refined topological recursion correlators}, or RTR correlators for short. Notice that the variable $z_0$ is treated non-symmetrically in the refined topological recursion formula and hence it is not clear, a priori, that  the $\omega_{g,n}$ constructed by the refined topological recursion are in fact symmetric multidifferentials. Along with this symmetry, a few other important properties regarding the structure of the poles of the correlators are proved in \cite{KidwaiOsuga2023,Osuga2024a}.

\begin{theorem}[Pole structure of the $\omega_{g,n}$] The RTR correlators $\{\omega_{g,n}\}_{g,n}$ satisfy the following properties:
     \begin{itemize}[leftmargin=16mm]
        \item The correlators $\omega_{g,n}$ are symmetric meromorphic multidifferentials.
        \item The  stable correlators $\omega_{g,n} (z_{[n]})$ have possible poles in any variable, say $z_i$, at the points $\mathcal{R} \cup \sigma(\mathcal{P}_+) \cup \sigma(z_{[n]})$ except the poles of the anti-invariant part of $ydx$ w.r.t. $\sigma$.
        \item The  stable correlators $\omega_{g,n}$ are residue free, i.e., at any pole, they do not have residues.
        \item The possible poles of $\omega_{\frac{1}{2},1}$ are at the points $ \mathcal R \cup \mathcal P$. 
    \end{itemize}
\end{theorem}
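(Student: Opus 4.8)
The plan is to verify each of the four bullet points by tracing through the refined topological recursion formula \eqref{eq:RTR} and using induction on the quantity $(2g-2+n)$. All four properties concern the analytic structure of the correlators, so the central tool is the observation that $\omega_{g,1+n}(z_0, z_{[n]})$ is produced as a contour integral of $\operatorname{Rec}^\omega_{g,1+n}$ against the kernel $\frac{\eta^z(z_0)}{2(\omega_{0,1}(z) - \omega_{0,1}(\sigma(z)))}$, so the poles of the output are controlled by the poles of the integrand together with the residues picked up at $C_+ - C_-$.

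First I would establish the pole-structure statements for the stable correlators (bullets two and three), proceeding by induction. For the base cases one computes directly from the unstable correlators; for the inductive step one assumes that all $\omega_{g',n'}$ with $(2g'-2+n') < (2g-2+n)$ satisfy the claimed pole structure and residue-freeness, and feeds them into $\operatorname{Rec}^\omega_{g,1+n}$ via \eqref{eq:Recdef}. The key local analysis is to understand, at each candidate pole, what the integrand contributes: the kernel has a simple zero of $(\omega_{0,1}(z)-\omega_{0,1}(\sigma(z)))$ in the denominator at points of $\mathcal R$, and $\eta^z(z_0)$ carries the dependence on $z_0$ with residues $\pm 1$ at $z = z_0, \sigma(z_0)$. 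Evaluating the residues at $C_+$ and $C_-$ then localizes the poles of $\omega_{g,1+n}$ in the variable $z_0$ to $\mathcal R \cup \sigma(\mathcal P_+) \cup \sigma(z_{[n]})$, while the explicit exclusion of the poles of the anti-invariant part of $y\,dx$ comes from the cancellation built into the kernel's denominator. Residue-freeness should follow from the fact that $\eta^z(z_0)$ and $B$ are themselves residue-free in the recursive variable, so no single-pole contributions are generated; this is exactly the content of the corresponding results in \cite{KidwaiOsuga2023, Osuga2024a}.

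Next I would address symmetry (bullet one). Since $z_0$ is singled out in \eqref{eq:RTR}, symmetry is not manifest and must be proved separately. The standard route is to show that the correlators satisfy the \emph{linear} and \emph{quadratic loop equations}, from which symmetry is a formal consequence; in the refined setting the $\bb$-dependent term $\bb\, d_z \frac{\omega_{g-\frac12,n+1}(z,z_{[n]})}{dx(z)}$ in \eqref{eq:Recdef} modifies the loop equations, and one verifies that the modified equations still force symmetry. This is again an appeal to \cite{Osuga2024a}, where the symmetry is proven in tandem with the induction above, so the cleanest presentation couples bullets one through three into a single simultaneous induction on $(2g-2+n)$.

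Finally, the pole structure of $\omega_{\frac12,1}$ (bullet four) is read off directly from its closed form \eqref{eq:1/2def}: the term $-\frac{d(y(z)-y(\sigma(z)))}{y(z)-y(\sigma(z))}$ is a logarithmic derivative, so its poles are simple and located at the zeroes and poles of the anti-invariant part $y(z)-y(\sigma(z))$, which by the definition of $\mathcal P$ in {\rm\textbf{S2}} together with the ramification points give exactly $\mathcal R \cup \mathcal P$; the remaining terms $\sum_{c\in\mathcal P_+}\mu_c\,\eta^c(z)$ have poles only at $c, \sigma(c) \in \mathcal P$ by the defining property of $\eta^c$ in \eqref{eq:etadef}. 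The main obstacle is the simultaneous induction: one must control the new $\bb$-term, which lowers $g$ by $\tfrac12$ and differentiates, to ensure it does not introduce spurious poles outside the allowed set nor violate residue-freeness; verifying that $d_z\frac{\omega_{g-\frac12,n+1}}{dx}$ respects the inductive hypothesis at the ramification points $\mathcal R$ is the delicate step, and it is precisely here that the degree-two assumption on $x$ and the no-common-zeroes condition in {\rm\textbf{S1}} are used.
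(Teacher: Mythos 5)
There is nothing in the paper to compare your argument against: the paper does not prove this theorem at all. It is imported from \cite{KidwaiOsuga2023,Osuga2024a} — the sentence immediately preceding the statement says these properties ``are proved in'' those references — so the paper's ``proof'' is a citation, and your proposal, which at every essential step (pole localization, residue-freeness, symmetry) likewise defers to the same two references, sits at exactly the same level of rigor as the paper itself. As a description of what those references actually do, your outline is broadly accurate: the argument there is a simultaneous induction on $2g-2+n$ in which symmetry is not manifest from \eqref{eq:RTR} and is established alongside the pole structure, with the term $\bb\, d_z\bigl(\omega_{g-\frac12,n+1}/dx\bigr)$ in \eqref{eq:Recdef} being the genuinely new difficulty relative to the unrefined case. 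The one bullet you prove from the paper's own definitions is the fourth, and that argument is correct: by \eqref{eq:1/2def} the logarithmic-derivative term has poles exactly at the zeroes and poles of $y(z)-y(\sigma(z))$, i.e.\ at $\mathcal R\cup\mathcal P$, and each $\eta^c$ with $c\in\mathcal P_+$ contributes poles only at $c,\sigma(c)\in\mathcal P$ by \eqref{eq:etadef}. Two inaccuracies in your sketch are worth flagging, since they matter if one were to actually carry out the induction rather than cite it. First, at a ramification point the denominator $\omega_{0,1}(z)-\omega_{0,1}(\sigma(z))=(y(z)-y(\sigma(z)))dx(z)$ generically vanishes to order \emph{two} (both factors have simple zeroes there), not one as you assert. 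Second, residue-freeness of the stable correlators does not ``follow from $\eta^z(z_0)$ and $B$ being residue-free'': $\eta^z(z_0)$ has residues $\pm1$ at $z_0=z,\sigma(z)$, and residue-freeness is a nontrivial theorem of \cite{KidwaiOsuga2023} proved within the induction, not a formal consequence of properties of the kernel.
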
  

The RTR correlators depend on a parameter $\bb \in \mathbb C$, and hence it's worth noting the following properties concerning the dependence on $ \bb$ \cite{KidwaiOsuga2023,Osuga2024a}.
\begin{proposition}[$\bb$-dependence of the $\omega_{g,n}$]  As functions of $\bb$, the RTR correlators have the following properties:
	\begin{itemize}
		\item The correlators $\omega_{g,n}$ are polynomials in $\bb$ of degree at most $2g$.
		\item The refined topological recursion correlators reduce to  the Chekhov--Eynard--Orantin topological recursion correlators \cite{ChekhovEynardOrantin2006,EynardOrantin2007} under the substitution $\bb=0$.
	\end{itemize}
\end{proposition}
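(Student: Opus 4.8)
The plan is to prove both assertions at once by induction on the integer $2g-2+n$, on which the refined topological recursion \eqref{eq:RTR} is recursive. The key structural observation is that the recursion kernel $\tfrac{\eta^z(z_0)}{2(\omega_{0,1}(z)-\omega_{0,1}(\sigma(z)))}$ carries no $\bb$-dependence whatsoever: the differential $\eta^z$ is built from the purely geometric bidifferential $B$, and $\omega_{0,1}=y\,dx$ is $\bb$-independent by \eqref{def02}. Hence the contour integral in \eqref{eq:RTR} is a fixed (in $\bb$) linear operation applied to $\operatorname{Rec}^\omega_{g,1+n}(z;z_{[n]})$, and every statement about the $\bb$-dependence of $\omega_{g,n}$ is inherited verbatim from the corresponding statement about $\operatorname{Rec}^\omega_{g,1+n}$. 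One also checks directly from \eqref{eq:Recdef} that each correlator appearing on the right-hand side has its value of $2g-2+n$ strictly smaller (by one) than that of the output, so the induction is well-founded.

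\textbf{Degree bound.} I would first record the base cases: $\omega_{0,1}$ and $\omega_{0,2}$ are independent of $\bb$, so of degree $0=2\cdot 0$, while $\omega_{\frac12,1}$ is linear in $\bb$ by \eqref{eq:1/2def}, of degree $1=2\cdot\tfrac12$. For the inductive step I would inspect the four types of terms in $Q^\omega_{g,1+n}$, hence in $\operatorname{Rec}^\omega_{g,1+n}$, as written in \eqref{eq:Recdef}. By the inductive hypothesis, $\omega_{g-1,2+n}$ has $\bb$-degree at most $2(g-1)\le 2g$; each bilinear term $\omega_{g_1,1+|J_1|}\,\omega_{g_2,1+|J_2|}$ with $g_1+g_2=g$ has degree at most $2g_1+2g_2=2g$; each term of the $\sum_i$ involving $\omega_{g,n}$ has degree at most $2g$; and the distinguished term $\bb\,d_z\tfrac{\omega_{g-\frac12,n+1}}{dx(z)}$ has degree at most $1+2(g-\tfrac12)=2g$, the explicit factor of $\bb$ exactly absorbing the degree deficit of the half-integer correlator. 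Thus $\operatorname{Rec}^\omega_{g,1+n}$, and therefore $\omega_{g,1+n}$, is a polynomial in $\bb$ of degree at most $2g$.

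\textbf{Reduction at $\bb=0$.} I would run the same induction to establish the parity statement $\omega_{g,n}(-\bb)=(-1)^{2g}\,\omega_{g,n}(\bb)$. It holds for the base cases ($\omega_{0,1},\omega_{0,2}$ even, $\omega_{\frac12,1}$ odd), and is preserved by each term of \eqref{eq:Recdef}: the first three types of terms have parity $(-1)^{2g}$ immediately from the hypothesis, while $\bb\,d_z\tfrac{\omega_{g-\frac12,n+1}}{dx(z)}$ has parity $(-1)\cdot(-1)^{2g-1}=(-1)^{2g}$, the sign from $\bb$ compensating the half-integer shift. In particular every half-integer correlator is an \emph{odd} polynomial in $\bb$ and so vanishes at $\bb=0$. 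Setting $\bb=0$ in \eqref{eq:Recdef} then removes the explicit $\bb$-term and kills every bilinear contribution with a half-integer $g_i$ (both factors being odd), so the recursion closes on the integer-indexed correlators and its integrand reduces to precisely the first three terms of \eqref{eq:Recdef}. It remains to recognize the result as the Chekhov--Eynard--Orantin recursion, which is the $\bb=0$ specialization carried out in \cite{KidwaiOsuga2023,Osuga2024a}: one shows the contour $C_+-C_-$ localizes to residues at the ramification points $\mathcal R$, and, using $\omega_{0,2}(z_1,z_2)=-B(z_1,\sigma(z_2))$ together with \eqref{eq:BsigmaB}, that the diagonal structure $\omega_{g-1,2+n}(z,z,z_{[n]})$ reassembles into the standard Eynard kernel acting on $\omega_{g-1,2+n}(z,\sigma(z),z_{[n]})$.

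The degree and parity bookkeeping is routine; the genuine obstacle is this final identification, where the nonstandard loop-equation form of the recursion---with the global contours $C_\pm$ and the $Q$-structure evaluated on the diagonal---must be matched term by term with the residue-at-ramification-points form of Chekhov--Eynard--Orantin. This matching relies on the full pole-structure analysis (residue-freeness of the stable correlators, and their poles being confined to $\mathcal R\cup\sigma(\mathcal P_+)\cup\sigma(z_{[n]})$), and is the step for which I would lean most heavily on \cite{KidwaiOsuga2023,Osuga2024a}.
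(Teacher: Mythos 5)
Your proposal is correct, but there is no proof in the paper to compare it against: the paper states this proposition as a result imported from \cite{KidwaiOsuga2023,Osuga2024}, with no argument given. Your induction therefore supplies something the paper deliberately omits, and most of it is sound and self-contained. The two pillars hold: the kernel $\eta^z(z_0)/\bigl(2(\omega_{0,1}(z)-\omega_{0,1}(\sigma(z)))\bigr)$ and the contours $C_\pm$ carry no $\bb$-dependence, so degree and parity in $\bb$ are inherited from $\operatorname{Rec}^\omega_{g,1+n}$; the base cases from \eqref{def02} and \eqref{eq:1/2def} and the four types of terms in \eqref{eq:Recdef} are accounted for correctly, with the explicit factor of $\bb$ absorbing the degree deficit (and flipping the parity) of the half-integer correlator. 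The parity statement $\omega_{g,n}(-\bb)=(-1)^{2g}\omega_{g,n}(\bb)$ is a genuine strengthening not recorded in the proposition, and it is what makes the $\bb=0$ closure on integer genus automatic. Two caveats. First, your claim that every correlator on the right-hand side has $2g-2+n$ smaller \emph{by one} is imprecise for the bilinear terms: each factor is only strictly smaller (over the integers), and it is the exclusions defining $\operatorname{Rec}^\omega$ that forbid a factor of equal characteristic paired with $\omega_{0,1}$; this is harmless but worth stating accurately since well-foundedness is the backbone of the whole argument. Second, and more substantively, the identification of the $\bb=0$ recursion with Chekhov--Eynard--Orantin --- the localization of $C_+-C_-$ onto residues at $\mathcal{R}$ and the $\omega_{0,2}$-convention bookkeeping via \eqref{eq:BsigmaB} --- is exactly the step you defer to \cite{KidwaiOsuga2023,Osuga2024a}, which is also precisely what the paper's citation covers. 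So your argument does not remove the dependence on prior work for the second assertion, but it does isolate cleanly the only point at which that dependence is genuinely needed, and it proves the first assertion outright.
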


\begin{remark}\label{rem:RTRequiv}
	It is  worth noting that one can express the refined topological recursion formula \eqref{eq:RTR} in a slightly different form:
	\begin{equation}\label{eq:RTRequiv}
		\omega_{g,1+n}(z_0,z_{[n]})=\frac{1}{2\pi \rm i} \oint_{z\in C_+}\frac{\eta^z(z_0)}{(\omega_{0,1}(z)-\omega_{0,1}(\sigma(z)))}{\rm Rec}^\omega_{g,1+n}(z;z_{[n]}),
	\end{equation} where we notice that the integrand as a function of $z$ only has poles at the points contained in the region bounded by $C_+ \cup C_-$. As the sum of the residues at all the poles of a meromorphic differential is zero, the contributions from the contour $-C_-$ equal the contributions from the contour $C_+$, which gives the above formula. 
\end{remark}

\subsection{Variational formula}

We often consider a family of refined spectral curves $\mathcal{S}_{\bm \mu}({\bm t})$ depending on a tuple of parameters ${\bm t}=(t_1,\ldots,t_{|{\bm t}|})$. In our genus-zero setting, this means that we restrict to the case when the underlying Riemann surface $\Sigma$ is always of genus $ 0$. The two meromorphic functions $x,y$ depend on ${\bm t}$ and consequently, all the correlators $\omega_{g,n}$ as well. Then, a natural question is to ask whether the $\{\omega_{g,n}\}_{g,n}$ behaves well in the family (i.e., under the variation of the parameters $\bm t$). In general, this is quite a difficult global question (see \cite{BorotBouchardChidambaramKramerShadrin2025} about when one can take limits in topological recursion in the $\bb = 0$ setting).

However, it turns out that $\{\omega_{g,n}\}_{g,n}$ behaves well (locally) under the so-called \emph{variation for fixed $x$}. Let us explain how this works. Choose $t\in{\bm t}$. For a meromorphic multidifferential $\omega$ on $\Sigma^n$, one can associate the corresponding function $\mathcal{W}$ by
\begin{equation}
    \omega(z_1,\ldots,z_n;t)=:\mathcal{W}(z_1,\ldots,z_n;t)\;dx(z_1;t)\cdots dx(z_n;t),
\end{equation}
where $d_i$ denotes the exterior derivative with respect to $z_i$ and $dx(z_i;t)=d_i x(z_i;t)$. 
Instead of naively considering the partial derivative with respect to $t$, we define the \emph{variational operator} $\delta_t^{(n)}$ with respect to $t$ by (c.f. \cite[Definition 3.1]{Osuga2024})
\begin{equation*}
    \delta_t^{(n)}\;\mathcal{W}(z_1,\dots,z_n;t):=\frac{\partial\mathcal{W}(z_1,\ldots,z_n;t)}{\partial t}-\sum_{i=1}^n\frac{\partial x(z_i;t)}{\partial t}\frac{d_i\mathcal{W}(z_1,\ldots,z_n;t)}{dx(z_i;t)}.
\end{equation*}
Similarly for a multidifferential, we define
\begin{equation*}
     \delta_t^{(n)}\;\omega(z_1,\dots,z_n;t):=dx(z_1;t)\cdots dx(z_n;t)\;\delta_t^{(n)}\;\mathcal{W}(z_1,\dots,z_n;t).
\end{equation*}
See  \cref{lem:alpha-derivative} to understand how the variational operator $\delta_t^{(n)}$ naturally arises in our context.

Henceforth, we drop the $t$-dependence in $x$ for brevity. With this definition, it is easy to check that
\begin{equation*}
    \delta_t^{(1)}x(p_1)=0,\qquad \delta_t^{(1)}dx(p_1)=0,
\end{equation*}
and furthermore, for a meromorphic function $f$ pulled back from the base curve $\mathbb{P}^1$, we have
\begin{equation*}
    \delta_t^{(n)}\;f(x(z_1),\dots,x(z_n);t)=\partial_t f(x(z_1),\dots,x(z_n);t).
\end{equation*}
Thus, $\delta_t^{(n)}$ indeed acts as the ``derivative for fixed $x$'' as in \cite{EynardOrantin2007}.

Let us denote by $\mathcal{P}^\infty$ the set of poles of $(y(z)-y(\sigma(z)))dx(z)$ which may be ramification points, and by $D_i+1$ the  order of the pole at $a_i\in\mathcal{P}^\infty$. Typically, ``good parameters'' of topological recursion with respect to which one would like to understand the variation of the correlators are associated with $\mathcal{P}^\infty$. More concretely, it is proven in \cite{Osuga2024} (and in \cite{EynardOrantin2007} when $\bb=0$) that the action of $\delta_t^{(n)}$ on $\omega_{g,n}$ is related to a certain integral of  $\omega_{g,n+1}$:

\begin{definition}[\cite{Osuga2024}]\label{def:deformation}
     We say that a refined spectral curve $\mathcal{S}_{\bm \mu}(\bm t)$ satisfies the \emph{refined deformation condition with respect to $t\in\bm t$} if there exists a contour $\gamma\subset\Sigma$ and a function $\Lambda$ holomorphic along $\gamma$ such that
    \begin{itemize}
        \item[\textbf{D1:}] The poles of $\Lambda$ lie in $\mathcal{P}^\infty$ and the  order of the pole at $a_i\in\mathcal{P}^\infty$ is $D_i$ or less.
        \item[\textbf{D2:}] The pair $(\gamma,\Lambda)$ satisfies
         \begin{align}
         \delta_t^{(1)}\left(\omega_{0,1}(z_1)-\omega_{0,1}(\sigma(z_1))\right)=&\int_{z\in\gamma}\Lambda(z)\,\left(\omega_{0,2}(z,z_1)-\omega_{0,2}(z,\sigma(z_1))\right),\label{BCCG1}\\
         \delta_t^{(2)}\omega_{0,2}(z_1,z_2)=&\int_{z\in\gamma}\Lambda(z)\,\omega_{0,3}(z,z_1,z_2)\\
         \delta_t^{(1)}\omega_{\frac12,1}(z_1)=&\int_{z\in\gamma}\Lambda(z)\,\omega_{\frac12,2}(z,z_1).
    \end{align}
    \end{itemize}
\end{definition}
\begin{theorem}[\cite{Osuga2024}]\label{thm:variation}
    If a refined spectral curve $\mathcal{S}_{\bm \mu}(\bm t)$ satisfies the refined deformation condition with respect to $t\in\bm t$, then for all $(g,n) \in \frac{1}{2}\mathbb Z_{\geq 0} \times \mathbb Z_{\geq 1}$,
    \begin{equation}\label{eq:varformula}
        \delta_t^{(n)}\omega_{g,n}(z_1,\ldots,z_n)=\int_{z \in\gamma} \Lambda(z) \,\omega_{g,1+n}(z,z_1,\ldots,z_n).
    \end{equation}
\end{theorem}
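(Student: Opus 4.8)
The plan is to prove \eqref{eq:varformula} by induction on the integer $m:=2g-2+n$, so that the integer- and half-integer-$g$ correlators are treated simultaneously (they are coupled through the $\bb$-term of \eqref{eq:Recdef}). The base cases are the three unstable correlators with $m\le 0$, namely $\omega_{0,1}$, $\omega_{\frac12,1}$ and $\omega_{0,2}$: for these the claimed identity is exactly the content of \textbf{D2}, with the one caveat that \textbf{D2} constrains only the $\sigma$-antisymmetric part $\omega_{0,1}(z)-\omega_{0,1}(\sigma z)$ of $\omega_{0,1}$ --- but this is precisely the combination that enters the recursion, so it is all the induction will ever use. Before the inductive step I would record one clean operator lemma: interpreting $\delta_t^{(1)}$ as the derivative at fixed $x$ (legitimate away from $\mathcal R$ in the local coordinate $x$), it annihilates $dx$ and commutes with the exterior derivative, $\delta_t^{(1)}\circ d_z=d_z\circ\delta_t^{(1)}$. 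This justifies moving $\delta_t$ through the $1/dx(z)$ and the $d_z$ in the $\bb$-term of \eqref{eq:Recdef}.

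For the inductive step, fix $m\ge 1$ and apply $\delta_t^{(1+n)}$ to \eqref{eq:RTR}. The integration variable $z$ is a dummy, so $\delta_t^{(1+n)}$ acts only on $(z_0,z_{[n]})$ and on the explicit $t$-dependence; after checking that the contour $C_+-C_-$ may be kept independent of $t$, I move $\delta_t^{(1+n)}$ under the integral and apply the Leibniz rule to the product $K(z_0,z)\cdot\mathrm{Rec}^\omega_{g,1+n}(z;z_{[n]})$, where $K(z_0,z)=\tfrac{\eta^z(z_0)}{2(\omega_{0,1}(z)-\omega_{0,1}(\sigma z))}$ is the recursion kernel. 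The essential point to keep track of is that, since $\delta_t^{(1+n)}$ does not correct the $z$-slot, applying the inductive hypothesis to a correlator in $\mathrm{Rec}^\omega$ that contains the dummy $z$ yields the expected term $\int_\gamma\Lambda(w)\,(\cdot)$ \emph{plus} a correction $\tfrac{\partial x}{\partial t}(z)\,\tfrac{d_z(\cdot)}{dx(z)}$ accounting for the missing $z$-correction.

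Granting for the moment that these $z$-corrections cancel (see below), the remaining bookkeeping is as follows. By induction, the $\delta_t$ of each \emph{stable} correlator in $Q^\omega$ --- the diagonal $\omega_{g-1,2+n}$, the quadratic products, the $\omega_{g,n}$ of the Cauchy sum, and the $\omega_{g-\frac12,n+1}$ of the $\bb$-term --- reproduces, via $\int_\gamma\Lambda(w)$, every term of $\mathrm{Rec}^\omega_{g,2+n}(z;w,z_{[n]})$ in which $w$ does \emph{not} enter through an $\omega_{0,2}$-factor. The remaining terms --- the quadratic splitting with factor $\omega_{0,2}(z,w)$ and the Cauchy term at the new index $w$ --- are exactly those whose $\omega_{0,1}$-companion is the top term excluded from $\mathrm{Rec}^\omega_{g,1+n}$, and they are supplied instead by the kernel variation $\delta_t K$ and by $\delta_t$ of the Cauchy kernels already present in $\mathrm{Rec}^\omega_{g,1+n}$. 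Here the second \textbf{D2} equation computes $\delta_t$ of $\tfrac{dx(z)dx(z_i)}{(x(z)-x(z_i))^2}=-(\omega_{0,2}(z,z_i)+\omega_{0,2}(z,\sigma z_i))$ (using \eqref{eq:BsigmaB}) and produces the needed $\omega_{0,2}(z,w)$, while the first \textbf{D2} equation governs $\delta_t$ of the $\sigma$-antisymmetric denominator of $K$. Reassembling gives $\tfrac{1}{2\pi\mathrm i}\oint_{C_+-C_-}K(z_0,z)\int_\gamma\Lambda(w)\,\mathrm{Rec}^\omega_{g,2+n}(z;w,z_{[n]})$, which by \eqref{eq:RTR} and the symmetry of the correlators is $\int_\gamma\Lambda(w)\,\omega_{g,2+n}(z_0,w,z_{[n]})$, as desired.

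The main obstacle is the cancellation of the $z$-correction terms $\tfrac{\partial x}{\partial t}(z)\,\tfrac{d_z(\cdot)}{dx(z)}$ flagged above. Summed over all correlators in $\mathrm{Rec}^\omega$ that contain $z$, these must cancel against the contribution of $\delta_t$ acting on the kernel denominator $\omega_{0,1}(z)-\omega_{0,1}(\sigma z)$ together with an integration by parts in $z$: the factor $d_z$ turns the offending sum into a total $z$-derivative, which the closed contour $C_+-C_-$ annihilates. Making this precise is the delicate part, and it is intertwined with two analytic checks controlled by \textbf{D1}: that inserting $\int_\gamma\Lambda\,\omega_{g,\bullet+1}$ creates only poles in $\mathcal P^\infty$ of bounded order in the $z$-variable, so that no pole crosses $C_+-C_-$ or $\gamma$ and Fubini between the $z$- and $w$-integrals applies; and that the residue manipulation behind \eqref{eq:RTRequiv} stays valid. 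The sign/$\sigma$ bookkeeping in the reassembly, resting on $\omega_{0,2}(z_1,z_2)=-B(z_1,\sigma z_2)$ and \eqref{eq:BsigmaB}, is where an error is most likely to creep in; once it and the cancellation are settled, the termwise matching is routine.
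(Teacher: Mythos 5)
First, a remark on the comparison itself: the paper does not prove this statement --- it is imported verbatim from \cite{Osuga2024} and only used later (e.g.\ in \cref{prop:deform} and \cref{thm:w/}) --- so your proposal can only be judged against the strategy of \emph{loc.\ cit.} Your skeleton does match it: induction on $2g-2+n$, base cases supplied by \textbf{D2} (including the correct observation that only the anti-invariant combination $\omega_{0,1}(z)-\omega_{0,1}(\sigma(z))$ ever enters the recursion), differentiation of \eqref{eq:RTR} under the integral with $t$-independent contours, and reassembly of $\int_\gamma\Lambda(w)\,\operatorname{Rec}^\omega_{g,2+n}(z;w,z_{[n]})$. However, the two steps on which everything hinges are, respectively, unproven and misattributed, so the proposal has genuine gaps.

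The first gap is the one you flag yourself: the cancellation of the uncorrected-$z$ terms $\frac{\partial x}{\partial t}(z)\,d_z(\cdot)/dx(z)$. This is not a bare integration by parts killed by the closed contour: those terms sit \emph{multiplied by} the kernel $\eta^z(z_0)/\bigl(2(\omega_{0,1}(z)-\omega_{0,1}(\sigma(z)))\bigr)$, so moving $d_z$ off them produces derivatives of the kernel, which must then cancel against $\delta_t\eta^z(z_0)$ and against part of the denominator variation; making this work requires the refined loop equations --- the statement that $Q^\omega_{g,1+n}=\operatorname{Rec}^\omega_{g,1+n}+(\omega_{0,1}(z)-\omega_{0,1}(\sigma(z)))\,\omega_{g,1+n}(z,z_{[n]})$ has controlled pole structure in $z$ (this is exactly the input \cite{KidwaiOsuga2023} that the paper itself uses in the proof of \cref{thm:w/o}) --- and your plan never invokes them. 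The second gap is a concrete bookkeeping error: you claim the terms of $\operatorname{Rec}^\omega_{g,2+n}(z;w,z_{[n]})$ not produced by induction (the splitting $\omega_{0,2}(z,w)\,\omega_{g,1+n}(z,z_{[n]})$ and the Cauchy term at $w$) are supplied by applying the second \textbf{D2} equation to $\delta_t$ of the Cauchy kernels. This cannot be right: $dx(z)dx(z_i)/(x(z)-x(z_i))^2$ is a pullback from the base with no explicit $t$-dependence, so its full variation $\delta_t^{(2)}$ vanishes identically, and its partially-corrected variation (with the dummy $z$-slot uncorrected) is proportional to $\partial_t x(z)$, i.e.\ it belongs to your $z$-correction pile, not to the missing terms; moreover $\delta_t\omega_{0,2}$ produces $\omega_{0,3}$-type terms, never an $\omega_{0,2}(z,w)$ factor. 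The correct mechanism is that, via \eqref{eq:BsigmaB}, the two missing terms combine into $\bigl(\omega_{0,2}(z,w)-\omega_{0,2}(z,\sigma(w))\bigr)\,\omega_{g,1+n}(z,z_{[n]})$, whose integral against $\Lambda$ equals, by the \emph{first} \textbf{D2} equation, $\delta_t\bigl(\omega_{0,1}(z)-\omega_{0,1}(\sigma(z))\bigr)\cdot\omega_{g,1+n}(z,z_{[n]})$; matching this against the kernel-denominator variation forces the substitution $\omega_{g,1+n}=(Q^\omega_{g,1+n}-\operatorname{Rec}^\omega_{g,1+n})/(\omega_{0,1}(z)-\omega_{0,1}(\sigma(z)))$ and control of the $Q$-contribution --- again the loop equations. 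So the architecture of your argument is the right one, but without the loop-equation input neither the cancellation nor the reassembly can be closed.
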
\noindent If formula \eqref{eq:varformula} holds for some $t$,  we say that the $\omega_{g,n}$ \textit{satisfy the variational formula with respect to $t$}.

Let us mention that $(\gamma,\Lambda)$ in \cite{Osuga2024} is chosen to be anti-invariant under the involution $\sigma$, whereas we did not impose this condition here. There is no difference between the two in practice. This is because for $2g-2+n\geq1$, $\omega_{g,n+1}(z_0,z_{[n]})+\omega_{g,n+1}(\sigma(z_0),z_{[n]})$ has a zero of order $D_i$ at every $z_0=a_i\in\mathcal{P}$ as shown in \cite[Lemma 3.1]{Osuga2024a}, and thus only the anti-invariant parts contribute to the variational formula. We also  note that not all refined spectral curves satisfy the refined deformation condition, and the parameters $\mu_a$ need to be chosen appropriately. See \cite{Osuga2024} for some concrete examples.

\section{$\bb$-Hurwitz numbers from refined topological recursion}\label{sec:w/o}

In this section, we prove that refined topological recursion can be used to compute $G$-weighted $\bb$-Hurwitz numbers with the  weight 
\begin{equation}\label{weight}
    G(z)=\frac{(u_1+z)(u_2+z)}{(v-z)},
\end{equation} which, as before, is fixed throughout this section. Our approach is to derive a set of  loop equations (using \cref{theo:CDO}) satisfied by certain formal power series, called $\phi_{g,n}$, which are essentially generating series of $G$-weighted $\bb$-Hurwitz numbers (when $\bb=0$ similar approaches are taken in e.g. \cite{KazarianZograf2015}). Then, we will show that these generating functions analytically continue to meromorphic differentials on a certain rational curve. Finally, we show that these analytic continuations coincide with the RTR correlators $\omega_{g,n}$ on a certain refined spectral curve. 

\subsection{Formal loop equations}
We begin by rewriting the constraints \eqref{Dconstraints} on $\tau_G^{{(\bb)}}$ in terms of $F_{g,n}$ (which are the
$\bb$-Hurwitz numbers up to certain rescalings as given in
\eqref{F-H}). Recall that the   $F_{g,n}$ are the expansion coefficients of $\tau_G^{(\bb)}$ as in \eqref{eq:TauFunction}. Define the operator $\nabla(x)$ (sometimes referred to as the \textit{loop insertion operator}) as 
\begin{align*}
	\label{loop insertion}
	\nabla(x) := \sum_{k \geq 1} \frac{k\, dx}{x^{k+1}}\frac{\partial}{\partial \tilde p_k}.
\end{align*}

\begin{definition}
We define the
generating function of $F_{g,n}$ as the following formal series (which is a germ of meromorphic $n$-differentials on the $n$-th product of formal disks centered around $x = \infty$):
\begin{equation*}
    \phi_{g,n}(x_1,\dots,x_n):=\sum_{k_1,\ldots,k_n\geq1}F_{g,n}[k_1,\ldots,k_n] \prod_{i=1}^n \frac{dx_i}{x_i^{k_i+1}} =[\hbar^{2g-2+n}]\left(\prod_{i=1}^n\nabla(x_i)\cdot\log\tau_G^{(\bb)}\right)\Bigg|_{\mathbf{\tilde p} = 0},
 \end{equation*} for any $(g,n) \in \frac{1}{2}\mathbb Z_{\geq 0} \times \mathbb Z_{\geq 1}$.
\end{definition}
For the proofs in this section, it is convenient to add some shifts when $(g,n) = (0,1),(0,\frac{1}{2})$ as follows. 
Define the  shifted generating functions $\phi_{g,n}$ for any $(g,n) \in \frac{1}{2}\mathbb Z_{\geq 0} \times \mathbb Z_{\geq 1}$ as		
\begin{multline}\label{eq:phishifts}
		\tilde 	\phi_{g,n}(x_1,\ldots,x_n) := \phi_{g,n}(x_1,\ldots,x_n) + \delta_{(g,n),(0,1)} \left( \frac{t(u_1+u_2)-v x_1}{2x_1(t+x_1)} dx_1 \right) \\ 
			+   \delta_{(g,n),(\frac{1}{2},1)} \left(\frac{\bb \, dx_1}{2(t+x_1)}+\frac{\bb \,dx_1}{2x_1}\right).
		\end{multline}
Then, we have the following loop equations.

\begin{proposition}[Formal  loop equations]\label{prop:FLE}
For  $(g,n)\in \frac12\mathbb{Z}_{\geq0} \times \mathbb{Z}_{\geq0}$, the  generating functions $\tilde \phi_{g,1+n}(x,x_1,\ldots,x_n)$ satisfy
\begin{align} \label{formalRLE}
 Q^{{\tilde \phi}}_{g,1+n}(x;x_{[n]})= U_{g,1+n}(x,x_{[n]}) + (dx)^2 \sum_{i=1}^n d_{x_i} \left(\frac{x_i(t+x_i)}{x(t+x)}\frac{\tilde \phi_{g,n}(x_{[n]})}{(x-x_i) dx_i}\right),
\end{align} as formal power series in $x^{-1},x_1^{-1},\ldots, x_n^{-1}$ where we assume that $|x| < |x_i|$, $Q^{{\tilde \phi}}_{g,1+n}$ is defined in Remark \ref{rem:formal Q}, and we define
\begin{align}
    U_{0,1}(x):=&\frac{v^2x^2-2t(u_1v+u_2v+2u_1u_2)x+t^2(u_1-u_2)^2}{4x^2(t+x)^2} (dx)^2,\label{U_{0,1}}\\
    U_{\frac12,1}(x):=& -\frac{\bb \, v\,(dx)^2}{2x(t+x)} , \qquad
    U_{1,1}(x):=-\frac{t^2\bb^2\, (dx)^2}{4x^2(t+x)^2},\nonumber
\end{align}
and $U_{g,1+n}(x,x_{[n]}):=0$ for all other $(g,n)$. 
\end{proposition}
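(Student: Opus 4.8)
The plan is to turn the operator constraints $D_k\tau_G^{(\bb)}=0$ of \cref{theo:CDO} into the loop equations \eqref{formalRLE} by repackaging the modes $\mathsf{J}_k$ into a single current and then applying the loop insertion operator $\nabla$. First I would divide the constraint \eqref{Dconstraints} by $\tau_G^{(\bb)}$ and pass to $\mathcal{F}:=\log\tau_G^{(\bb)}$. Since $\mathsf{J}_k=\hbar k\,\partial_{\tilde p_k}$ is a derivation for $k>0$ and multiplication by $\hbar\tilde p_{-k}$ for $k<0$, each quadratic block of \eqref{Voperator} satisfies
\begin{equation*}
\frac{\mathsf{J}_a\mathsf{J}_b\,\tau_G^{(\bb)}}{\tau_G^{(\bb)}}=(\mathsf{J}_a\mathcal{F})(\mathsf{J}_b\mathcal{F})+\mathsf{J}_a\mathsf{J}_b\mathcal{F},
\end{equation*}
up to the Heisenberg central term $[\mathsf{J}_a,\mathsf{J}_b]=\hbar^2 a\,\delta_{a+b,0}$ that must be tracked when reordering creation and annihilation modes. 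This rewrites each $D_k$ as a \emph{disconnected} piece (a product of one-point quantities), a \emph{connected} piece (a single second derivative of $\mathcal{F}$), and explicit central and linear remainders.

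Next I would resum over $k\ge 0$. Introducing the current $J(x):=\sum_{k\in\mathbb{Z}}\mathsf{J}_k\,x^{-k-1}$ with positive part $J_+(x):=\sum_{k\geq1}\mathsf{J}_k\,x^{-k-1}$, one checks that the two quadratic summands of \eqref{Voperator} (label the five summands of $D_k$ as $\mathrm A$--$\mathrm E$ in order) are modes of $J(x)J_+(x)$: the coefficient-$t$ block $\mathrm A$ is $[x^{-k-2}]J(x)J_+(x)$ and the block $\mathrm D$ is $[x^{-k-3}]J(x)J_+(x)$, so that $\sum_{k\ge 0}(t\,\mathrm A+\mathrm D)\,x^{-k-2}$ assembles into $(t+x)\,J(x)J_+(x)$. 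This is the origin of the factor $(t+x)$, and hence of the denominators $x(t+x)$ and $x_i(t+x_i)$, in \eqref{formalRLE}. The linear blocks $\mathrm B,\mathrm E$ resum into $J(x)$ with coefficients $t(u_1+u_2)$, $-v$ and the $\bb$-weighted pieces; the $-\hbar\bb k$ factors turn into an $x$-derivative of the current (producing $\bb\,d_x(\cdot)$) and, carrying one extra power of $\hbar$, shift the genus by $\tfrac12$, reproducing the term $\bb\,d_x\frac{\phi_{g-\frac12,n+1}}{dx}$ in $Q^{\tilde\phi}$. The constant $tu_1u_2\delta_{k,0}$ contributes the $x^{-2}$ term feeding into $U_{0,1}$.

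I would then apply $\prod_{i=1}^n\nabla(x_i)$ to the resummed identity, which is legitimate since it holds identically in the $\tilde p$, and set $\tilde p=0$. Using $\nabla(x_i)=\frac{dx_i}{\hbar}J_+(x_i)$, the disconnected quadratic piece yields the bilinear sum $\sum_{g_1+g_2=g,\,J_1\sqcup J_2=[n]}\phi_{g_1,1+|J_1|}(x,x_{J_1})\phi_{g_2,1+|J_2|}(x,x_{J_2})$ (both factors evaluated at $x$), while the connected piece yields the coincident double point $\phi_{g-1,2+n}(x,x,x_{[n]})$; here it is essential that the resummation produces $J(x)J_+(x)$ with \emph{independent} modes rather than a convolution. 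The contraction of each $\nabla(x_i)$ against the creation part $J_-(x)$ produces the kernel $\nabla(x_i)J_-(x)=\frac{\hbar\,dx_i}{(x-x_i)^2}$. Extracting $[\hbar^{2g-2+n}]$ then reproduces $Q^{\tilde\phi}_{g,1+n}$ (in the formal sense) on the left. The contraction kernels must be split by partial fractions into a part singular at $x=x_i$, absorbed into the $\omega_{0,2}$-type term inside $Q^{\tilde\phi}$, and a part regular at $x=x_i$, which combines with the $(t+x)$ factor to give the right-hand variation term $(dx)^2\sum_i d_{x_i}\bigl(\tfrac{x_i(t+x_i)}{x(t+x)}\tfrac{\tilde\phi_{g,n}(x_{[n]})}{(x-x_i)\,dx_i}\bigr)$.

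Finally I would verify the shifts and the explicit $U_{g,n}$. The shift terms in \eqref{eq:phishifts} are exactly the one-point contributions of the linear and constant blocks (the $t(u_1+u_2)$, $-v$, $tu_1u_2$ and $\bb$ pieces), so replacing $\phi$ by $\tilde\phi$ removes these inhomogeneous remainders and isolates the rational differentials $U_{g,n}$; at $(g,n)=(0,1)$ the equation reduces to $\tilde\phi_{0,1}^2=U_{0,1}$, the classical spectral curve relation, and $U_{\frac12,1}$, $U_{1,1}$ emerge as the $\bb$- and $\bb^2$-corrections from the $-\hbar\bb k$ terms acting on the shift. I expect the main obstacle to lie in the bookkeeping of the middle two steps: simultaneously tracking the Heisenberg central terms through the resummation so that they assemble into the finite rational $U_{g,n}$, maintaining the $\hbar$-grading that cleanly separates integer from half-integer genus, and carrying out the partial-fraction split of the contraction kernels so that the diagonal part lands in $Q^{\tilde\phi}$ while the off-diagonal part reproduces exactly the coefficient $\frac{x_i(t+x_i)}{x(t+x)}$ on the right. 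Checking that all formal expansions are consistent in the prescribed region $|x|<|x_i|$ is a further point requiring care.
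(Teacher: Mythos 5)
Your overall strategy is the same as the paper's: divide the constraints of \cref{theo:CDO} by $\tau_G^{(\bb)}$, apply $\prod_{i=1}^n\nabla(x_i)$, extract the right power of $\hbar$, and set $\tilde{\pp}=0$. Your treatment of the linear blocks, of the constant term, of the $\hbar$-grading (the $-\hbar\bb k$ factors shifting the genus by $\tfrac12$), and your computation of the contraction kernel $\nabla(x_i)J_-(x)=\frac{\hbar\,dx_i}{(x-x_i)^2}$ are all correct. (The worry about Heisenberg central terms is moot: both \eqref{Voperator} and $J(x)J_+(x)$ are already normally ordered, so no reordering is ever needed.) The genuine gap is in the resummation step. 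The claim that $\sum_{k\geq0}\bigl(t\,\mathrm{A}_k+\mathrm{D}_k\bigr)x^{-k-2}$ ``assembles into $(t+x)J(x)J_+(x)$'' is false: summing the modes $[x^{-k-2}]J(x)J_+(x)$ over $k\geq0$ only reconstructs the projection of $J(x)J_+(x)$ onto the powers $x^{-2},x^{-3},\dots$, and silently discards the terms of $J(x)J_+(x)$ carrying powers $x^{-1},x^{0},x^{1},\dots$, namely $\J_{-k-j}\J_j\,x^{k-2}$ with $k,j>0$ (and the analogue for your block $\mathrm{D}$). These discarded terms are not negligible: after contraction with the $\nabla(x_i)$ and geometric resummation in the regime $|x|<|x_i|$ they give exactly
\begin{equation*}
-(dx)^2\sum_{i=1}^n d_{x_i}\left(\frac{x_i}{x(x-x_i)}\,\frac{\phi_{g,n}(x_{[n]})}{dx_i}\right),
\end{equation*}
which is the block-$\mathrm{A}$ contribution entering with weight $t$; the block-$\mathrm{D}$ corrections give the same expression with $t$ replaced by $x_i$, and together they build the right-hand side of \eqref{formalRLE} with its numerator $x_i(t+x_i)$ after dividing by $(t+x)$. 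This is precisely the content of the three-term decomposition \eqref{eq:JJterms} in the paper's proof. Taken at face value, your resummed identity would yield a loop equation with no right-hand side at all, contradicting the statement being proved.

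The repair you propose --- splitting the contraction kernel $\frac{dx_i}{(x-x_i)^2}$ ``by partial fractions'' into a part singular and a part regular at $x=x_i$ --- cannot generate the missing term. That kernel enters $Q^{\tilde\phi}_{g,1+n}$ wholesale, multiplying $\phi_{g,n}(x,x_{[n]}\setminus\{x_i\})$ whose first argument is $x$; the right-hand side of \eqref{formalRLE}, by contrast, involves $\tilde\phi_{g,n}(x_{[n]})$ evaluated at $x_i$, sitting under a total derivative $d_{x_i}$. No manipulation of the kernel in the variable $x$ can move the argument of $\phi_{g,n}$ from $x$ to $x_i$ or create that derivative. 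The change of argument happens in the mode bookkeeping you skipped: in the discarded terms $\J_{-k-j}\J_j$, the creation mode $\J_{-k-j}$ is contracted with $\nabla(x_i)$ (producing $\hbar(k+j)\,x_i^{-k-j-1}dx_i$, which is where $d_{x_i}$ comes from), while the annihilation mode $\J_j$ acts on $\log\tau_G^{(\bb)}$ (producing $F_{g,n}[j,\dots]$), and the two indices are then resummed against each other in $x_i$. To fix the proof, replace your resummation claim by the exact decomposition \eqref{eq:JJterms} (and its block-$\mathrm{D}$ analogue), compute the three pieces separately as the paper does, and only then apply the shifts \eqref{eq:phishifts} and divide by $(t+x)$.
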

\begin{proof}
The statement is a consequence of the constraints of \cref{theo:CDO} that characterize the generating function $ \tau_G^{(\bb)}$. More precisely, equation \eqref{formalRLE} is a rewriting of 
\begin{equation}\label{eq:LE1}
    [\hbar^{2g+n}]  \left(\prod_{i=1}^n\nabla(x_i)\left(\frac{1}{\tau_G^{(\bb)}}\sum_{k\geq0}\frac{D_k (dx)^2 }{x^{k+2}} \tau_G^{(\bb)} \right)\right)\Bigg|_{\mathbf{ \tilde p}=0}=0.
\end{equation} We will give a  brief proof as the derivation of loop equations from differential constraints arising from $\mathcal W$-algebra representations has been worked out in various recent papers (see for instance \cite[Lemma 5.4]{BorotBouchardChidambaramCreutzig2024}, \cite[Section 4]{BorotKramerSchuler2024} or \cite[Section 2]{BorotBouchardChidambaramCreutzigNoshchenko2024}). Note that the operator $D_k$ \eqref{Voperator} is a sum of terms that are either linear or quadratic in the $\mathsf J_i$. Computing the contribution from the linear terms is  straightforward. For instance, we get
\[
	[\hbar^{2g+n}]  \left(\prod_{i=1}^n\nabla(x_i) \left(\frac{1}{\tau_G^{(\bb)}} \sum_{k > 0} \frac{\J_k dx}{x^{k+1}} \tau_G^{{(\bb)}}\right)\right)\Bigg|_{\mathbf{ \tilde p}=0}=  \phi_{g,1+n} (x,x_{[n]})
\] 
Let us compute the contributions coming from  the quadratic term $\sum_{j\geq 1} \J_{k-j} \J_j$  which appears in the operator $D_k$. First, we rewrite this term as
\begin{multline}\label{eq:JJterms}
	 \frac{1}{\tau^{(\bb)}_G} \sum_{k\geq 0} \sum_{j\geq 1} \frac{\J_{k-j}}{x^{k-j+1}}  \frac{\J_j}{x^{j+1}} \tau^{(\bb)}_G  \\ 
	 =  
	 \frac{1}{\tau^{(\bb)}_G} \left( \sum_{k ,j > 0} \frac{\J_{k}}{x^{k+1}}  \frac{\J_j}{x^{j+1}} + \sum_{k ,j> 0}  \J_{-k} x^{k-1} \frac{\J_j}{x^{j+1}} - \sum_{k,j > 0}  \J_{-k-j} x^{k-2} \J_j \right) \tau^{(\bb)}_G 
\end{multline} Applying the operator $\prod_{i=1}^n\nabla(x_i)$, extracting the power of $\hbar^{2g+n}$, and setting $\mathbf{\tilde p} = 0$, we get the following contributions from the three terms in the RHS of equation \eqref{eq:JJterms}. The third term is the most complicated to analyze, so let us start with that:
\begin{align*}
	- [\hbar^{2g+n}]  & \left(\prod_{i=1}^n\nabla(x_i)  \left( \frac{1}{\tau_G^{(\bb)}} \sum_{k ,j > 0} (dx)^2 \J_{-k-j} x^{k-2}  \J_j \tau_G^{{(\bb)}} \right) \right) \Bigg|_{\mathbf{ \tilde p}=0} \\
	&= - (dx)^2 \sum_{i=1}^n \sum_{k,j>0} \frac{(k+j) x^{k-2}}{x_i^k} \sum_{k_1,\cdots,k_{n-1}>0} \frac{F_{g,n}[j,k_1,\cdots,k_{n-1}]}{x_i^{j+1}} \prod_{\substack{1\leq \ell \leq n \\ \ell \neq i}} \frac{dx_\ell }{x^{k_{\ell - \delta_{\ell \geq i}}}_\ell} \\
	&= 	- (dx)^2 \sum_{i=1}^n d_{x_i} \left(\frac{x_i}{x(x-x_i)} \frac{ \phi_{g,n}(x_{[n]})}{dx_i} \right).
\end{align*} In order to get the last line, we use the formula for the sum of a geometric series in the regime $|x|<|x_i|$. The first term gives 
\begin{multline}\label{1st term in JJ}
	[\hbar^{2g+n}]  \left(\prod_{i=1}^n\nabla(x_i)\left(\frac{1}{\tau_G^{{(\bb)}}} \sum_{k > 0} \sum_{j\geq 1} \frac{\J_{k-j} \, dx}{x^{k-j+1}}  \frac{\J_j \, dx}{x^{j+1}} \tau_G^{{(\bb)}}\right)\right)\Bigg|_{\mathbf{ \tilde p}=0}
	 =\\
	   \sum_{\substack{g_1+g_2 = g \\J_1\sqcup J_2=[n]}}  \phi_{g_1,1+|J_1|}(x,x_{J_1})  \phi_{g_2,1+|J_2|}(x,x_{J_2}) 
	+  \phi_{g-1,2+n}(x,x,x_{[n]}),
\end{multline}
while the second term gives
\begin{multline*}
		[\hbar^{2g+n}]  \left(\prod_{i=1}^n\nabla(x_i)\left(\frac{1}{\tau_G^{(\bb)}} \sum_{k > 0} \sum_{j\geq 1}  \J_{-k} x^{k-1} dx \frac{\J_j\, dx}{x^{j+1}} \tau_G^{{(\bb)}}\right)\right)\Bigg|_{\mathbf{ \tilde p}=0}
		 = 	\sum_{i=1}^n \sum_{k>0} \frac{ k x^{k-1} dx dx_i}{x_i^{k+1}}  \phi_{g,n}(x,x_{[n]}\setminus \{x_i\}) 
		\\ = \sum_{i=1}^n\frac{ \phi_{g,n}(x,x_{[n]} \setminus \{x_i\})\, dx dx_i}{(x-x_i)^2}.
\end{multline*}
Using the above computations, equation \eqref{eq:LE1} can be converted into the following form 
\begin{multline*} 
	(t+x)\left( Q^{{\phi}}_{g,1+n}(x;x_{[n]}) \right)
	 +\left(\frac{t(u_1+u_2)}{x} -v\right) dx \,  \phi_{g,1+n}(x,x_{[n]})  +  \frac{tu_1u_2 (dx)^2}{x^2} \delta_{g,0}\delta_{n,0} +  \\
	   \left(\frac{t  \bb}{x} + 2 \bb \right) dx  \phi_{g-\frac12,1+n}(x,x_{[n]}) 
	= (dx)^2 \sum_{i=1}^n d_{x_i} \left(\frac{x_i(t+x_i)}{x(x-x_i)} \frac{ \phi_{g,n}(x_{[n]})}{dx_i}\right).
\end{multline*}  
Finally, we obtain \eqref{formalRLE} after applying the shifts \eqref{eq:phishifts} and dividing by $(t+x)$
\end{proof} 

	It is worth noting that the loop equations of \cref{prop:FLE} are not merely constraints on the behavior of the generating functions $\tilde \phi_{g,1+n}$ (as is often the case for loop equations that one typically encounters in the topological recursion literature). Rather, our loop equations provide an explicit formula for the  $\tilde \phi_{g,1+n}$ in terms of $\tilde \phi_{g',1+n'}$ where $2g'-2+n' < 2g-2+n$\footnote{Such a simplification holds only in restricted cases; for instance, loop equations become more complicated once internal faces are taken into account}. More concretely,
		\begin{corollary}\label{cor:FLE}
		For  $(g,n)\in \frac12\mathbb{Z}_{\geq0} \times \mathbb{Z}_{\geq 0}$ except $(g,n) = (0,0)$, the  generating function $\tilde \phi_{g,1+n}(x,x_1,\ldots,x_n)$ can be expressed as 
		\begin{multline*}
			\tilde \phi_{g,1+n}(x,x_{[n]}) = \\
			- \frac{ \operatorname{Rec}^{\tilde \phi}_{g,1+n}(x;x_{[n]})}{2 \tilde \phi_{0,1}(x)}   
			+ \frac{ 1}{2 \tilde \phi_{0,1}(x)} \left(U_{g,1+n}(x,x_{[n]}) + (dx)^2 \sum_{i=1}^n d_{x_i} \left(\frac{x_i(t+x_i)}{x(t+x)}\frac{\tilde \phi_{g,n}(x_{[n]})}{(x-x_i) dx_i}\right)\right),
			\end{multline*} 
      as formal power series in $x^{-1},x_1^{-1},\ldots, x_n^{-1}$ where we assume that $|x| < |x_i|$ and $\operatorname{Rec}^{\tilde \phi}_{g,1+n}$ is defined in Remark \ref{rem:formal Q}.
	\end{corollary}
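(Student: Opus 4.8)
The plan is to obtain \cref{cor:FLE} as a purely algebraic rearrangement of the formal loop equation \eqref{formalRLE} proved in \cref{prop:FLE}. The essential point is to isolate, inside $Q^{\tilde\phi}_{g,1+n}$, exactly the contribution that is proportional to the unknown $\tilde\phi_{g,1+n}$ itself, and to recognize everything else as $\operatorname{Rec}^{\tilde\phi}_{g,1+n}$. Once this is done, solving for $\tilde\phi_{g,1+n}$ is immediate.

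First I would unwind the definitions of $Q^{\tilde\phi}$ and $\operatorname{Rec}^{\tilde\phi}$ from \eqref{eq:Recdef} together with \cref{rem:formal Q}. By construction, $\operatorname{Rec}^{\tilde\phi}_{g,1+n}$ is obtained from $Q^{\tilde\phi}_{g,1+n}$ by deleting precisely the two summands of the quadratic sum indexed by $(g_1,J_1)=(g,[n])$ and by $(g_2,J_2)=(g,[n])$. In either of these summands the complementary index is forced to be $(0,\emptyset)$, so the corresponding factor is the unstable correlator $\tilde\phi_{0,1}(x)$, which in the formal setting is kept as is, without antisymmetrization (\cref{rem:formal Q}). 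Hence each of the two deleted terms equals $\tilde\phi_{0,1}(x)\,\tilde\phi_{g,1+n}(x,x_{[n]})$, and I obtain the identity
\begin{equation*}
    Q^{\tilde\phi}_{g,1+n}(x;x_{[n]}) = \operatorname{Rec}^{\tilde\phi}_{g,1+n}(x;x_{[n]}) + 2\,\tilde\phi_{0,1}(x)\,\tilde\phi_{g,1+n}(x,x_{[n]}).
\end{equation*}
The hypothesis $(g,n)\neq(0,0)$ is exactly what guarantees that these two index choices are genuinely distinct and contribute two separate terms; for $(g,n)=(0,0)$ they coincide, the factor of $2$ is lost, and the argument degenerates, which is precisely why this case must be excluded.

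Substituting this identity into \eqref{formalRLE}, transposing $\operatorname{Rec}^{\tilde\phi}_{g,1+n}$ to the right-hand side, and dividing by $2\tilde\phi_{0,1}(x)$ would then reproduce the claimed formula verbatim. The only nontrivial point, which I expect to be the main (and mild) obstacle, is to justify that division by $\tilde\phi_{0,1}(x)$ is legitimate within the ring of formal Laurent series in $x^{-1}$. For this I would invoke the $(g,n)=(0,0)$ instance of \cref{prop:FLE}, which reads $\tilde\phi_{0,1}(x)^2 = U_{0,1}(x)$: inspecting the explicit expression \eqref{U_{0,1}} shows that $\tilde\phi_{0,1}(x)$ has an invertible leading coefficient as a germ near $x=\infty$, so its reciprocal exists as a formal series and the division is well defined. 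With this verified, the algebraic rearrangement goes through and completes the proof.
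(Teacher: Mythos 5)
Your proposal is correct and takes essentially the same route as the paper, which simply declares the corollary a direct consequence of \cref{prop:FLE}: the content is exactly the algebraic rearrangement you perform, namely writing $Q^{\tilde\phi}_{g,1+n} = \operatorname{Rec}^{\tilde\phi}_{g,1+n} + 2\,\tilde\phi_{0,1}\,\tilde\phi_{g,1+n}$ for $(g,n)\neq(0,0)$ and dividing. Your two supporting observations --- that the two excluded summands are distinct precisely when $(g,n)\neq(0,0)$, and that $\tilde\phi_{0,1}$ is invertible as a formal Laurent series since $\tilde\phi_{0,1}^2=U_{0,1}$ forces the nonzero leading term $-\tfrac{v}{2x}\,dx$ --- are exactly the details the paper leaves implicit, and both are correct.
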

	\begin{proof}
		This is a direct corollary of the formal loop equations of \cref{prop:FLE}.
		\end{proof}

\subsection{The spectral curve} \label{sec:spectral curve}
Recall that we fix the weight $G(z)$ as in \eqref{weight}, and let us introduce the associated spectral curve. Set $\Sigma=\mathbb{P}^1_z$, and define two meromorphic functions $x,y$ on it as
\begin{align}  \label{x,y w/o}
	x(z)=t\frac{G(z)}{z},\quad 
	y(z)=\frac{z}{x(z)}.
\end{align} We also define $\tilde y(z) = y(z)+\frac{u_1+u_2}{2x(z)}-\frac{u_1+u_2+v}{2(t+x(z))}$ so that $\tilde y$ becomes anti-invariant under the involution $\sigma$, and the functions $x,\tilde y$  satisfy the following polynomial equation
\begin{equation}
	\left(\tilde y(z)dx(z)\right)^2-U_{0,1}(x(z))=0.\label{curve w/o}
\end{equation}
The set of poles of $\tilde y(z)dx(z)  = \frac{1}{2}\left(y(z) - y(\sigma(z))\right) dx(z)$ can easily be computed as
\begin{equation}\label{eq:P}
	\mathcal{P}=\left\{0,v\right\}\cup\left\{-u_1,-u_2\right\}\cup\left\{-\frac{u_1u_2}{u_1+u_2+v},\infty\right\},
\end{equation}
which are the two preimages of $x=\infty,0,-t$ respectively. 
For convenience\footnote{It does not matter which preimages of $x=0,-t$ we choose to be in $\mathcal{P}_+$ because the associated $\mu_a$ are zero. On the other hand, some of our statements are sensitive to the choice  $0\in\mathcal{P}_+$. If we choose $v$ to be in $\mathcal{P}_+$ instead,  equivalent results would hold with appropriately modified conventions.}, we choose $\mathcal{P}_+=\{0,-u_1,-\frac{u_1u_2}{u_1+u_2+v}\}$ and ${\bm \mu}=\{-1,0,0\}$. These data define a refined spectral curve $\mathcal{S}_{\bm \mu}$. In particular, we have  $\omega_{0,1}(z) = y(z) dx(z)$ and $\omega_{0,2}(z_1,z_2) = -B(z_1,\sigma(z_2))$, and
\begin{equation}
	\omega_{\frac12,1}(z)=\frac{\bb}{2}\left(-\frac{d\tilde y(z)}{\tilde y(z)}-\eta^0(z)\right). \label{1/2,1 w/o}
\end{equation}

Recall that the differentials $\phi_{g,n}$ were defined on a formal disk centered at $x = \infty$. Let us identify this disk with a formal neighborhood of the point $z =0 $ (where $x(z)=\infty$) on the curve $\Sigma$. In the following lemma, we show that the unstable terms $\phi_{0,1}, \phi_{0,2}, \tilde  \phi_{\frac{1}{2},1}$ analytically continue to the meromorphic differentials $\omega_{0,1}, \omega_{0,2}, \omega_{\frac{1}{2},1} $ respectively on the above curve $\Sigma$. 

\begin{lemma}\label{lem:unstable}
	When $(g,n) = (0,1), (0,2)$ or $(\frac{1}{2},1)$, the differentials $\phi_{g,n}$ analytically continue to meromorphic differentials on the refined spectral curve $\mathcal S_{\bm \mu}$ under the identification $x = x(z)$. Moreover, the analytic continuations of $\phi_{0,1} $ and $\phi_{0,2}$ coincide with the  unstable RTR correlators $\omega_{0,1} $ and $\omega_{0,2}$ respectively, while for $(g,n) = (\frac{1}{2},1)$, we have
	\[
		\omega_{\frac{1}{2},1}(z) - \bb dx(z) \left( \frac{1}{2(t+x(z))}+ \frac{1}{2x(z)}  \right) = \phi_{\frac{1}{2},1}(x(z)).
	\] 
\end{lemma}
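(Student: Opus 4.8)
The plan is to treat the three cases separately, solving in each the relevant formal loop equation of \cref{prop:FLE} explicitly and then recognizing the solution as the corresponding unstable RTR correlator. Throughout I use that the shift in \eqref{eq:phishifts} is engineered so that $\tilde\phi_{0,1}$ should match the \emph{antisymmetric part} $\tilde y\,dx=\tfrac12\bigl(\omega_{0,1}(z)-\omega_{0,1}(\sigma(z))\bigr)$ of $\omega_{0,1}$, which is exactly the quantity entering the refined recursion after the antisymmetrization prescribed in \eqref{def02} and \cref{rem:formal Q}.

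First, for $(g,n)=(0,1)$ I would specialize \cref{prop:FLE} to $n=0$. Since $\tilde\phi_{-1,2}=\tilde\phi_{-\frac12,1}=0$, the only surviving term is $Q^{\tilde\phi}_{0,1}(x)=\tilde\phi_{0,1}(x)^2$, so the loop equation reads $\tilde\phi_{0,1}(x)^2=U_{0,1}(x)$. On the other hand the spectral curve is defined precisely by $(\tilde y\,dx)^2=U_{0,1}(x(z))$. Hence $\tilde\phi_{0,1}$ and $\tilde y\,dx$ are the two square roots of the same expression as power series in $x^{-1}$ near $z=0$; comparing leading coefficients (both behave like $-\tfrac{v}{2x}\,dx$ as $x\to\infty$) fixes the sign, giving $\tilde\phi_{0,1}=\tilde y\,dx$ as a meromorphic differential on $\Sigma$. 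A short computation shows that the additive shift in \eqref{eq:phishifts} equals $(\tilde y-y)\,dx$, so unwinding it yields $\phi_{0,1}=y\,dx=\omega_{0,1}$.

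Next, for $(g,n)=(0,2)$ and $(\tfrac12,1)$ the loop equations are \emph{linear} in the unknown $\tilde\phi_{0,2}$, resp. $\tilde\phi_{\frac12,1}$, with all other terms built from $\tilde\phi_{0,1}$; equivalently, \cref{cor:FLE} expresses each of them as an explicit rational expression in $x,x_1$ and $\tilde\phi_{0,1}$. Having already identified $\tilde\phi_{0,1}$ with $\tilde y\,dx$, the right-hand sides become rational in $z$ (resp.\ $z,z_1$), so $\tilde\phi_{0,2}$ and $\tilde\phi_{\frac12,1}$ automatically analytically continue to meromorphic (bi)differentials on $\Sigma$; it then remains to recognize these continuations. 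For $(\tfrac12,1)$, solving $2\tilde\phi_{0,1}\,\tilde\phi_{\frac12,1}+\bb\,d(\tilde\phi_{0,1}/dx)=U_{\frac12,1}$ with $\tilde\phi_{0,1}=\tilde y\,dx$ gives $\tilde\phi_{\frac12,1}=-\tfrac{\bb}{2}\tfrac{d\tilde y}{\tilde y}-\tfrac{\bb v}{4\tilde y\,x(t+x)}\,dx$; comparing with the curve formula $\omega_{\frac12,1}=\tfrac{\bb}{2}\!\left(-\tfrac{d\tilde y}{\tilde y}-\eta^0\right)$, the first terms agree identically and the claim reduces to the identity $\eta^0(z)=\tfrac{v\,dx}{2\tilde y\,x(t+x)}$. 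For $(0,2)$ there is no shift, so $\phi_{0,2}=\tilde\phi_{0,2}$; plugging $\tilde\phi_{0,1}=\tilde y\,dx$ into \cref{cor:FLE} produces a rational bidifferential which I would compare with $\omega_{0,2}=-B(z_1,\sigma(z_2))$ of \eqref{def02}. As a consistency check on the index range, note that $\sigma(0)=v\ne0$, so $-B(z_1,\sigma(z_2))$ is holomorphic at $(z_1,z_2)=(0,0)$, and since the local coordinate satisfies $z\sim c\,x^{-1}$ near $z=0$ for a nonzero constant $c$, its expansion there contains only powers $x_1^{-k_1-1}x_2^{-k_2-1}$ with $k_i\ge1$, exactly matching the shape of $\phi_{0,2}$.

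The main obstacle is the recognition step in these last two cases: identifying the rational expressions produced by \cref{cor:FLE} with the abstractly defined objects $\eta^0$ and $B$. Both identifications come down to comparing meromorphic differentials on $\mathbb{P}^1$, which are determined by their principal parts since there are no nonzero holomorphic differentials on $\mathbb{P}^1$; concretely one must locate all poles (at $\mathcal R$, at $\mathcal P$, and along the diagonal/anti-diagonal $x=x_1$), match residues, and use \eqref{eq:BsigmaB} together with $\sigma(0)=v$ to check that the spurious diagonal pole cancels, leaving only the anti-diagonal pole of $-B(z_1,\sigma(z_2))$. Verifying the $\eta^0$-identity in the $(\tfrac12,1)$ case is the most delicate point, since $\eta^0$ is defined by the contour integral \eqref{eq:etadef} rather than by a closed formula; one therefore checks instead that the proposed right-hand side is a differential with simple poles of residue $\pm1$ at exactly $z=0$ and $z=v=\sigma(0)$ and no others, which characterizes $\eta^0$ uniquely.
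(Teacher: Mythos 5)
Your proof is correct. For $(g,n)=(\tfrac12,1)$ it is essentially the paper's own argument: solve the linear loop equation $2\tilde\phi_{0,1}\,\tilde\phi_{\frac12,1}+\bb\,dx\,d\bigl(\tilde\phi_{0,1}/dx\bigr)=U_{\frac12,1}$ using $\tilde\phi_{0,1}=\tilde y\,dx$, and reduce the identification with $\omega_{\frac12,1}$ to the closed formula $\eta^0(z)=\frac{v\,dx(z)}{2\tilde y(z)x(z)(t+x(z))}$, verified by pole and residue considerations on $\mathbb{P}^1$; this is precisely \eqref{eta0} and the surrounding computation in the paper's proof. Where you genuinely diverge is the genus-zero sector. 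The paper dispatches $(0,1)$ and $(0,2)$ by citing \cite{BychkovDuninBarkowskiKazarianShadrin2020}: the $\bb$-dependent terms of the loop equations involve $\phi_{g-\frac12,n+1}$ and hence vanish in genus zero, so the unrefined results apply verbatim, with the convention $\omega_{0,2}=-B(z_1,\sigma(z_2))$ of \eqref{def02} absorbing, via \eqref{eq:BsigmaB}, the shift by $\frac{dx_1dx_2}{(x_1-x_2)^2}$ appearing there. You instead argue self-containedly: your square-root argument for $(0,1)$, with the sign fixed by the common leading behavior $-\frac{v}{2x}\,dx$, is complete; for $(0,2)$ you solve \cref{cor:FLE} and propose to recognize the resulting rational bidifferential as $-B(z_1,\sigma(z_2))$ by matching principal parts. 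That recognition does go through, and in fact more efficiently than by pole-chasing: by \cref{lem:kernel} the inhomogeneous term equals $d_{z_1}\bigl(\tilde y(z)dx(z)\,\eta^{z_1}(z)\bigr)=\tilde y(z)dx(z)\bigl(B(z,z_1)-B(z,\sigma(z_1))\bigr)$, so \cref{cor:FLE} yields $\tilde\phi_{0,2}=\tfrac12\bigl(B(z,z_1)-B(z,\sigma(z_1))\bigr)-\tfrac12\,\frac{dx(z)dx(z_1)}{(x(z)-x(z_1))^2}=-B(z,\sigma(z_1))$ by \eqref{eq:BsigmaB}. Your route buys independence from the unrefined literature at the cost of this extra (routine) computation. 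One point to tighten: characterizing $\eta^0$ by ``simple poles of residue $\pm1$ at $z=0$ and $z=v$ and no others'' only determines it up to an overall sign; you need residue exactly $+1$ at $z=0$, which follows from $\tilde y\sim-\frac{v}{2x}$ and $\frac{dx}{x}\sim-\frac{dz}{z}$ near $z=0$, so that the proposed right-hand side behaves as $+\frac{dz}{z}$ there.
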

\begin{proof}
	As refinement does not affect  genus $0$
        correlators, the cases of $(g,n) = (0,1), (0,2)$ are covered
        by \cite{BychkovDuninBarkowskiKazarianShadrin2024}. In particular, the equation satisfied by $ \tilde y dx $ \eqref{curve w/o} agrees with the loop equation of \cref{prop:FLE} when $(g,n) = (0,0)$ under the identification $x = x(z)$. This means that $\tilde{\phi}_{0,1}$ analytically continues to $\tilde y dx(z)$ which is anti-invariant under the involution $\sigma$. The shifts required to obtain $y$ from $\tilde y$ and $\phi_{0,1}$ from $\tilde \phi_{0,1} $ cancel each other, and thus $ \phi_{0,1}$ analytically continues to $ \omega_{0,1} = y dx$. Also,  by definition $\omega_{0,2}(z_1,z_2)=-B(z_1,\sigma(z_2))$ instead of $B(z_1,z_2)$, hence we do not need to shift $\omega_{0,2}$ in our convention, in contrast to \cite{BychkovDuninBarkowskiKazarianShadrin2024}. 
	
	 Let us turn to $\phi_{\frac{1}{2},1}$ now. The loop equation of \cref{prop:FLE} when $ (g,n) = (\frac{1}{2},0)$ reads
	 	\begin{equation*}
	 		2 \tilde  \phi_{0,1}(x) \tilde  \phi_{\frac{1}{2},1}(x) + \bb dx  d_{x} \frac{\tilde \phi_{0,1}(x)}{dx} = - \bb \frac{ v\,(dx)^2}{2x(t+x)}. 
	 	\end{equation*}
	  As we already know that $\tilde \phi_{0,1} $ analytically continues to the curve $\Sigma$ under the identification $x = x(z)$, the above equation implies that $ \tilde  \phi_{\frac{1}{2},1}(x)$ also admits an analytic continuation.
	  
	  Let us now show that this analytic continuation is precisely $\omega_{\frac{1}{2},1}(z)$. First, note that $\eta^0$ (defined in \eqref{eq:etadef}) can be expressed in terms of $x$ and $\tilde y$ as
	\begin{align}
		\eta^0(z )=\frac{v\,dx(z)}{2\tilde y(z)x(z)(t+x(z))}\label{eta0}.
	\end{align}
	One can derive this by considering the pole structure and the anti-invariance with respect to the involution $\sigma$. More concretely, \eqref{curve w/o} implies that the only zeroes of the combination $\tilde y(z)x(z)(t+x(z))$ are of order 1 at ramification points and its only poles are of order 1 at $z=0,v$. In particular, it grows as $\frac{v}{2}x+\mathcal{O}(1)$ as $z\to0$, and this ensures that the RHS of \eqref{eta0} has the same pole structure as $\eta^0$ giving  \eqref{eta0}. Plugging this into the definition of $\omega_{\frac12,1}$ \eqref{eq:1/2def} gives
	\begin{equation*}
		2 \tilde y(z) dx(z)\omega_{\frac12,1}(z)+\bb dx(z)d\tilde y(z)=-\bb\frac{v\,dx(z)dx(z)}{2x(z)(t+x(z))}.
	\end{equation*}
	which matches the loop equation for $ \tilde\phi_{\frac12,1}(x)$ under the identification $x=x(z)$. Expressing $\phi_{\frac{1}{2},1} $ in terms of $\tilde \phi_{\frac{1}{2},1}  $ gives the result.
\end{proof}

\subsection{Refined topological recursion}
Finally, we prove that all  the  differentials $\phi_{g,n}$ analytically continue to the curve $\Sigma$ and, in addition, match the RTR correlators $\omega_{g,n}$ on the refined spectral curve $\mathcal S_{\bm \mu}$ of \cref{sec:spectral curve}. Let us first introduce a useful lemma:
\begin{lemma}\label{lem:kernel} We have
\begin{equation}
    \tilde y(z) dx(z) \frac{\eta^{z_i}(z) }{\tilde y(z_i) dx(z_i) } = \frac{1}{x(z)-x(z_i)}\frac{x(z_i)(t+x(z_i))}{x(z)(t+x(z))}\frac{dx(z)^2}{dx(z_i)}.\label{kernel}
\end{equation} 
\end{lemma}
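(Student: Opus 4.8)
The plan is to rewrite the claimed identity as an equality of meromorphic $1$-forms in $z$ (for generic fixed $z_i$) and then invoke the vanishing of holomorphic $1$-forms on $\Sigma=\mathbb{P}^1$. Dividing both sides of \eqref{kernel} by $\tilde y(z)dx(z)/(\tilde y(z_i)dx(z_i))$ turns the claim into
\[
\eta^{z_i}(z) \overset{?}{=} R(z):= \frac{\tilde y(z_i)}{\tilde y(z)}\,\frac{x(z_i)(t+x(z_i))}{x(z)(t+x(z))}\,\frac{dx(z)}{x(z)-x(z_i)}.
\]
By \eqref{eq:etadef} together with $B(z_1,z_2)=dz_1dz_2/(z_1-z_2)^2$, the left-hand side is $\eta^{z_i}(z)=dz\big(\tfrac{1}{z-z_i}-\tfrac{1}{z-\sigma(z_i)}\big)$, the unique $1$-form on $\mathbb{P}^1$ that is anti-invariant under $\sigma$, has simple poles at $z=z_i$ and $z=\sigma(z_i)$ with residues $+1$ and $-1$, and no other poles. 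Since $H^0(\mathbb{P}^1,\Omega^1)=0$, it suffices to show that $R$ has the same polar behaviour, for then $\eta^{z_i}-R$ is a global holomorphic $1$-form, hence zero.

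First I would record the symmetry: $x$ and $dx$ are $\sigma$-invariant while $\tilde y$ is $\sigma$-anti-invariant (since $\tilde y\,dx=\tfrac12(y-y\circ\sigma)dx$), so $R(\sigma(z))=-R(z)$. Next I would compute residues. Near $z=z_i$ all prefactors tend to $1$ and $dx(z)/(x(z)-x(z_i))\sim dz/(z-z_i)$, giving residue $+1$; near $z=\sigma(z_i)$ the factor $\tilde y(z_i)/\tilde y(z)$ contributes an extra sign via $\tilde y(\sigma(z_i))=-\tilde y(z_i)$, giving residue $-1$. For generic $z_i$ these are the only zeros of $x(z)-x(z_i)$ and the poles are simple, so $R$ reproduces the principal parts of $\eta^{z_i}$ at $z_i,\sigma(z_i)$.

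The substance of the argument—and the step I expect to be the most delicate—is checking that $R$ has no spurious poles at the remaining distinguished points, namely the ramification points $\mathcal{R}$ and the six points of $\mathcal{P}$ in \eqref{eq:P}. At a ramification point $x-x(r)$ has a double zero, so $dx$ has a simple zero; by \eqref{curve w/o} the anti-invariant $\tilde y$ (equivalently $\sqrt{U_{0,1}}$) also has a simple zero there, and the quotient $dx(z)/\tilde y(z)$ is holomorphic and nonvanishing, while the other factors are regular, so $R$ is holomorphic on $\mathcal{R}$. At the preimages of $x=0$ and $x=-t$ one uses \eqref{curve w/o} to see that $\tilde y$ has a simple pole that cancels the simple pole of $1/x$ (resp.\ $1/(t+x)$), leaving $R$ regular, exactly as in the derivation of \eqref{eta0}. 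The only genuinely multi-factor cancellation is at the two preimages $z=0,v$ of $x=\infty$, where $1/\tilde y$, $1/x$, $1/(t+x)$, $1/(x-x(z_i))$ and $dx$ all contribute poles or zeros; a short order count (using $\tilde y\sim v/(2x)$ and $x\sim C/z$ near $z=0$) shows the total order is $0$, so $R$ stays holomorphic. With the poles of $R$ matching those of $\eta^{z_i}$ and no others, the difference is a holomorphic $1$-form on $\mathbb{P}^1$ and vanishes, which proves \eqref{kernel}.
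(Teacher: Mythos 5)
Your proof is correct and follows essentially the same route as the paper's: the paper also argues analytically (mirroring its derivation of \eqref{eta0}) by matching pole structure on $\mathbb{P}^1$ via the curve equation \eqref{curve w/o} and invoking uniqueness of a meromorphic form with prescribed principal parts. The only cosmetic difference is that you divide out the $\tilde y\,dx$ factors and compare $1$-forms ($\eta^{z_i}$ versus $R$, using $H^0(\mathbb{P}^1,\Omega^1)=0$), whereas the paper keeps the left-hand side intact as a $\sigma$-invariant quadratic differential in $x(z)$ and characterizes it by its simple poles at the preimages of $x=0,-t,x(z_i),\infty$.
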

\begin{proof}
The equation can easily by checked by direct computation. A more analytic approach is similar to the proof of \eqref{eta0}. Namely, as the LHS is invariant under the involution $z\to\sigma(z)$, it is a quadratic differential in $x(z)$ that has poles at the preimages of $x(z)=0,-t, x(z_i),\infty$ which are all simple. With respect to $z_i$, the LHS is a $\sigma$-invariant $(-1)$-form whose only poles are simple at the preimages of $x(z)=x(z_i)$ and ramification points. Then, by considering how the LHS behaves at each pole, e.g. ${\rm LHS}\sim \frac{dx(z)^2}{(x(z)-x(z_i))dx(z_i)}$ as $x(z)\to x(z_i)$, we see that the RHS of \eqref{kernel} is the unique expression with the prescribed behavior.
\end{proof}

\begin{theorem}\label{thm:w/o}
   For every $(g,n) \in \frac12\mathbb{Z}_{\geq0} \times \mathbb{Z}_{\geq1}$, the RTR correlators $\omega_{g,n}(z_1, \ldots,z_n)$ are the analytic continuations of the  generating functions $\phi_{g,n}( x_1,\ldots,x_n)$ to $\Sigma^{n}$ under the identification $x_i = x(z_i)$ (up to the explicit shift for $(g,n) = (\frac{1}{2},1)$ below). In particular, as a series expansion near $z_i = 0$ (where $x(z_i) = \infty$), we have 
   \begin{equation*}
   	\omega_{g,n}(z_1,\ldots, z_n)  - \delta_{g,\frac{1}{2}}\delta_{n,1}  \left(\frac{\bb dx(z_1)  }{2(t+x(z_1))}+\frac{\bb dx(z_1) }{2x(z_1)}\right) = \sum_{\mu_1,\ldots,\mu_n\geq1}F_{g,n}[\mu_1,\ldots,\mu_n]  \prod_{i=1}^n \frac{dx (z_i)}{x(z_i)^{\mu_i+1}}. 
   \end{equation*}

\end{theorem}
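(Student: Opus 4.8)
The plan is to prove the theorem by induction on the topological type $(2g-2+n)$, matching the formal loop equations satisfied by the $\phi_{g,n}$ (via \cref{cor:FLE}) with the refined topological recursion formula \eqref{eq:RTR} that produces the $\omega_{g,n}$. The base cases $(g,n) = (0,1), (0,2), (\frac{1}{2},1)$ are already handled by \cref{lem:unstable}, which establishes that $\phi_{0,1}, \phi_{0,2}$ analytically continue to $\omega_{0,1}, \omega_{0,2}$ and that $\tilde\phi_{\frac12,1}$ continues to $\omega_{\frac12,1}$. So the inductive step is the heart of the matter: assuming all $\phi_{g',n'}$ with $2g'-2+n' < 2g-2+n$ analytically continue to the corresponding RTR correlators $\omega_{g',n'}$, I would show the same for $\phi_{g,n}$.

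The key observation is that the quantity $\operatorname{Rec}^{\tilde\phi}_{g,1+n}$ appearing in \cref{cor:FLE} is built entirely out of lower-order $\tilde\phi$'s (together with the $\omega_{0,2}$-type Cauchy kernel terms), so by the inductive hypothesis it analytically continues to $\operatorname{Rec}^{\omega}_{g,1+n}(z; z_{[n]})$ on $\Sigma$ under $x_i = x(z_i)$. The same holds for the $U_{g,1+n}$ terms and the derivative-of-$\tilde\phi_{g,n}$ term on the right-hand side of \cref{cor:FLE}, since $U_{g,1+n}$ is manifestly rational in $x$ and the remaining piece involves only $\tilde\phi_{g,n}$ of strictly lower order. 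Dividing by $2\tilde\phi_{0,1}(x) = 2\tilde y(z)\,dx(z)$, which is meromorphic on $\Sigma$, then shows that $\tilde\phi_{g,1+n}$ itself analytically continues to a meromorphic differential on $\Sigma$. This establishes analytic continuation; the remaining task is to verify that this continuation equals $\omega_{g,1+n}$ as produced by \eqref{eq:RTR}.

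For the identification with the RTR correlator, I would feed the analytically continued loop equation into the residue formula. Using \cref{lem:kernel}, the recursion kernel $\frac{\eta^z(z_0)}{2(\omega_{0,1}(z)-\omega_{0,1}(\sigma(z)))}$ is precisely what converts the right-hand side of \cref{cor:FLE} into the contour-integral form \eqref{eq:RTR}: the factor $\frac{1}{2\tilde\phi_{0,1}(x)} = \frac{1}{\omega_{0,1}(z)-\omega_{0,1}(\sigma(z))}$ matches the denominator in the kernel, and \eqref{kernel} identifies the geometric-series kernel $\frac{x_i(t+x_i)}{x(t+x)(x-x_i)}$ with $\tilde y\,dx \cdot \eta^{z_i}/(\tilde y(z_i)dx(z_i))$. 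Concretely, I would show that applying the residue operator $\frac{1}{2\pi i}\oint_{C_+ - C_-}$ of $\frac{\eta^z(z_0)}{2(\omega_{0,1}(z)-\omega_{0,1}(\sigma(z)))}$ to $\operatorname{Rec}^{\omega}_{g,1+n}$ reproduces exactly the formula of \cref{cor:FLE}, where the contour picks out the correct poles (those at $\mathcal R \cup \sigma(\mathcal P_+) \cup \sigma(z_{[n]})$) and the $U_{g,1+n}$ and derivative terms arise from the residues at the points in $\mathcal{P}_+$ and at $z = z_i$. The final expansion statement near $z_i = 0$ then follows immediately because $\phi_{g,n}$ was \emph{defined} as the germ with that expansion, and the shift for $(g,n) = (\frac12,1)$ is exactly the one recorded in \cref{lem:unstable}.

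The main obstacle I expect is the contour-integral bookkeeping in the identification step: one must verify that the poles of the integrand $\frac{\eta^z(z_0)}{2(\omega_{0,1}(z)-\omega_{0,1}(\sigma(z)))}\operatorname{Rec}^{\omega}_{g,1+n}$ are distributed correctly between $C_+$ and $C_-$, that the residues at $\sigma(\mathcal P_+)$ and at the ramification points $\mathcal R$ combine to give precisely the rational terms $U_{g,1+n}$ and the $\partial_{x_i}$-term, and that there are no spurious contributions (in particular, that the formal expansion in $|x| < |x_i|$ matches the global residue computation). This requires the pole-structure and residue-free properties of the $\omega_{g,n}$ quoted earlier, together with a careful use of the anti-invariance of $\tilde y\, dx$ under $\sigma$ and the defining properties of $\eta^c$. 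The $\bb$-dependent term in $\operatorname{Rec}^{\omega}$ must also be tracked through the analytic continuation, but since it only involves $\omega_{g-\frac12,n+1}$ of lower order, the inductive hypothesis covers it, and the structure of the refined kernel is designed precisely so that this term lands correctly.
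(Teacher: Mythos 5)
Your proposal is correct and follows essentially the same route as the paper's proof: base cases via \cref{lem:unstable}, induction on $2g-2+n$ using \cref{cor:FLE} to establish analytic continuation, and identification with the RTR correlators via the kernel identity of \cref{lem:kernel}. The only divergence is in the identification step: the paper does not evaluate the contour integral in \eqref{eq:RTR} by hand, but instead quotes the already-established ``local'' formula \eqref{eq:omegagn} for the RTR correlators from \cite[equations (A.13)--(A.15)]{KidwaiOsuga2023} and matches it against \cref{cor:FLE}. Your plan to re-derive that local formula by residue bookkeeping is viable given the quoted pole-structure and residue-free properties, but note that this computation is exactly the content of those cited equations and is not uniform in $(g,n)$: for $(g,1+n)=(0,3)$ and $(1,1)$ the evaluation is exceptional, because $\omega_{\frac12,1}$ (and the antisymmetrized $\omega_{0,1}$) have poles at $\mathcal{P}_+$, and the residues there are precisely what produce the nonzero term $U_{1,1}$, whereas for all other stable $(g,n)$ the residues at $\mathcal{P}_+$ vanish and $U_{g,1+n}=0$. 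So your ``bookkeeping'' obstacle is real but surmountable; the paper simply outsources it to the cited reference.
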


\begin{proof}
The unstable cases $(g,n) = (0,1),(0,2),(\frac{1}{2},1)$ are already
covered by \cref{lem:unstable}. 

Let us turn to the stable cases where $ (2g-2+n)>0$. First, we prove by induction on $(2g-2+n)$ that the generating functions $\phi_{g,n}$ analytically continue to meromorphic differentials on the  curve $\Sigma $ under the identification $x = x(z)$. Indeed, \cref{cor:FLE} provides an expression for  $\tilde \phi_{g,n}$ in terms of the correlators  $\tilde \phi_{g',n'}$ with $(2g'-2+n') < (2g-2+n)$. By the induction hypothesis, we see that the stable $\tilde \phi_{g,n}(x_1,\ldots,x_n) = \phi_{g,n}(x_1,\ldots,x_n) $ also admit an analytic continuation to the  curve $\Sigma$. Let us denote these analytically continued differentials by $\check{\phi}_{g,n}(z_1,\ldots,z_n)$.

For any stable $(g,n) \in  \frac{1}{2} \mathbb Z_{\geq 0} \times \mathbb Z_{\geq 0}$, the formal loop equations of \cref{prop:FLE} imply that the $\check{\phi}_{g,1+n}(z,z_1,\ldots,z_n)$ are given by the following explicit formula:
\begin{multline} \label{eq:checkLE}
	 \check \phi_{g,1+n} (z,z_{[n]})   = -\frac{\operatorname{Rec}_{g,1+n}^{\check \phi}(z;z_{[n]})}{2\check \phi_{0,1}(z)} - \frac{U_{g,n+1}(x(z),x(z_{[n]}))}{2\check \phi_{0,1}(z)} \\ + \frac{(dx(z))^2}{2\check \phi_{0,1}(z)} \sum_{i=1}^n d_{z_i}  \left(\frac{x(z_i)(t+x(z_i))}{x(z)(t+x(z))}\frac{\check \phi_{g,n}(z_{[n]})}{(x(z)-x(z_i)) dx(z_i)}\right),
\end{multline} where we use the notation $\operatorname{Rec}_{g,1+n}^{\check \phi}$ as defined in \eqref{eq:Recdef} with the choice of the required meromorphic function to be $x(z)$. Note that $\check \phi_{0,1}(z)$ is anti-invariant under the involution $\sigma$.

On the other hand, the refined TR correlator $\omega_{g,1+n}$ can be expressed in terms of $\omega_{g',1+n'}$ such that $2g'-2+1+n'< 2g-2+1+n$: 
\begin{equation}\label{eq:omegagn}
    \omega_{g,1+n}(z,z_{[n]})=-\frac{\text{Rec}^\omega_{g,1+n}(z;z_{[n]})}{2\tilde y(z) dx(z)} - \frac{U_{g,1+n}(x(z),x(z_{[n]}))}{2 \tilde y(z)d x(z)} +\sum_{i=1}^nd_{z_i}\left(\frac{\eta^{z_i}(z)}{2\tilde y(z_i)dx(z_i)}\omega_{g,n}(z_{[n]})\right).
\end{equation} This statement holds for all stable $\omega_{g,1+n}$ such that $2g-2+1+n > 0$ (except $(g,1+n) = (0,3), (1,1)$) due to \cite[equation (A.15)]{KidwaiOsuga2023}. The case  $(g,1+n) = (0,3)$ is proved in equation (A.13) of \textit{loc. cit.}, and the last remaining case $(g,1+n) = (1,1)$ follows from equation (A.14) of \textit{loc. cit.} after evaluating the residues at the points $ p \in \mathcal P_+ $ explicitly\footnote{The definition of $\mathcal{P}_+$ in the present article is $\widetilde{\mathcal{P}}_-$ in \cite{KidwaiOsuga2023}.}. 

After substituting the formula \eqref{kernel} in Lemma \ref{lem:kernel} into equation \eqref{eq:omegagn}, we observe that equation \eqref{eq:omegagn}
takes exactly the same form as  the formula \eqref{eq:checkLE} for $\check \phi_{g,1+n}(z,z_{n})$. Finally, by induction on $(2g-2+n)$, we see that $ \check \phi_{g,n}$ analytically continues to the $\omega_{g,n}$, which finishes the proof.
\end{proof}

\section{Extension: inserting internal faces}
\label{sec:Internal}

In this section, we consider $G$-weighted $\bb$-Hurwitz numbers (with $G (z)= \frac{(u_1+z)(u_2+z)}{v-z} $ fixed as before) with internal faces, and prove that refined topological recursion can be used to compute  them. 

\subsection{Inserting internal faces}
Consider the correlators $\omega_{g,n}$ computed by refined topological recursion on the refined spectral curve $\mathcal S_{\bm \mu}$ of \cref{sec:spectral curve}. Recall that these correlators were proved to compute $ G$-weighted $\bb$-Hurwitz numbers in \cref{thm:w/o}. Now, for $D,m\in\mathbb{Z}_{\geq1}$, consider a tuple of parameters $(p_1,\ldots,p_D)$ and  define\footnote{There is a  missing minus sign in the analogous equation appearing in \cite[Definition 3.3]{BonzomChapuyCharbonnierGarcia-Failde2024}.}
\begin{equation}\label{eq:FgnD}
    F_{g,n}^{D,m}\left[k_1,\ldots,k_n\right]:=(-1)^{n+m}\Res_{z_1=0}\cdots\Res_{z_{n+m}=0}\left(\prod_{i=1}^nx(z_i)^{k_i}\right)\left(\prod_{j=n+1}^{n+m}V(x(z_{j}))\right)\; \omega_{g,n+m}(z_1,\ldots,z_{n+m})
\end{equation}
where we define  the \emph{potential} $V$  as
\begin{equation*}
    V(x):=\sum_{i=1}^{D}\frac{p_i}{i}x^{i}.
\end{equation*}
By introducing the degree counting parameter $\epsilon$, we further define a series in $\epsilon$ by
\begin{align}
  \label{eq:FInternal}
    F_{g,n}^{D}\left[k_1,\ldots,k_n;\epsilon\right]:= F_{g,n}\left[k_1,\ldots,k_n\right] + \sum_{m \geq 1}\frac{\epsilon^m}{m!}F_{g,n}^{D,m}\left[k_1,\ldots,k_n\right].
\end{align}
We claim that $F_{g,n}^{D}\left[k_1,\ldots,k_n;\epsilon\right]$ in
\eqref{eq:FInternal} and
$F_{g,n}^{D}\left[k_1,\ldots,k_n;\epsilon\right]$ introduced in
\cref{prop:HurwitzWithBoundaries} is the same object, so that
it is (up to the combinatorial factor given in
\eqref{eq:HurwitzWithBoundaries}) the generating
series of  colored monotone Hurwitz
maps of genus $g$ with $n$ marked boundaries of degrees
$k_1,\ldots,k_n$ and with internal faces of degree $i$ weighted by
$\epsilon t^i p_i$. Indeed, the RHS of~\eqref{eq:FgnD} can be rewritten as
\[
  \sum_{k_{n+1},\dots,k_{n+m}=1}^D\prod_{i=n+1}^{m+n}\frac{p_{k_i}}{k_i}
  (-1)^{n+m}\Res_{z_1=0}\cdots\Res_{z_{n+m}=0}\left(\prod_{i=1}^{n+m}x(z_i)^{k_i}\right)\,
  \omega_{g,n+m}(z_1,\ldots,z_{n+m}),\]
which, by~\cref{thm:w/o}, is equal to $F_{g,n}^{D}\left[k_1,\ldots,k_n;\epsilon\right]$ introduced in
\cref{prop:HurwitzWithBoundaries}. In particular, the shift for $(g,n) = (\frac{1}{2},1)$ appearing in \cref{thm:w/o} does not contribute here as $m \geq 1$, and thus $m+n \geq 2$.

\begin{definition}
Consider  the generating function of $F_{g,n}^{D}\left[k_1,\ldots,k_n;\epsilon\right]$, for $(g,n) \in \frac{1}{2}\mathbb Z_{\geq 0} \times \mathbb Z_{\geq 1}$ given by
\begin{equation*}
 \phi_{g,n}^D(X_1,\ldots,X_n) := \sum_{k_1,\ldots, k_n \geq 1} F_{g,n}^{D}\left[k_1,\ldots,k_n;\epsilon\right] \prod_{i=1}^n \frac{ d X_i}{X_i^{k_i+1}}  + \delta_{g,0}\delta_{n,1} \epsilon V'(X_1)dX_1.
\end{equation*}  
\end{definition}
 Again these $\phi_{g,n}^D$  are germs of meromorphic differentials on the $n$-th product of formal disks with coordinates $X_i^{-1}$. The goal of this section is to prove that the $\phi^D_{g,n}$ analytically continue to meromorphic differentials on an algebraic curve, and moreover that these analytic continuations coincide with the RTR correlators on a certain refined spectral curve (up to an explicit shift for $(g,n) = (\frac{1}{2},1)$). This result will simultaneously prove that $F_{g,n}^{D}\left[k_1,\ldots,k_n;\epsilon\right]$ is a convergent series in $\epsilon$.

\subsection{The (unrefined) spectral curve} 

When $\bb = 0$,  \cite{BonzomChapuyCharbonnierGarcia-Failde2024} and  \cite{BychkovDuninBarkowskiKazarianShadrin2025} prove independently that TR on a certain spectral curve computes Hurwitz numbers with internal faces. Let us briefly review these results following the presentation of \cite{BonzomChapuyCharbonnierGarcia-Failde2024}. We only focus on the statements that are relevant for this paper.

Consider $\Sigma=\mathbb{P}^1$ with the global coordinate $z$ as before. Let $\{A^{(i)}\}_{i\in\{1,2,3\}}$ and $\{B^{(i)}\}_{i\in\{1,2,3\}}$ be polynomials in $z$ and $z^{-1}$ respectively, which are determined by the following relations between them
\begin{align*}
    A^{(1)}(z)=&1+\frac{u_2 t}{v} \left\{z\frac{B^{(2)}(z)}{B^{(3)}(z)}\right\}^{\geq0}, &B^{(1)}(z)=&1+\frac{\epsilon}{u_1} \sum_{s=1}^D p_s\left[\frac{A^{(1)}(z)^{s-1}A^{(2)}(z)^s}{A^{(3)}(z)^s}\right]^{<0},\nonumber\\
    A^{(2)}(z)=&1+\frac{u_1 t}{v} \left\{z\frac{B^{(1)}(z)}{B^{(3)}(z)}\right\}^{\geq0}, &B^{(2)}(z)=&1+\frac{\epsilon}{u_2} \sum_{s=1}^D p_s\left[\frac{A^{(1)}(z)^{s}A^{(2)}(z)^{s-1}}{A^{(3)}(z)^s}\right]^{<0},\nonumber\\
    A^{(3)}(z)=&1-\frac{tu_1u_2}{v^2} \left\{z\frac{B^{(1)}(z)B^{(2)}(z)}{B^{(3)}(z)^2}\right\}^{\geq0},&B^{(3)}(z)=&1-\frac{\epsilon }{v}\sum_{s=1}^D p_s\left[\frac{A^{(1)}(z)^{s}A^{(2)}(z)^s}{A^{(3)}(z)^{s+1}}\right]^{<0},\nonumber\\
\end{align*}
where for $f(z)=\sum_{k\geq d}f_kz^k$ and $g(z)=\sum_{k\geq d}g_kz^{-k}$ we define $[f]^{<0}=\sum_{k=d}^{-1}f_kz^k$ and $\{g\}^{\geq0}=\sum_{k=d}^0g_kz^{-k}$.\footnote{Note that the dictionary between our parameters and the parameters in \cite{BonzomChapuyCharbonnierGarcia-Failde2024} is
\[
    u_1^{\text{BCCG}}=\frac{1}{u_1},\quad u_2^{\text{BCCG}}=\frac{1}{u_2},\quad u_3^{\text{BCCG}}=-\frac{1}{v},\quad t^{\text{BCCG}}= \frac{u_1 u_2t}{v},\quad\alpha^{\text{BCCG}}=\epsilon.
\]} 
Then, we define the meromorphic functions $X,  Y$ on $\Sigma$ as
\begin{align}\label{eq:Xdef}
    X(z):=\frac{A^{(1)}(z)A^{(2)}(z)}{zA^{(3)}(z)},\qquad Y(z):=\frac{u_1-u_1A^{(1)}(z)B^{(1)}(z)}{X(z)}.
\end{align}
The involution $\sigma:\Sigma\to\Sigma$ is given by the non-trivial solution of $X(z)=X(\sigma(z))$. Furthermore, recall that $B$ is the fundamental bidifferential on $\Sigma$, and  define the unstable correlators 
\begin{equation*}
    \omega_{0,1}^{D}(z):= Y(z)dX(z),\qquad \omega_{0,2}^{D}(z_1,z_2):=-B(z_1,\sigma(z_2)),
\end{equation*} to complete the definition of the (unrefined) spectral curve. 

Then one of the main results of \cite{BonzomChapuyCharbonnierGarcia-Failde2024, BychkovDuninBarkowskiKazarianShadrin2025} (which works in a much more general context than we are considering in this paper, namely for double weighted Hurwitz numbers with the weight being the product of an exponential and a rational function) is that the Chekhov--Eynard--Orantin topological recursion correlators are generating functions for weighted Hurwitz numbers with internal faces for any $(g,n) \in \mathbb Z_{\geq 0}\times \mathbb Z_{\geq 1}$. The statements that we will use in this paper only concern the unstable correlators, so let us recall those.

\begin{theorem}[\cite{BonzomChapuyCharbonnierGarcia-Failde2024, BychkovDuninBarkowskiKazarianShadrin2025}]
	The differentials $ \phi^D_{0,1}$ and $ \phi^D_{0,2}$ viewed as meromorphic differentials on a formal neighborhood of $z= 0$ (where $X(z) = \infty$) on $\Sigma$ analytically continue to meromorphic differentials on the whole Riemann surface $\Sigma$ under the identification $X = X(z)$. Moreover, these analytic continuations coincide with the unstable TR correlators $\omega_{0,1}^D$ and $\omega_{0,2}^D$ respectively.
\end{theorem}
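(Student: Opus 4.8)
The statement involves only genus-zero correlators, where refinement is inert, so I would work at $\bb = 0$ with ordinary Chekhov--Eynard--Orantin data throughout. The plan is to reuse the loop-equation-and-uniqueness strategy of \cref{thm:w/o}: first I would produce a functional equation satisfied by the formal disk series $\phi_{0,1}^D$, argue that it determines $\phi_{0,1}^D$ uniquely among formal series with the prescribed behaviour at $X = \infty$, and then verify that the candidate $\omega_{0,1}^D = Y\,dX$ solves the same equation. The cylinder series $\phi_{0,2}^D$ I would handle separately using the universality of the genus-zero two-point function.

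For $\phi_{0,1}^D$ the first step is to extract the genus-zero one-boundary loop equation. Combinatorially this is a peeling (Tutte-type) recursion for colored monotone Hurwitz maps with internal faces, in the spirit of \cref{prop:HurwitzWithBoundaries}; algebraically it is the leading $[\hbar^0]$ part of an analog of the $\mathsf{W}$-constraint of \cref{theo:CDO}, now carrying the potential $V$ through the substitution $\tilde p_i \mapsto \frac{\epsilon}{\hbar}p_i$ for $i \leq D$. Either route should produce a quadratic relation of the shape $P(X,Y) = 0$ cutting out a rational spectral curve whose branching is governed by $V$ and by the weight $G$. The claim I would then isolate is that the parametrization $(X(z), Y(z))$ assembled from the auxiliary series $A^{(i)}, B^{(i)}$ is a genus-zero rational uniformization of this curve.

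The heart of the argument is therefore the identity $P(X(z), Y(z)) \equiv 0$, equivalently that the expansion of $Y(z)\,dX(z)$ near $z=0$ (where $X \to \infty$) reproduces $\phi_{0,1}^D$, including the polynomial part $\epsilon V'(X_1)\,dX_1$ coming from the potential. I expect this to reduce to a family of algebraic identities among the $A^{(i)}$ and $B^{(i)}$ read off from their coupled fixed-point relations: the $A^{(i)}$ collect the polynomial (``$\{\cdot\}^{\geq 0}$'') parts and the $B^{(i)}$ the principal (``$[\cdot]^{<0}$'') parts, weighted by the coefficients $p_s$, of the auxiliary series that assemble the resolvent, and the two families are designed precisely so that this resolvent closes up into the quadratic loop equation. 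Since a formal series in $X^{-1}$ solving such a quadratic equation with the prescribed behaviour at infinity is unique, the analytic continuation of $\phi_{0,1}^D$ would then exist and coincide with $Y\,dX$.

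For $\phi_{0,2}^D$ I would appeal to the universality of the genus-zero connected two-point function: its loop equation forces it to be the anti-invariant fundamental bidifferential of the deformed curve, depending only on $X$ and its involution $\sigma$ and not on $Y$ or on the potential beyond how the latter reshapes $X$. Concretely, I would check that $-B(z_1,\sigma(z_2))$, expanded near $z_1 = z_2 = 0$ in the coordinates $X_i^{-1}$, equals $\sum_{k_1,k_2 \geq 1} F_{0,2}^D[k_1,k_2;\epsilon]\,\frac{dX_1}{X_1^{k_1+1}}\frac{dX_2}{X_2^{k_2+1}}$, the minus sign and the $\sigma$ matching the antidiagonal convention $\omega_{0,2}^D = -B(z_1,\sigma(z_2))$. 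The hard part throughout is the single computational step of verifying $P(X(z), Y(z)) \equiv 0$: unwinding the coupled $\{\cdot\}^{\geq 0}$ and $[\cdot]^{<0}$ projections in the defining relations of $A^{(i)}, B^{(i)}$ into one closed algebraic identity is where all the combinatorial content of the internal-face insertion is concentrated, while everything else is formal bookkeeping and an appeal to uniqueness. Since the statement is already established in \cite{BonzomChapuyCharbonnierGarcia-Failde2024, BychkovDuninBarkowskiKazarianShadrin2022}, I would either reproduce that computation in the present normalization or simply cite it.
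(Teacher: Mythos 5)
You should first note that the paper does not actually prove this statement: it is imported verbatim from \cite{BonzomChapuyCharbonnierGarcia-Failde2024, BychkovDuninBarkowskiKazarianShadrin2022} (hence the citations in the theorem header), and the surrounding text only recalls the unstable cases because they are what is needed later. So your closing fallback --- ``simply cite it'' --- is exactly what the paper does, and as a matter of fidelity to the paper your proposal is compatible with it.

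If, however, your sketch is meant to stand on its own, there is a genuine gap at its declared ``heart''. The genus-zero one-boundary loop equation with internal faces is \emph{not} a closed algebraic relation $P(X,Y)=0$: as the paper's own \cref{lem:Y^2} shows, the constraint reads
\begin{equation*}
(t+X)\,Y^2+\Bigl(\tfrac{t(u_1+u_2)}{X}-v-\epsilon(t+X)V'(X)\Bigr)Y
+\tfrac{tu_1u_2}{X^2}-\epsilon\Bigl(\tfrac{t(u_1+u_2)}{X}-v\Bigr)V'(X)
+\epsilon\bigl[(t+X)V'(X)\,Y\bigr]_{\leq-2}=0,
\end{equation*}
and the last term is a truncation (projection) of a series that depends on $Y$ itself. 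Because of this projection, your uniqueness step --- ``a formal series in $X^{-1}$ solving such a quadratic equation with prescribed behaviour at infinity is unique'' --- does not apply as stated; one must first argue (by degree counting and by imposing that the curve be rational, i.e.\ that all but two roots of the resulting degree-$(2D+2)$ polynomial be double) that the equation collapses to an honest algebraic curve $\tilde Y^2=\frac{(vX-a)(vX-b)M(X)^2}{4X^2(t+X)^2}$. Likewise, verifying that the coupled fixed-point system for the $A^{(i)},B^{(i)}$ produces a parametrization solving this equation is precisely the main technical content of \cite{BonzomChapuyCharbonnierGarcia-Failde2024}; it cannot be relegated to ``formal bookkeeping'', since the interplay between the $\{\cdot\}^{\geq0}$ and $[\cdot]^{<0}$ projections is what encodes the internal-face combinatorics. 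Your treatment of $\phi^D_{0,2}$ via the universality of the genus-zero cylinder function is fine in principle, but it too presupposes that the curve and its involution $\sigma$ have already been constructed, i.e.\ it inherits the same gap.
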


As part of their proof, the authors of \cite{BonzomChapuyCharbonnierGarcia-Failde2024} also prove a variational formula for the TR correlators w.r.t the parameter $\epsilon$, and this will be crucial for our computations. 
\begin{theorem}\label{thm:unstablevariation}
	The following variational formulas hold for the correlators $\omega_{0,1}$ and $\omega_{0,2}$:
	\begin{align} \nonumber
		\delta_{\epsilon}^{(1)} \omega_{0,1}^{D}(z_0)&=-\Res_{z=0}V(X(z))\left(\omega_{0,2}^{D}(z,z_0)+\frac{dX(z)dX(z_1)}{(X(z)-X(z_1))^2}\right),\\
		\delta_{\epsilon}^{(2)}\omega_{0,2}^{D}(z_0,z_1)&=-\Res_{z=0}V(X(z))\omega_{0,3}^{D}(z,z_0,z_1).\label{variation of w_{0,2}}
	\end{align}
\end{theorem}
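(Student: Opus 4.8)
The plan is to recognize both identities as the genus-zero, $\bb=0$ instances of the deformation condition \textbf{D2} of \cref{def:deformation} for the parameter $\epsilon$, with deformation data given by a small positively-oriented loop $\gamma$ around $z=0$ and the function $\Lambda(z)=-V(X(z))$, so that $\int_{z\in\gamma}\Lambda(z)(\cdots)$ equals $-\Res_{z=0}V(X(z))(\cdots)$ up to the standard factor of $2\pi\mathrm i$. First I would clean up the right-hand side of the $\omega_{0,1}^D$ formula. Since $\omega_{0,2}^D(z,z_0)=-B(z,\sigma(z_0))$ and $B$ satisfies the biresidue identity $B(z,z_0)+B(z,\sigma(z_0))=\frac{dX(z)\,dX(z_0)}{(X(z)-X(z_0))^2}$, the bracket collapses to $B(z,z_0)$ and the claim becomes
\begin{equation*}
	\delta_\epsilon^{(1)}\omega_{0,1}^D(z_0)=-\Res_{z=0}V(X(z))\,B(z,z_0).
\end{equation*}
Since $\delta_\epsilon^{(1)}$ is the derivative at fixed $X$ and $\delta_\epsilon^{(1)}dX=0$, the left-hand side is simply $(\partial_\epsilon Y|_X)\,dX(z_0)$, the variation of the disk function $Y$ viewed as a function of $X$.

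Both sides are meromorphic $1$-forms in $z_0$ on $\mathbb{P}^1$, so I would prove the equality by matching singular parts. As $X$ has a simple pole at $z=0$, the polynomial $V(X(z))$ has a pole of order $D$ there; expanding $B(z,z_0)=\frac{dz\,dz_0}{(z-z_0)^2}$ at $z=0$ then shows that the right-hand side is holomorphic away from $z_0=0$ (including at $z_0=\infty$) with an explicit principal part of order at most $D+1$ at $z_0=0$. The heart of the computation is to show that $(\partial_\epsilon Y|_X)\,dX$ has precisely this principal part and no other poles. For this I would differentiate the fixed-point system defining $A^{(i)},B^{(i)}$ with respect to $\epsilon$: the projections $[\cdot]^{<0}$ and $\{\cdot\}^{\geq 0}$ are $\epsilon$-linear, so $\partial_\epsilon A^{(i)},\partial_\epsilon B^{(i)}$ solve a linear system whose inhomogeneous term is exactly the insertion of $\sum_s p_s(\cdots)$, i.e. of $V$. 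Feeding this into $Y=(u_1-u_1A^{(1)}B^{(1)})/X$ and passing to the fixed-$X$ derivative should reproduce the claimed principal part at $z_0=0$, closing the first formula.

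For the $\omega_{0,2}^D$ formula, the only $\epsilon$-dependence of $\omega_{0,2}^D=-B(z_1,\sigma(z_2))$ is through the involution $\sigma$, because $\Sigma=\mathbb{P}^1$ and hence $B$ in the global coordinate are $\epsilon$-independent; thus $\delta_\epsilon^{(2)}\omega_{0,2}^D$ reduces to a Rauch-type variation governed by $\partial_\epsilon\sigma$ at fixed $X$. I would then invoke the explicit three-point function, e.g. $\omega_{0,3}^D(z,z_0,z_1)=\sum_{r\in\mathcal R}\Res_{w=r}\frac{B(z,w)B(z_0,w)B(z_1,w)}{dX(w)\,dY(w)}$, and show that $-\Res_{z=0}V(X(z))\,\omega_{0,3}^D(z,z_0,z_1)$ reproduces this variation, again using the $\epsilon$-derivatives of the $A^{(i)},B^{(i)}$ system to identify $\partial_\epsilon\sigma$ with the $V$-insertion. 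I expect this last step to be the main obstacle: one must control the fixed-$X$ derivative of $\sigma$ near the ramification points $\mathcal R$ and match it against the residue of $V$ against $\omega_{0,3}^D$, which requires a careful contour and pole analysis rather than the essentially one-variable bookkeeping that sufficed for $\omega_{0,1}^D$. Since these identities are precisely the hypotheses \textbf{D2} feeding \cref{thm:variation}, one may alternatively cite the explicit computation of \cite{BonzomChapuyCharbonnierGarcia-Failde2024}.
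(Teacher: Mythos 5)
The paper offers no independent proof of this statement: it is imported wholesale from \cite{BonzomChapuyCharbonnierGarcia-Failde2024}, the only original content being the remark that the convention $\omega_{0,2}^D(z_1,z_2)=-B(z_1,\sigma(z_2))$ used here differs from $\omega_{0,2}^{{\rm BCCG}}(z_1,z_2)=B(z_1,z_2)$ by $\frac{dX(z_1)dX(z_2)}{(X(z_1)-X(z_2))^2}$, a term annihilated by the variational operator (and, in the first formula, exactly compensated by the extra summand inside the bracket). Your closing sentence --- cite the explicit computation of \cite{BonzomChapuyCharbonnierGarcia-Failde2024} --- is therefore precisely the paper's proof, and your collapse of the bracket to $B(z,z_0)$ via \eqref{eq:BsigmaB} is the same convention-translation the paper records in its remark following the theorem.

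The direct argument you sketch, however, is not complete as written, and you should be aware that its unfinished steps are the entire content of the theorem (which is why the paper outsources them). For the first formula, everything lives in the step you defer: showing that $(\partial_\epsilon Y|_X)\,dX(z_0)$ has no poles other than at $z_0=0$ and that its principal part there matches the residue. A priori $\delta_\epsilon^{(1)}\omega_{0,1}^D$ could have poles at the ramification points (the operator $\delta_\epsilon^{(1)}$ involves division by $dX$), at $\sigma(0)$ (the second preimage of $X=\infty$, where $Y\,dX$ also has a high-order pole), and at the remaining poles of $Y$; ruling these out requires the detailed control of the $\epsilon$-dependence of the $A^{(i)},B^{(i)}$ fixed-point system that you only gesture at. The second formula is in worse shape: you acknowledge yourself that identifying $\partial_\epsilon\sigma$ at fixed $X$ with the $V$-insertion against $\omega_{0,3}^D$ is ``the main obstacle,'' and no mechanism for overcoming it is offered. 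So, read as a self-contained derivation, the proposal has genuine gaps; read as a reduction to the cited computation, it coincides with what the paper actually does.
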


Note that for \eqref{variation of w_{0,2}}, $\omega_{0,2}^D(z_1,z_2)$ differs from $\omega_{0,2}^{{\rm BCCG}}(z_1,z_2)$ in \cite{BonzomChapuyCharbonnierGarcia-Failde2024}, but the difference is $\frac{dX(z_1)dX(z_2)}{(X(z_1)-X(z_2)^2}$ which vanishes after acting by the variational operator. One of the key ideas of the proof of \cite{BonzomChapuyCharbonnierGarcia-Failde2024} is that inserting internal faces as in \eqref{eq:FgnD} is very closely related to the variational formula  of \cref{thm:unstablevariation} with the choice $\Lambda  = - V(X)$. We will import this idea to the setting of $\bb$-Hurwitz numbers to deduce the correct refined spectral curve.

\subsection{Refined spectral curve with internal faces.}
\label{sub:SpectralInternal}
 In this section, we will find a refined spectral curve satisfying the deformation condition of \cref{def:deformation} with respect to $\epsilon$. Then, in the next section, we will use the variational formula to prove that the correlators on this refined spectral curve are precisely the analytic continuations of $\phi^D_{g,n}$ to the spectral curve.

\noindent Let us define
\begin{equation}\label{eq:Ydef}
    \tilde Y(z):= Y(z)+\frac{t(u_1+u_2)-v X(z)}{2X(z)(t+X(z))}-\epsilon \frac{V'(X(z))}{2}.
\end{equation}
This shift makes $\tilde Y(z)$ anti-invariant under the involution $\sigma$ as shown below.

\begin{lemma}\label{lem:Y^2}
	For a generic choice of $(\epsilon,p_1,\dots,p_D)$, there exists a polynomial $M(X)$ of degree $D$ such that $M(X)|_{\epsilon=0}=1$ and
	\begin{equation}
		\tilde Y^2=\frac{(v X-a)(v X-b)M(X)^2}{4X^2(t+X)^2}.\label{curve w/}
	\end{equation}
	Here, generic means that we avoid a set of values of $(\epsilon,p_1,\dots,p_D)$ where $a$ coincides with $b$ or where roots of $M(X)$ coincide with $\frac{a}{v}$, $\frac{b}{v}$, or each other.
\end{lemma}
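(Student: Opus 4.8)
The plan is to prove the two assertions separately: first that the shift makes $\tilde Y$ anti-invariant under $\sigma$, and then that $\tilde Y^2$ has the claimed factored form, with the genericity hypotheses used only to rule out coincidences of zeros. Since the two correction terms added to $Y$ in \eqref{eq:Ydef} are pulled back from the base $\mathbb{P}^1_X$, they are $\sigma$-invariant, so proving $\tilde Y\circ\sigma=-\tilde Y$ is equivalent to identifying the symmetric part $\tfrac12\big(Y(z)+Y(\sigma(z))\big)$ with $-\tfrac{t(u_1+u_2)-vX}{2X(t+X)}+\tfrac{\epsilon}{2}V'(X)$. The left-hand side is manifestly $\sigma$-invariant, hence descends to a rational function of $X$ alone, and I would pin it down by its principal parts. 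Using the definitions \eqref{eq:Xdef} together with the defining relations for the $A^{(i)},B^{(i)}$, one checks that this symmetric part can have poles only at $X\in\{0,-t,\infty\}$: the finite poles at $X=0,-t$ reproduce the rational shift (whose partial-fraction form $\tfrac{u_1+u_2}{2X}-\tfrac{u_1+u_2+v}{2(t+X)}$ already appears at $\epsilon=0$ in \cref{sec:spectral curve}), while the pole of order $D-1$ at $X=\infty$, coming from the $[\,\cdot\,]^{<0}$-parts of the $B^{(i)}$, reproduces $\tfrac{\epsilon}{2}V'(X)$. Matching principal parts and the value at $X=\infty$ then forces the identity, establishing anti-invariance.

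Granting anti-invariance, $\tilde Y^2$ is $\sigma$-invariant and therefore equals a rational function $h(X)$, whose divisor I determine. Its only poles are at $X=0,-t$ (each of order $2$, from the simple poles of $\tilde Y$ over these values) and at $X=\infty$ (of order $2D-2$, from the leading term $-\tfrac{\epsilon}{2}p_D X^{D-1}$ of $\tilde Y$), so $h(X)=N(X)/\big(4X^2(t+X)^2\big)$ with $\deg N=2D+2$. For the zeros I use anti-invariance decisively: every zero of $\tilde Y$ on $\Sigma$ is either a ramification point of $X$ (where $\tilde Y$ vanishes automatically) or occurs in a $\sigma$-conjugate pair $\{z_0,\sigma(z_0)\}$ with $z_0\notin\mathcal{R}$. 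A simple zero at a ramification point contributes a simple zero of $h$ in the variable $X$, whereas a conjugate pair contributes a double zero of $h$ at the common value $X(z_0)$. Since $X$ has degree two it has exactly two ramification points, accounting for the quadratic factor $(vX-a)(vX-b)$, while the remaining $2D$ zeros of $\tilde Y$ split into $D$ conjugate pairs and assemble into a perfect square $M(X)^2$ with $\deg M=D$; the count $2+2D=2D+2$ matches $\deg N$. A global zero/pole count of $\tilde Y$ on $\mathbb{P}^1_z$ (there are $2+2+2(D-1)=2D+2$ poles, over $X=0,-t,\infty$ respectively) confirms the expected $2D+2$ zeros, hence $2$ at ramification points plus $2D$ paired ones.

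The genericity hypothesis is precisely what keeps this divisor computation clean: it guarantees that the two ramification values are distinct ($a\neq b$), that the $D$ conjugate pairs lie over $D$ distinct values of $X$ (so $M$ has simple roots), and that none of these collide with $X=0,-t$ or with the ramification values, so that numerator and denominator of $h$ share no factor and $M$ has the stated degree $D$. Finally, setting $\epsilon=0$ collapses $V'$ and all the $\epsilon$-corrections in the $A^{(i)},B^{(i)}$, so $\tilde Y$ reduces to the anti-invariant function $\tilde y$ of \cref{sec:spectral curve}, whose square is $U_{0,1}(X)/(dX)^2=(vX-a)(vX-b)/\big(4X^2(t+X)^2\big)$ with no non-ramification zeros; this normalizes $M(X)|_{\epsilon=0}=1$.

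I expect the main obstacle to be the anti-invariance step, i.e.\ obtaining $Y+Y\circ\sigma$ in closed form, since the $A^{(i)},B^{(i)}$ are defined only implicitly through coupled relations and extracting their symmetric behavior at $X=\infty$ requires a careful analysis of the $[\,\cdot\,]^{<0}$-truncations. Once anti-invariance is in hand, the perfect-square factorization is a clean consequence of the $\sigma$-pairing of the zeros of $\tilde Y$, and the genericity conditions reduce to routine bookkeeping.
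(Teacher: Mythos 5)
Your second half---the divisor-theoretic argument that anti-invariance forces zeros of $\tilde Y$ to sit either at the two ramification points (simple factors $(vX-a)(vX-b)$) or in $\sigma$-conjugate pairs (the perfect square $M(X)^2$)---is correct and is a genuinely different, arguably more transparent, route than the step it would replace in the paper. However, the proposal as a whole has a real gap: everything rests on your Step 1 (anti-invariance of $\tilde Y$ plus the global pole structure of $Y$), and that step is only a plan, which you yourself flag as ``the main obstacle'' and never carry out. Two unproven inputs are load-bearing. First, the principal-parts matching for $\tfrac12\bigl(Y+Y\circ\sigma\bigr)$ requires knowing where the poles of the rational function $Y$ of \eqref{eq:Xdef} lie and what their orders and principal parts are; since the $A^{(i)},B^{(i)}$ are defined only implicitly through coupled truncation relations, extracting this amounts to re-deriving the spectral-curve analysis of \cite{BonzomChapuyCharbonnierGarcia-Failde2024} from scratch, which is not routine bookkeeping. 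Second, even granting anti-invariance, your zero/pole count needs that $\tilde Y$ has \emph{only} simple poles over $X=0,-t$, poles of order $D-1$ over $X=\infty$, and no poles elsewhere. The formal-series definition of $\phi^{D}_{0,1}$ controls $Y$ only near $z=0$, and anti-invariance pairs up poles but does not locate them: a pole at an arbitrary $w$ matched by one at $\sigma(w)$ is perfectly consistent with $\tilde Y\circ\sigma=-\tilde Y$. As a concrete warning, at $\epsilon=0$ the point $z=\infty$ lies over $X=-t$, so bounding the pole of $Y$ there requires degree bounds on $A^{(1)}B^{(1)}$---again internals of the construction that you assert rather than prove.

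The paper fills exactly this hole with the tool you never invoke: the constraints of \cref{theo:CDO}. Repeating the derivation of \cref{prop:FLE} with $\tilde p_k$ set to zero only for $k>D$ yields the $(0,1)$ loop equation \eqref{LE:(0,1) with}, and degree counting together with the shift \eqref{eq:Ydef} shows directly that $\tilde Y^2=P^D(X)/\bigl(4X^2(t+X)^2\bigr)$ for a polynomial $P^D$ of degree $2D+2$. With this in hand, anti-invariance is a \emph{consequence} rather than an input (if $\tilde Y\circ\sigma=+\tilde Y$ then $\tilde Y\in\mathbb{C}(X)$ and $(vX-a)(vX-b)M(X)^2$ would be a square in $\mathbb{C}(X)$, contradicting $a\neq b$), and the pole structure you need is read off from the right-hand side. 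At that point the paper concludes by a genus argument ($P^D$ must be two simple factors times a perfect square, else $\tilde Y$ would live on a positive-genus curve, contradicting that it is rational in $z$), and here your $\sigma$-pairing of zeros is a valid alternative. So the repair is clear: replace your Step 1 by the loop-equation derivation, after which your Steps 2--4 go through; the $\epsilon\to0$ normalization $M(X)|_{\epsilon=0}=1$ against $U_{0,1}$ in \eqref{U_{0,1}} is the same in both treatments.
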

\begin{proof}
	Let us consider the loop equation for $ \phi_{0,1}^D$, or equivalently $ Y$, given by
	\begin{equation*}
	  [\hbar^{1}]  \left(\frac{1}{\tau_G^{(\bb)}}\sum_{k\geq0}\frac{D_k (dx)^2 }{x^{k+2}} \tau_G^{(\bb)} \right)\Bigg|_{\tilde p_{k> D}=0}=0.
	\end{equation*}
     The only difference from \eqref{eq:LE1} is that we set $\tilde p_k=0$ only for $k>D$, hence one can mostly repeat computations in Lemma \ref{prop:FLE}. After some manipulation, we find:
       \begin{multline} \label{LE:(0,1) with}
        (t+X)  Y(X)^2+\left(\frac{t(u_1+u_2)}{X}-v-\epsilon(t+X)V'(X)\right) Y(X) \\
        + \frac{tu_1 u_2}{X^2} -\epsilon\left(\frac{t(u_1+u_2)}{X}-v\right)V'(X)+\epsilon\left[ (t+X)V'(X) Y(X)\right]_{\leq -2} = 0
       \end{multline}
      where for $f =
      \sum_{n=-\infty}^q a_n x^{n}$, we define $ [f]_{\leq p} := \sum_{n=-\infty}^p a_n x^{p}$. Although the last term depends on $ Y(X)$, one can show by degree counting of \eqref{LE:(0,1) with} and by shifting $ Y$ to $ \tilde Y$ that there exists a polynomial $P^D(X)$ of degree $2D+2$ such that $\tilde Y^2=\frac{P^D(X)}{4X^2(t+X)^2}$. 
      
      One can further constrain the form of $P^D(X)$ as follows. First, $\tilde Y(X)$ admits an analytic continuation to $\Sigma$ as $\tilde Y$ and $X$ satisfy an algebraic equation. Next, since $\tilde Y$ is a meromorphic function on $\Sigma=\mathbb{P}^1$, $P^D(X)$ can have at most two simple zeroes. The other zeroes must be double zeroes, i.e. $P^D(X)=(vX-a)(vX-b)M(X)^2$ --- otherwise $\tilde Y$ would be a function on a higher-genus curve. Finally, in the limit $\epsilon\to0$ while all $p_1,\dots,p_D$ are fixed, $\frac{P^D(X)(dX)^2}{4X^2(t+X)^2}$ should reduce to  $U_{0,1}(x)$ in \eqref{U_{0,1}}, hence we have $M(X)|_{\epsilon=0}=1$. Note that  $(\epsilon,p_1,\dots,p_D)$ is generic, so  all the roots of $M(X)$ are distinct and $a$ does not coincide with $b$.
          \end{proof}

\noindent 
From the form of equation \eqref{curve w/} for $\tilde Y(z)$, we see that the set $\mathcal P$ of poles and zeroes (excluding ramification points)  of the differential $(Y(z)-Y(\sigma(z)) dX(z)$ consists of the preimages of $X=\infty,0,-t$ and the preimages of all $D$ (generically distinct) roots of $M(X)$. Let us choose a splitting $\mathcal P_+ \sqcup \sigma(\mathcal P_+) = \mathcal P$ as follows. From the definition of $X(z)$ in equation \eqref{eq:Xdef}, we know that $z = 0$ is one of the preimages of $X(z) = \infty$, and we choose 
$z= 0$ to be in $\mathcal P_+$. In the limit $\epsilon \to 0$, we know that two preimages $a_1,a_1'$ of $X(z)=0$ tend to $a_1 = -u_1 + O(\epsilon)$, $a'_1 = -u_2 + O(\epsilon)$ respectively (see equation \eqref{eq:P}), and we choose $a_1 \in \mathcal P_+$. Similarly, from the preimages of $X(z) = -t $, we choose  $a_2 \in \mathcal P_+$ satisfying $a_2 = -\frac{u_1u_2}{u_1+u_2+v} + O(\epsilon)$.

Since $M(X)|_{\epsilon=0}=1$, all the roots of $M(X)$ must tend to $\infty$ as $\epsilon\to0$. This implies that for each root, one of the preimages  $b_i$ behaves as $b_i= O(\epsilon)$ in the limit $\epsilon\to0$ and we choose such $b_1,\ldots,b_D$ as elements of $\mathcal{P}_+$. With this choice, we are ready to define the refined spectral curve that we are interested in.
\begin{definition}\label{def:refinedSCint}
	We define the \textit{refined spectral curve} $\mathcal{S}_{\bm \mu}^{D}$ as the collection 
	\begin{itemize}
		\item $(\Sigma = \mathbb P^1_z,X,Y)$, where $X $ and $Y$ are defined in equations \eqref{eq:Xdef} and $\eqref{eq:Ydef}$ respectively;
		\item and, $(\mathcal P_+, \{\mu_a\}_{a\in \mathcal P_+})$ as 
		\begin{equation*}
			\setlength\arraycolsep{1pt}
			\begin{matrix}
				\mathcal{P}_+ & = & \{& 0,&a_1,&a_2,& b_1,&\ldots,& b_D&\}\\
				{\bm \mu} & = & \{&\mu_0,&\mu_{a_1},&\mu_{a_2}, &1,&\ldots,&1&\},
			\end{matrix}
		\end{equation*}
		where $\mu_0,\mu_{a_1}\,\mu_{a_2}\in\mathbb{C}$.
	\end{itemize} We  use  $\omega_{g,n}^D$ to denote the correlators constructed by refined topological recursion on $\mathcal{S}_{\bm \mu}^{D}$.
\end{definition} 

For convenience, we define $\tilde \omega^D_{0,1} : = \tilde Y(z) d X(z)$, which only differs from $\omega^D_{0,1}(z) = Y(z) dX(z)$ by a differential in $X(z)$.   \cref{lem:Y^2} implies that we can write $2 \tilde\omega^D_{0,1}(z) = \omega^D_{0,1}(z) - \omega^D_{0,1}(\sigma(z)) $. Let us now prove the following useful lemma that gives a different expression for $\omega^D_{\frac12,1}$ that is more suited for understanding its variations with respect to $\epsilon$.
\begin{lemma}\label{lem:1/21}
	The correlator $\omega^D_{\frac12,1}$ can be expressed as
	\begin{multline} \label{1/2,1 as as integral}
		\omega^D_{\frac{1}{2},1} (z_0)= \left( \frac{\bb}{2\pi \rm i} \oint_{z\in C_+} \frac{\eta^z(z_0)}{2 \tilde \omega_{0,1}^{D}(z)}dX(z)d\tilde Y(z) \right)-\frac{\bb}{2}(\mu_0-\kappa_0)\Res_{z=0}\eta^z(z_0)\frac{dX(z)}{X(z)} \\  +\frac{\bb}{2}\sum_{i=1}^2(\mu_{a_i}-\kappa_{a_i})\Res_{z=a_i}\eta^z(z_0)\frac{dX(z)}{X(z)-X(a_i)},
	\end{multline} where
	$\kappa_0 = 1-D$ and $\kappa_{a_1} = \kappa_{a_2} = -1$.
\end{lemma}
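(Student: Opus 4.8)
The plan is to start from the definition of the refined correlator $\omega^D_{\frac12,1}$ and unfold it using the formula \eqref{eq:1/2def} for the unstable correlator on the spectral curve $\mathcal S^D_{\bm\mu}$. By \cref{lem:Y^2} we have $2\tilde\omega^D_{0,1}=\omega^D_{0,1}(z)-\omega^D_{0,1}(\sigma(z))$, so that the first summand in \eqref{eq:1/2def}, namely $-\frac\bb2\frac{d(Y(z)-Y(\sigma(z)))}{Y(z)-Y(\sigma(z))}=-\frac\bb2\frac{d\tilde Y(z)}{\tilde Y(z)}$, should be recast as a contour integral. First I would use the fact that for any meromorphic function $f$ on $\Sigma=\mathbb P^1$, the logarithmic derivative $\frac{df}{f}$ is reproduced by a residue computation against $\eta^z(z_0)$: concretely, $\frac{1}{2\pi\mathrm i}\oint_{C_+}\eta^z(z_0)\,\frac{dX(z)\,d\tilde Y(z)}{2\tilde\omega^D_{0,1}(z)}=\frac{1}{2\pi\mathrm i}\oint_{C_+}\eta^z(z_0)\,\frac{d\tilde Y(z)}{2\tilde Y(z)}$, since $dX(z)/\tilde\omega^D_{0,1}(z)=1/\tilde Y(z)$. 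The idea is that this contour integral collects residues at the zeroes and poles of $\tilde Y$, reproducing $-\frac\bb2\frac{d\tilde Y}{\tilde Y}$ up to the contributions of those points of $\mathcal P$ that are enclosed.

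The key technical step is to carefully account for which poles lie inside $C_+$ and which do not, and to compare the residues that the contour integral produces against the definition $\omega^D_{\frac12,1}(z)=\frac\bb2\big(-\frac{d\tilde Y}{\tilde Y}+\sum_{c\in\mathcal P_+}\mu_c\,\eta^c(z)\big)$. The second step is therefore to compute, for each point $a\in\{0,a_1,a_2\}$, the local contribution of $\eta^z(z_0)\frac{d\tilde Y(z)}{2\tilde Y(z)}$ as a residue at $z=a$, and express it as a multiple of $\eta^a(z_0)$. Because $\tilde Y$ has a zero or pole of a definite order at each such point — dictated by \eqref{curve w/} and the explicit form of $X$ — this residue will be proportional to $\eta^a(z_0)$ with coefficient given by half the order of vanishing; I expect these half-orders to be exactly the constants $\kappa_0=1-D$ and $\kappa_{a_1}=\kappa_{a_2}=-1$ appearing in the statement. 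Here one uses $X(z)\sim$ (constant)$/z$ near $z=0$ and the behavior of $\tilde Y$ read off from $\tilde Y^2=\frac{(vX-a)(vX-b)M(X)^2}{4X^2(t+X)^2}$, noting that the degree-$D$ polynomial $M(X)$ contributes the shift by $D$ at the relevant point. The roots $b_1,\dots,b_D$ of $M$, already carrying $\mu_{b_i}=1$, are precisely cancelled against the matching $\kappa$-contributions and hence do not appear as separate correction terms.

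Assembling these pieces, the contour integral $\frac{\bb}{2\pi\mathrm i}\oint_{C_+}\frac{\eta^z(z_0)}{2\tilde\omega^D_{0,1}(z)}dX(z)\,d\tilde Y(z)$ equals $-\frac\bb2\frac{d\tilde Y}{\tilde Y}$ plus the spurious residue contributions $\frac\bb2\sum_{a}\kappa_a\,\eta^a(z_0)$ coming from the enclosed zeroes/poles of $\tilde Y$ at $0,a_1,a_2$. Subtracting off these spurious pieces and adding back the genuine $\sum_{c\in\mathcal P_+}\mu_c\eta^c(z_0)$ terms from the definition, the $b_i$ terms cancel and one is left with exactly the correction terms $-\frac\bb2(\mu_0-\kappa_0)\,\eta^0(z_0)$ and $\frac\bb2\sum_{i=1,2}(\mu_{a_i}-\kappa_{a_i})\,\eta^{a_i}(z_0)$; rewriting each $\eta^a(z_0)$ as a residue via \eqref{eq:etadef}, i.e. $\eta^a(z_0)=\Res_{z=a}\eta^z(z_0)\frac{dX(z)}{X(z)-X(a)}$ (with $X(a)=\infty$ at $a=0$ handled separately as $\Res_{z=0}\eta^z(z_0)\frac{dX(z)}{X(z)}$), yields the claimed formula.

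\medskip

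The main obstacle I anticipate is bookkeeping the signs and the precise orders of vanishing of $\tilde Y$ at the three special points, since the sign of each residue depends on whether $\tilde Y$ has a zero or a pole there and on the orientation of $C_+$; in particular verifying that the net half-order contributions are exactly $\kappa_0=1-D$ and $\kappa_{a_1}=\kappa_{a_2}=-1$, consistently with the factor $M(X)$ of degree $D$ and the double poles of $\tilde Y^2$ at $X=0,-t,\infty$, requires care. A secondary subtlety is justifying the conversion of $\eta^a(z_0)$ into the residue form against $\frac{dX(z)}{X(z)-X(a)}$ uniformly, including the degenerate case $a=0$ where $X(a)=\infty$.
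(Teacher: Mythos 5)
Your proposal is correct and is essentially the paper's own proof read in reverse: the paper starts from the definition of $\omega^D_{\frac12,1}$, writes $-d\tilde Y(z_0)/\tilde Y(z_0)$ as $\Res_{z=z_0}\eta^z(z_0)\,dX(z)\,d\tilde Y(z)/\tilde\omega^D_{0,1}(z)$, trades this single residue for the $C_+$ contour integral at the cost of the residues at $\mathcal P_+$ (each equal to $\kappa_c\,\eta^c(z_0)$ with $\kappa_c$ the order of $\tilde Y$ at $c$, read off from \eqref{curve w/} and the local behavior of $X$), cancels the $b_i$ terms against the choice $\mu_{b_i}=1$, and finally converts $\eta^0,\eta^{a_1},\eta^{a_2}$ into the stated residue expressions, with the sign flip at $z=0$ because $X\sim c/z$ there. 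The only bookkeeping correction to your outline: the constants $\kappa_c$ are the \emph{full} orders of vanishing of $\tilde Y$ (i.e.\ the residues of $d\tilde Y/\tilde Y$), not half-orders --- the $\tfrac12$ in $1/(2\tilde\omega^D_{0,1})$ is compensated by the overall prefactor being $\bb$ rather than $\tfrac{\bb}{2}$ once the contour integral is expanded into residues --- which is precisely the factor-of-two care you flagged at the end.
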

\begin{proof}
	We start with the definition of $\omega^D_{\frac{1}{2},1} $ and rewrite as 
\begin{align*} \nonumber 
	\omega^D_{\frac{1}{2},1} (z_0) &= \frac{\bb}{2}\left( -\frac{d\tilde Y(z_0)}{\tilde Y(z_0)} + \sum_{c \in \mathcal P_+} \mu_c \eta^c(z_0) \right) \\ \nonumber
	&=  \frac{\bb}{2}\left( \Res_{z=z_0}\frac{\eta^z(z_0)}{\tilde \omega_{0,1}^{D}(z)}dX(z)d\tilde Y(z) + \sum_{c \in \mathcal P_+} \mu_c \eta^c(z_0) \right) \\ \nonumber
	&=  \frac{\bb}{2}\left( \Res_{z=z_0, \mathcal P_+}\frac{\eta^z(z_0)}{\tilde \omega_{0,1}^{D}(z)}dX(z)d\tilde Y(z) + \sum_{c \in \mathcal P_+} (\mu_c-\kappa_c) \eta^c(z_0) \right) \\ \nonumber
	&= \left( \frac{\bb}{2\pi \rm i} \oint_{z\in C_+} \frac{\eta^z(z_0)}{2\tilde \omega_{0,1}^{D}(z)}dX(z)d\tilde Y(z) \right) + \frac{\bb}{2} \sum_{c = 0,a_1,a_2} (\mu_c-\kappa_c) \eta^c(z_0) \\  
	&= \left( \frac{\bb}{2\pi \rm i} \oint_{z\in C_+} \frac{\eta^z(z_0)}{2\tilde \omega_{0,1}^{D}(z)}dX(z)d\tilde Y(z) \right)-\frac{\bb}{2}(\mu_0-\kappa_0)\Res_{z=0}\eta^z(z_0)\frac{dX(z)}{X(z)} \\ \nonumber
	&\hspace{5cm} +\frac{\bb}{2}\sum_{i=1}^2(\mu_{a_i}-\kappa_{a_i})\Res_{z=a_i}\eta^z(z_0)\frac{dX(z)}{X(z)-X(a_i)},
\end{align*} where we use the property that $\eta^z(z_0)$ is a differential in $z_0$ with residue $\pm1$ at $z_0 = z, \sigma(z)$ to get the second line,  the form of $\tilde Y$ given by \eqref{curve w/} to  evaluate the residues at $z = \mathcal P_+$ and get the third line, and the form of $X$ \eqref{eq:Xdef} near $z = 0,a_1,a_2$ to get the last equality. We define $\kappa_c$ for $c \in \mathcal P_+$ as
\[
\left(\kappa_0,\kappa_{a_1},\kappa_{a_2},\kappa_{b_1},\cdots, \kappa_{b_D}\right)= \left(1-D,-1,-1,1,\ldots,1\right),
\]    and notice that we have used the specific choice of the parameters $\mu_{b_i} = 1$ to remove the terms corresponding to $c= b_i$ and get equation \eqref{1/2,1 as as integral}.
\end{proof}

\noindent Now we show that  the refined spectral curve $\mathcal{S}_{\bm \mu}^{D}$  satisfies the refined deformation condition.
\begin{proposition}\label{prop:deform}
    For any choice of $\mu_0,\mu_{a_1},\mu_{a_2} \in \mathbb C$, the refined spectral curve $\mathcal{S}_{\bm \mu}^{D}$ satisfies the refined deformation condition (\cref{def:deformation}) with respect to $\epsilon$, where we choose  the  contour $\gamma$  to be a small circle around $z = 0$ and the function $\Lambda$ to be $\Lambda =- V(X(z))$.
\end{proposition}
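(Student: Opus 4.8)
The plan is to verify the two parts of the refined deformation condition of \cref{def:deformation} in turn: the analytic condition \textbf{D1} on $\Lambda$, and the three variational identities \textbf{D2} for the unstable correlators $\omega^D_{0,1}$, $\omega^D_{0,2}$ and $\omega^D_{\frac12,1}$. For \textbf{D1}, I would first read off from \eqref{curve w/} that, since $M(X)$ has degree $D$, one has $\tilde Y(z)\sim\tfrac{v}{2}(\text{const})\,X(z)^{D-1}$ as $X(z)\to\infty$, so that the anti-invariant differential $\bigl(Y(z)-Y(\sigma(z))\bigr)dX(z)=2\tilde Y(z)\,dX(z)$ has a pole of order $D+1$ at $z=0$; hence $z=0\in\mathcal{P}^\infty$ with $D_0=D$ (and likewise at $\sigma(0)$). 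Since $\Lambda=-V(X(z))=-\sum_{i=1}^D\tfrac{p_i}{i}X(z)^i$ has a pole of order exactly $D$ at the two preimages of $X=\infty$ and is holomorphic elsewhere, it is holomorphic along the small circle $\gamma$ around $z=0$ and its poles lie in $\mathcal{P}^\infty$ with order $\le D_0$, as required.

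With the convention $\int_{z\in\gamma}f=\Res_{z=0}f$, the genus-zero identities \textbf{D2} for $\omega^D_{0,1}$ and $\omega^D_{0,2}$ are repackagings of the unstable variational formulas of \cref{thm:unstablevariation}, which apply here because refinement does not alter genus-zero correlators and because $\tilde Y$ shares its anti-invariant part with $Y$, so the stable genus-zero correlators of $\mathcal S^D_{\bm\mu}$ coincide with those of the unrefined curve. Concretely, using $\omega^D_{0,2}(z,w)=-B(z,\sigma(w))$ together with \eqref{eq:BsigmaB}, the bracketed expression $\omega^D_{0,2}(z,z_0)+\tfrac{dX(z)dX(z_0)}{(X(z)-X(z_0))^2}$ appearing in \cref{thm:unstablevariation} collapses to $B(z,z_0)$, so that $\delta^{(1)}_\epsilon\omega^D_{0,1}(z_0)=-\Res_{z=0}V(X(z))\,B(z,z_0)$. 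Antisymmetrizing in the external point and using $\sigma^2=\mathrm{id}$ then yields the first identity of \textbf{D2}; here the only point to check is that $\delta^{(1)}_\epsilon$ commutes with the pullback $\sigma^*$, which holds because $\sigma$ is defined by $X\circ\sigma=X$ while $\delta^{(1)}_\epsilon$ differentiates at fixed $X$, so ``the other sheet'' is canonically identified along the deformation. The second identity is literally the second formula of \cref{thm:unstablevariation}.

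The genuinely refined content, and the main obstacle, is the third identity $\delta^{(1)}_\epsilon\omega^D_{\frac12,1}(z_1)=-\Res_{z=0}V(X(z))\,\omega^D_{\frac12,2}(z,z_1)$. Here I would start from the expression for $\omega^D_{\frac12,1}$ in \cref{lem:1/21} and differentiate it, tracking the $\epsilon$-variation of $\tilde\omega^D_{0,1}$ and $d\tilde Y$ (controlled by the genus-zero variations just established), of the Cauchy kernels $\eta^z$, and crucially of the marked points $0,a_1,a_2$ themselves, which move with $\epsilon$. For the right-hand side I would expand $\omega^D_{\frac12,2}$ by the refined topological recursion \eqref{eq:RTR}, whose kernel $\operatorname{Rec}^\omega_{\frac12,2}(z;z_1)=2\omega^D_{\frac12,1}(z)\,\omega^D_{0,2}(z,z_1)+\tfrac{\omega^D_{\frac12,1}(z)\,dX(z)dX(z_1)}{(X(z)-X(z_1))^2}+\bb\,d_z\tfrac{\omega^D_{0,2}(z,z_1)}{dX(z)}$ is linear in $\omega^D_{\frac12,1}$, and then interchange the outer residue $\Res_{z=0}V(X(z))(\cdots)$ with the topological-recursion contour. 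This interchange replaces the kernel $\eta^z(w)$ by the deformed kernel $-\Res_{w=0}V(X(w))\,\eta^z(w)=\int_{\sigma(z)}^{z}\delta^{(1)}_\epsilon\omega^D_{0,1}$, the last equality again being \cref{thm:unstablevariation}; matching this against the differentiated \cref{lem:1/21} is where the bulk of the work lies.

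The hardest points I anticipate are twofold. First, controlling the double contour/residue interchange together with the $\epsilon$-dependence of the involution $\sigma$ and of the ramification points, so that the deformed kernel assembles correctly into $\delta^{(1)}_\epsilon$ of the two terms $-\tfrac{\bb}{2}\tfrac{d\tilde Y}{\tilde Y}$ and $\tfrac{\bb}{2}\sum_c\mu_c\eta^c$ in $\omega^D_{\frac12,1}$. Second, verifying that the free parameters $\mu_0,\mu_{a_1},\mu_{a_2}$ drop out: both sides depend linearly on each $\mu_c$ (through the term $\tfrac{\bb}{2}\mu_c\eta^c$ on the left, and through the factor $\omega^D_{\frac12,1}(z)$ inside $\operatorname{Rec}^\omega_{\frac12,2}$ on the right), and the identity must hold coefficientwise in the $\mu_c$. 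This coefficientwise cancellation is exactly what the statement ``for any choice of $\mu_0,\mu_{a_1},\mu_{a_2}$'' encodes, and I would isolate it by splitting $\omega^D_{\frac12,1}$ into its $\mu$-independent contour part and the $\mu_c$-linear residue corrections supplied by \cref{lem:1/21}, checking each piece separately.
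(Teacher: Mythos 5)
Your verification of \textbf{D1} and of the two genus-zero identities in \textbf{D2} is correct and matches the paper: the pole of $2\tilde Y\,dX$ at $z=0$ has order $D+1$ by \eqref{curve w/}, $\Lambda=-V(X)$ has a pole of order $D$ there, and the $\omega_{0,1},\omega_{0,2}$ variations are exactly \cref{thm:unstablevariation} rewritten via \eqref{eq:BsigmaB}. Your strategy for the third identity --- differentiate the representation of $\omega^D_{\frac12,1}$ from \cref{lem:1/21}, expand $\omega^D_{\frac12,2}$ by the recursion, and interchange the residue at the insertion point with the recursion contour --- is also the paper's strategy.

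However, the proposal stops exactly where the content of the proposition begins: the identity $\delta^{(1)}_\epsilon\omega^D_{\frac12,1}(z_0)=-\Res_{z_1=0}V(X(z_1))\,\omega^D_{\frac12,2}(z_0,z_1)$ is never established; you defer the matching as ``where the bulk of the work lies'' and list anticipated difficulties without resolving them. The paper closes this in two concrete steps. First, applying \cite[Lemma A.3]{Osuga2024} to the contour-integral term of \eqref{1/2,1 as as integral}, and the same technique to the residue corrections at $0,a_1,a_2$ (whose contours can be chosen $\epsilon$-independent), the variation assembles into the closed formula \eqref{eq:1/21var}: a $C_+$-integral of $\frac{\eta^z(z_0)}{2\tilde \omega^D_{0,1}(z)}\bigl(2\omega^D_{\frac12,1}(z)\,\delta^{(1)}_\epsilon\tilde \omega^D_{0,1}(z)+\bb\,dX(z)\,d\delta^{(1)}_\epsilon\tilde Y(z)\bigr)$, the point being that the varied pieces recombine into $\omega^D_{\frac12,1}(z)$ itself inside the integrand, for arbitrary $\mu_0,\mu_{a_1},\mu_{a_2}$; this is also where the choice $\mu_{b_i}=1$, already built into \cref{lem:1/21}, is used, since it kills the corrections at the points $b_i$ that move with $\epsilon$. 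Second, writing $\omega^D_{\frac12,2}$ via the $C_+$-only form \eqref{eq:RTRequiv} of the recursion (the extra $\frac{dX\,dX_1}{(X-X_1)^2}$ term drops by a vanishing contour-integral identity), commuting $\Res_{z_1=0}$ with the $C_+$-integral (both $0$ and $z_1$ lie inside $C_+$), and using the genus-zero variational formulas lands on exactly the same expression \eqref{eq:1/21var}. In particular, the coefficientwise-in-$\mu_c$ cancellation you worry about never needs to be checked separately: all $\mu$-dependence on both sides sits in the single factor $\omega^D_{\frac12,1}(z)$, so equality of the two integral expressions holds for every choice of $\mu_0,\mu_{a_1},\mu_{a_2}$ at once. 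A small correction as well: $z=0$ does not move with $\epsilon$ (it is the $\epsilon$-independent preimage of $X=\infty$); only $a_1,a_2$ and the $b_i$ do.
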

\begin{proof}
	We need to check Conditions $\textbf{D1}$ and $\textbf{D2}$ of Definition \ref{def:deformation}.
    First, the function $\Lambda(z)=-V(X(z))$ only has poles at the preimages of $X=\infty$, and the poles are of order $D$. Since $\omega_{0,1}^{D} \sim X^{D-1} dX$ near $X = \infty$, it has a pole of order $D+1$. Thus, Condition $\textbf{D1}$ of Definition \ref{def:deformation} holds.
    
    As for condition $\textbf{D2}$, the required variations of $\omega_{0,1}$ and $\omega_{0,2}$ are known thanks to the results of \cite{BonzomChapuyCharbonnierGarcia-Failde2024} (see \cref{thm:variation}).  Thus, all we need to check is the following  formula for the variation of $\omega_{\frac{1}{2},1}$ w.r.t $\epsilon$:
    \begin{equation}\label{eq:toshow}
    	\delta_\epsilon^{(1)}\omega^D_{\frac12,1}(z_0)= - \Res_{z=0} \Lambda(z)\,\omega^D_{\frac12,2}(z,z_0).
    \end{equation}

We will apply the variational operator $\delta^{(1)}_{\epsilon}$ to  the expression for $\omega^D_{\frac12,1}$ derived in equation \eqref{1/2,1 as as integral}. For the first term in the RHS, we can apply \cite[Lemma A.3]{Osuga2024} to get
\begin{multline*}
    \delta^{(1)}_{\epsilon}\left(\frac{\bb}{2 \pi \rm i}\oint_{z\in C_+}\frac{\eta^z(z_0)}{2\tilde \omega_{0,1}^{D}(z)}dX(z)d\tilde Y(z)\right) \\
    =\frac{\bb}{2 \pi \rm i}\oint_{z\in C_+}\frac{\eta^z(z_0)}{2\tilde \omega_{0,1}^{D}(z)}\left(dX(z)d\delta^{(1)}_{\epsilon}\tilde Y(z)+\frac{2\delta_\epsilon^{(1)}\tilde \omega_{0,1}^{D}(z)}{2 \pi \rm i}\oint_{\check z\in C_+}\frac{\eta^{\check z}(z)}{2\tilde \omega_{0,1}^{D}(\check z)}dX(\check z)d\tilde Y(\check z)\right),
\end{multline*} where the first term comes from the variation of $\tilde Y(z)$ in the numerator and the second term comes from the variation of $\frac{\eta^z(z_0)}{2\tilde \omega^D_{0,1}(z)}$. As for the second term in the RHS of \eqref{1/2,1 as as integral} which corresponds to $c=0$, we have
\begin{align*}
    \delta^{(1)}_{\epsilon}\left(\Res_{z=0}\eta^z(z_0)\frac{dX(z)}{X(z)}\right)&=\Res_{z=0}\frac{dX(z)}{X(z)}\delta_\epsilon^{(2)}\eta^z(z_0) \\
    &=\frac{1}{2 \pi \rm i}\oint_{z\in C_+}\frac{\eta^z(z_0)}{2\tilde \omega_{0,1}^{D}(z)} \left( 2\delta_\epsilon^{(1)} \tilde \omega_{0,1}^{D}(z) \Res_{\check z=0}\eta^{\check z}(z)\frac{dX(\check z)}{X(\check z)} \right),
\end{align*}
where, in the first line $\delta^{(1)}_{\epsilon}$ commutes with the residue as the contour can be chosen independent of $\epsilon$. To get the second equality, we follow the same  technique as the proof of \cite[Lemma A.3]{Osuga2024}. Analogous results hold for the contributions from $c=a_1,a_2$. All together, we find
\begin{equation}\label{eq:1/21var}
    \delta_\epsilon^{(1)}\omega_{\frac12,1}^{D}(z_0)=\frac{1}{2\pi \rm i}\int_{z\in C_+} \frac{\eta^z(z_0)}{2\tilde \omega^D_{0,1}(z)}\left(2\omega^D_{\frac12,1}(z)\;\delta_\epsilon^{(1)} \tilde \omega^D_{0,1}(z)+\bb dX(z)d\delta_\epsilon^{(1)}\tilde Y(z)\right),
\end{equation} where we have obtained $\omega^D_{\frac12,1}(z)$ inside the contour integral by using the expression \eqref{1/2,1 as as integral}.

Let us compare this with the form of $\omega_{\frac12,2}^D$. We take the definition of $\omega_{\frac12,2}^D$ as given by the refined topological recursion formula in \eqref{eq:RTRequiv} and rewrite as
\begin{multline}\label{eq:1/22equiv}
	 \omega^D_{\frac12,2}(z_0,z_1)
	 =\frac{1}{2\pi \rm i}\oint_{z\in C_+} \frac{\eta^z(z_0)}{2\tilde \omega_{0,1}^{D}(z)} \left( \omega^D_{\frac12,1}(z) \left(\omega^D_{0,2}(z,z_1)-\omega^D_{0,2}(\sigma(z),z_1)\right) 
	 \right.
	\\ + \left. \bb dX(z)d_z\frac{\omega^D_{0,2}(z,z_1)-\omega^D_{0,2}(\sigma(z),z_1)}{2dX(z)}\right),
\end{multline} 
using the following formula
    \begin{equation*}
        \oint_{z\in C_+}\frac{\eta^z(z_0)}{2\tilde \omega_{0,1}^{D}(z)}\bb dX(z)d_z\left(\frac{dX(z_1)}{(X(z)-X(z_1))^2}\right)=0,
    \end{equation*} which holds as integrating along $z\in C_+$ gives the same contribution as integrating along $z \in C_-$, but on the other hand, integrating along $z \in C_+ \cup C_-$ gives zero as it contains all the poles of the integrand.

Finally, we  compute the RHS of equation \eqref{eq:toshow} using  the expression for $\omega_{\frac12,2}$ derived in \eqref{eq:1/22equiv}: 
    \begin{equation*}
       - \Res_{z_1=0}\Lambda(z_1) \omega_{\frac12,2}(z_0,z_1)=\frac{1}{2\pi i}\int_{z\in C_+} \frac{\eta^z(z_0)}{2\tilde \omega^D_{0,1}(z)}\left(2\omega^D_{\frac12,1}(z)\;\delta_\epsilon^{(1)}\tilde \omega^D_{0,1}(z)+\bb dX(z)d\delta_\epsilon^{(1)}\tilde Y(z)\right),
    \end{equation*}  where we use the fact that both $z=0$ and $z=z_1$ are contained in $C_+$ to commute the contour integral with respect to $z$ and the residue at $z_1=0$. The RHS of this equation matches the RHS of equation \eqref{eq:1/21var} which proves the variational formula \eqref{eq:toshow} for $\omega^D_{\frac12,1}$  and thus the lemma.
\end{proof}

\subsection{Refined topological recursion with internal faces}

We need two more lemmas to prove that the refined topological recursion correlators $\omega_{g,n}^D$ are the analytic continuations of the $\phi^D_{g,n}$  to the refined spectral curve $\mathcal S^D_{\bm \mu}$.  

\begin{lemma}\label{lem:alpha-derivative}
    Let $\{P_i\}_{i\in\{1,\ldots,n\}}$ be a set of polynomials whose coefficients do not depend on $\epsilon$. Then, for a symmetric meromorphic $n$-differential $\omega$ on $\mathbb P^1_z$, we have
    \begin{multline*}
     \frac{\partial}{\partial\epsilon} \left(\Res_{z_1=0}\cdots\Res_{z_n=0}\, P_1(X(z_1))\cdots P_n(X(z_n))\,\omega(z_1,\ldots,z_n)\right)\\
     =\Res_{z_1=0}\cdots\Res_{z_n=0}\, P_1(X(z_1))\cdots P_n(X(z_n))\,\delta_\epsilon^{(n)}\,\omega(z_1,\ldots,z_n).
\end{multline*}

\end{lemma}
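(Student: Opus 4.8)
The plan is to move the $\epsilon$-derivative inside the iterated residue and then absorb the ``extra'' terms into the variational operator. Since each $\Res_{z_i=0}$ is the contour integral $\tfrac{1}{2\pi\mathrm i}\oint_{z_i=0}$ around a fixed point of $\Sigma=\mathbb{P}^1_z$, and the contours can be taken independent of $\epsilon$, the operator $\partial/\partial\epsilon$ commutes with all the residues. Writing $\omega=\mathcal W\,\prod_i dX(z_i)$ and abbreviating $\dot X(z):=\partial_\epsilon X(z)$ and $X'(z):=\partial_z X(z)$, it therefore suffices to show that the multi-residue of
\[
\partial_\epsilon\Bigl(\mathcal W\,\prod_i P_i(X(z_i))\,dX(z_i)\Bigr)-\Bigl(\delta_\epsilon^{(n)}\mathcal W\Bigr)\,\prod_i P_i(X(z_i))\,dX(z_i)
\]
vanishes, where we recall $\delta_\epsilon^{(n)}\mathcal W=\partial_\epsilon\mathcal W-\sum_j \dot X(z_j)\tfrac{\partial_{z_j}\mathcal W}{X'(z_j)}$.

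First I would expand by the Leibniz rule. The terms in which $\partial_\epsilon$ differentiates the function $\mathcal W$ cancel against the first piece of $\delta_\epsilon^{(n)}\mathcal W$, and the above difference collapses to a single sum over $j$ of contributions of the shape
\[
\Bigl(\dot X(z_j)\,\partial_{z_j}\mathcal W\,P_j(X(z_j))+\mathcal W\,\partial_\epsilon\bigl(P_j(X(z_j))X'(z_j)\bigr)\Bigr)\,dz_j\;\prod_{i\neq j}P_i(X(z_i))\,dX(z_i),
\]
where I have used $dX(z_j)=X'(z_j)\,dz_j$ to convert everything in the $j$-th variable into a $1$-form in $dz_j$.

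The key step is to recognize each such summand as an exact differential in $z_j$. Commuting $\partial_{z_j}$ with $\partial_\epsilon$ on the meromorphic function $X$ (so that $\partial_{z_j}\dot X=\partial_\epsilon X'$), a direct application of the product rule gives
\[
\partial_{z_j}\bigl(\mathcal W\,P_j(X(z_j))\,\dot X(z_j)\bigr)=\dot X(z_j)\,\partial_{z_j}\mathcal W\,P_j(X(z_j))+\mathcal W\,\partial_\epsilon\bigl(P_j(X(z_j))X'(z_j)\bigr),
\]
which is exactly the bracketed factor above. Hence the $j$-th summand equals $\partial_{z_j}\bigl(\mathcal W\,P_j(X(z_j))\,\dot X(z_j)\bigr)\,dz_j$ times the remaining $1$-forms. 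Since $\mathcal W$, $X$ and the $P_i$ are rational in $z$ on $\Sigma=\mathbb{P}^1$, this is an exact, single-valued meromorphic differential in $z_j$, and $\Res_{z_j=0}$ of an exact differential is zero. As the residues in distinct variables commute, the full multi-residue of each summand vanishes, and summing over $j$ completes the proof.

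The computation is essentially forced, so I do not anticipate a genuine obstacle; the only points requiring care are the justification that $\partial_\epsilon$ commutes with the residue contours (which uses $\epsilon$-independence of the contours) and the verification that the per-variable integrand is a total $z_j$-derivative of the manifestly meromorphic function $\mathcal W\,P_j(X(z_j))\,\dot X(z_j)$, so that its residue vanishes. This total-derivative identity is the crux, and is precisely what converts the naive $\partial_\epsilon$ into the variational operator $\delta_\epsilon^{(n)}$.
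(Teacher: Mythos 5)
Your proof is correct and follows essentially the same route as the paper: commute $\partial_\epsilon$ with the residues (contours chosen independent of $\epsilon$), expand by the Leibniz rule, and recognize the discrepancy between $\partial_\epsilon$ and $\delta_\epsilon^{(n)}$ as an exact meromorphic differential $d_{z_j}\bigl(\mathcal{W}\,P_j(X(z_j))\,\partial_\epsilon X(z_j)\bigr)$ in each variable, whose residue vanishes. The only cosmetic difference is that you carry out the general-$n$ computation explicitly, whereas the paper does $n=1$ and notes that the argument repeats verbatim.
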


\begin{proof}
    When $n=1$, let $\mathcal{W}$ be the associated function of $\omega$, i.e. $\omega(z)=\mathcal{W}(z)dX(z)$. Then, 
\begin{align}
     \frac{\partial}{\partial\epsilon}& \Res_{z=0}P(X(z))\omega(z) = \Res_{z=0}\frac{\partial}{\partial\epsilon} \Big(P(X(z))\mathcal{W}(z)dX(z)\Big)\nonumber\\
   &= \Res_{z=0}\left(\frac{\partial X(z)}{\partial\epsilon}\frac{\partial P(X(z))}{\partial X(z)}\mathcal{W}(z)dX(z)+P(X(z))\frac{\partial\mathcal{W}(z)}{\partial\epsilon}dX(z)+P(X(z))\mathcal{W}(z)d\frac{\partial X(z)}{\partial\epsilon}\right)\nonumber\\
   &=\Res_{z=0}\left(d\left(P(X(z))\mathcal{W}(z)\frac{\partial X(z)}{\partial\epsilon}\right)+P(X(z))\left(\frac{\partial\mathcal{W}(z)}{\partial\epsilon}-\frac{\partial X(z)}{\partial\epsilon}\frac{d\mathcal{W}(z)}{dX(z)}\right)dX(z)\right)\nonumber\\
   &=\Res_{z=0}P(X(z))\;\delta_\epsilon^{(1)}\,\omega(z)\nonumber
\end{align}
The first equality holds because one can always choose a contour encircling $z=0$ independent of $\epsilon$. The second equality is due to the assumption that $P(X)$ does not depend on $\epsilon$. The fourth equality holds because the first term is the total derivative of a meromorphic function. One can repeat the same argument when $n\geq2$.
\end{proof}
 The next lemma shows that the  correlators $\omega^D_{g,n}$ on the spectral curve $\mathcal S^D_{\bm \mu}$ reduce to the  correlators $\omega_{g,n}$ on the spectral curve $\mathcal S_{\bm \mu}$ of \cref{sec:spectral curve} (after specializing the  $\bm \mu$) upon setting $\epsilon = 0$.
\begin{lemma}\label{lem:limit}
    Set $(\mu_0,\mu_{a_1},\mu_{a_2})=(-1-D,0,0)$. Then, for every $g\in\frac12\mathbb{Z}_{\geq0}$ and $n\in\mathbb{Z}_{\geq0}$, we have
    \begin{equation*}
        \omega_{g,n+1}^{D}\Big|_{\epsilon=0}=\omega_{g,n+1}.
    \end{equation*} 
\end{lemma}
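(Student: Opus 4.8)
The plan is to exploit that refined topological recursion is a deterministic construction out of the refined spectral curve data that is continuous (indeed algebraic) in $\epsilon$: once I know that the data of $\mathcal{S}^D_{\bm\mu}$ at $\epsilon=0$ (for the prescribed values $(\mu_0,\mu_{a_1},\mu_{a_2})=(-1-D,0,0)$) reduce to those of $\mathcal{S}_{\bm\mu}$, equality of all the $\omega^D_{g,n}$ with the $\omega_{g,n}$ will follow by induction on $2g-2+n$ through the recursion \eqref{eq:RTR}. So I would first reduce to matching the unstable correlators $\omega^D_{0,1},\omega^D_{0,2},\omega^D_{\frac12,1}$ together with the involution $\sigma$, the bidifferential $B$, the forms $\eta^c$, and the integration contours. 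At $\epsilon=0$ all three series $B^{(i)}$ collapse to $1$, so the $A^{(i)}$ become the explicit affine functions $A^{(1)}=1+\tfrac{u_2t}{v}z$, $A^{(2)}=1+\tfrac{u_1t}{v}z$, $A^{(3)}=1-\tfrac{u_1u_2t}{v^2}z$. Feeding these into \eqref{eq:Xdef}, \eqref{eq:Ydef} and invoking \cref{lem:Y^2} with $M(X)|_{\epsilon=0}=1$ shows that $(X,\tilde Y)|_{\epsilon=0}$ satisfies the same curve equation \eqref{curve w/o} with the same $U_{0,1}$ as $(x,\tilde y)$, thereby identifying the two refined spectral curves and forcing $\omega^D_{0,1}|_{\epsilon=0}=\omega_{0,1}$, $\omega^D_{0,2}|_{\epsilon=0}=\omega_{0,2}$.

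The heart of the matter is $\omega^D_{\frac12,1}$, and this is exactly where the choice $\mu_0=-1-D$ is pinned down. I would track the points of $\mathcal{P}_+$ as $\epsilon\to0$: by \eqref{eq:P}, $a_1\to -u_1$ and $a_2\to-\tfrac{u_1u_2}{u_1+u_2+v}$, while (as noted before \cref{def:refinedSCint}) each $b_i=O(\epsilon)$, so the $D$ points $b_1,\dots,b_D$ all collide with $0\in\mathcal{P}_+$. Since $\mu_{a_1}=\mu_{a_2}=0$ those points drop out of $\sum_{c\in\mathcal{P}_+}\mu_c\eta^c$, whereas $\mu_{b_i}=1$ and, because $c\mapsto\eta^c$ (see \eqref{eq:etadef}) is continuous away from $\mathcal{R}$ and $0\notin\mathcal{R}$, one has $\eta^{b_i}\to\eta^0$. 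Hence
\[
\sum_{c\in\mathcal{P}_+}\mu_c\,\eta^c \;\xrightarrow[\epsilon\to0]{}\; (-1-D)\,\eta^0 + D\,\eta^0 = -\eta^0 ,
\]
so that, combining with $-d\tilde Y/\tilde Y\to -d\tilde y/\tilde y$ and the definition \eqref{eq:1/2def}, I obtain $\omega^D_{\frac12,1}|_{\epsilon=0}=\tfrac{\bb}{2}\bigl(-d\tilde y/\tilde y-\eta^0\bigr)=\omega_{\frac12,1}$ by \eqref{1/2,1 w/o}.

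With the unstable correlators matched, I would close the induction using \eqref{eq:RTR}: the recursion kernel $\eta^z(z_0)/\bigl(2(\omega_{0,1}(z)-\omega_{0,1}(\sigma(z)))\bigr)$ and the object $\operatorname{Rec}^\omega_{g,1+n}$ are assembled only from $\eta$, $\tilde\omega^D_{0,1}$, $B$ and lower correlators, all of which reduce to their $\mathcal{S}_{\bm\mu}$-counterparts at $\epsilon=0$ by the induction hypothesis. It then remains to control the contours in the limit: $C_+$ can be chosen to enclose $\{0,a_1,a_2,b_1,\dots,b_D\}$, and since $b_i\to0$ and $\sigma(b_i)\to v=\sigma(0)$ with $0,v\notin\mathcal{R}$, it degenerates to a contour enclosing exactly $\{0,-u_1,-\tfrac{u_1u_2}{u_1+u_2+v}\}$, i.e.\ the $\mathcal{P}_+$ of $\mathcal{S}_{\bm\mu}$, and similarly for $C_-$. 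Thus the recursion for $\mathcal{S}^D_{\bm\mu}$ at $\epsilon=0$ coincides with that for $\mathcal{S}_{\bm\mu}$, giving $\omega^D_{g,1+n}|_{\epsilon=0}=\omega_{g,1+n}$.

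The main obstacle I anticipate is precisely this collision of $b_1,\dots,b_D$ with $0$ as $\epsilon\to0$: one must check both that it creates no spurious poles or residues in the recursion (which holds because $0$ is an ordinary, non-ramification preimage of $X=\infty$ and the $\mu_{b_i}$ take the trivial value $1$), and that the merged $\eta^{b_i}$-contributions are exactly absorbed by shifting $\mu_0$ from $-1$ to $-1-D$, as computed above. Should the naive degeneration of the contour be delicate, a cleaner route is to run the induction on the explicit residue form \eqref{eq:omegagn} of the recursion (valid for $\mathcal{S}^D_{\bm\mu}$ by \cite{KidwaiOsuga2023}), in which every term converges manifestly as $\epsilon\to0$.
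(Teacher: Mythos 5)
Your proposal is correct and follows essentially the same route as the paper's proof: match the unstable correlators (with the key computation $(-1-D)\eta^0 + \sum_i \eta^{b_i}|_{\epsilon=0} = (-1-D)\eta^0 + D\eta^0 = -\eta^0$ exploiting $b_i = O(\epsilon)$ and $\mu_0=-1-D$), then induct on $2g-2+n$ through the recursion formula after choosing contours $C_\pm$ independent of $\epsilon$ so that the limit commutes with the contour integrals. The only cosmetic difference is that you rederive $X|_{\epsilon=0}=x$ and $\tilde Y|_{\epsilon=0}=\tilde y$ from the collapse of the $B^{(i)}$ series and the curve equation, whereas the paper simply cites the construction of Bonzom--Chapuy--Charbonnier--Garcia-Failde for this fact.
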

\begin{proof}
    By the construction of \cite{BonzomChapuyCharbonnierGarcia-Failde2024}, we know that $X|_{\epsilon=0}=x$ and $Y|_{\epsilon=0}=y$ (and $\tilde Y|_{\epsilon=0}=\tilde y$ ), and in particular, $\omega_{0,1}^{D}|_{\epsilon=0}=\omega_{0,1}$. It is also clear that $\omega_{0,2}^{D}|_{\epsilon=0}=\omega_{0,2}$ thanks to the relation \eqref{eq:BsigmaB}. For the above choice of ${\bm \mu}$, we have
    \begin{equation*}
         \omega_{\frac12,1}^{D}:=\frac{\bb}{2}\left(-\frac{d\tilde Y}{\tilde Y}-(1+D)\eta^0+\sum_{i=1}^D\eta^{b_i}\right).
    \end{equation*}
   Since each $b_i=\mathcal{O}(\epsilon)$ as $\epsilon\to0$, we have $\eta^{b_i}|_{\epsilon=0}=\eta^0$, and thus $\omega_{\frac12,1}^{D}|_{\epsilon=0}=\omega_{\frac12,1}$. 

   We proceed by induction on $(2g-2+n)$. Assume that the statement holds for all $(g',n')$ with $(2g'-2+n')< (2g-2+n)$. Then consider the refined topological recursion formula for  $\omega_{g,n}$ and consider the contour $C_+$ in the  formula. Recall that $\mathcal P_+ = \left\{ 0,a_1,a_2,b_1,\ldots,b_D\right\}$. For sufficiently small $\epsilon$, the elements $b_i \in \mathcal P_+$ for $i \in [D]$ are  close to the origin, while the elements $a_1,a_2$ are  close to $-u_1,-\frac{u_1u_2}{u_1+u_2+v}$ respectively.  Thus we can choose $C_+$ to be a contour (independent of $\epsilon$) such that  all the points in $\mathcal{P}_+$ remain inside $C_+$ in the limit $\epsilon\to0$. This implies that we can commute the limit $\epsilon\to0$ through the contour integral along $C_+$. Finally, we use the induction hypothesis to conclude the proof.
\end{proof}

 We are now ready to prove the main theorem of this section. Recall that the differentials $\phi^D_{g,n} $ were defined on the $n$-th product of formal disks centered around $X = \infty$. We view the $\phi^D_{g,n} $ as defined on a formal neighborhood of $z = 0$ (where $X(z) = \infty$) on the refined spectral curve $\mathcal S_{\bm \mu}$ by identifying this formal neighborhood with the aforementioned formal disk, via the identification of $X$ with the function $X(z)$ on the spectral curve.
\begin{theorem}\label{thm:w/}
Set $(\mu_0,\mu_{a_1},\mu_{a_2})=(-1-D,0,0)$. For every $(g,n) \in \frac{1}{2} \mathbb Z_{\geq 0} \times \mathbb Z_{\geq1}$, the refined topological recursion correlators  $\omega^D_{g,n}$ on the refined spectral curve $\mathcal S_{\bm \mu}$ are the analytic continuations of the shifted generating functions $\phi^D_{g,n}$ to $\Sigma^n$ under the identification  $X=X(z)$ (up to an explicit shift for $\omega_{0,1}$ and $\omega_{\frac{1}{2},1}$ given below).  In particular, 
\begin{multline}\label{eq:intfacesthm}
	\omega_{g,n}^D(z_1,\dots, z_n) - \delta_{g,0}\delta_{n,1} \epsilon V'(X(z_1))dX(z_1) - \delta_{g,\frac{1}{2}}\delta_{n,1}  \left(\frac{\bb dX(z_1)  }{2(t+X(z_1))}+\frac{\bb dX(z_1) }{2X(z_1)}\right) \\
	= \sum_{\mu_1,\dots, \mu_n \geq 1} F^D_{g,n}[\mu_1,\dots, \mu_n;\epsilon] \prod_{i=1}^n \frac{d X(z_i)}{X(z_i)^{\mu_i +1}},
\end{multline} as series expansions near  $z_i = 0$.
\end{theorem}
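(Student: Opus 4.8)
The plan is to prove the equivalent statement that the globally meromorphic differential
$\Omega_{g,n} := \omega^D_{g,n} - \delta_{g,0}\delta_{n,1}\,\epsilon V'(X)\,dX - \delta_{g,\frac12}\delta_{n,1}\bigl(\tfrac{\bb\,dX}{2(t+X)}+\tfrac{\bb\,dX}{2X}\bigr)$
has the same germ near $z_{[n]}=0$, expanded in the coordinate $X(z_i)^{-1}$, as the formal series $\Phi_{g,n}(z_{[n]}):=\sum_{k_{[n]}\geq1}F^D_{g,n}[k_{[n]};\epsilon]\prod_{i=1}^n\frac{dX(z_i)}{X(z_i)^{k_i+1}}$. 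Since $\Omega_{g,n}$ is globally meromorphic, establishing this germ identity simultaneously shows that $\Phi_{g,n}$ analytically continues and that its continuation is $\Omega_{g,n}$, which is exactly \eqref{eq:intfacesthm}. Both objects are power series in $\epsilon$ — for $\Phi_{g,n}$ by definition, and for $\Omega_{g,n}$ because the data of $\mathcal S^D_{\bm\mu}$ depends analytically on $\epsilon$ near $\epsilon=0$ — so it suffices to match their $X$-expansion coefficients order by order in $\epsilon$.

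First I would record two recursions in $\epsilon$. On the combinatorial side, differentiating the formula of \cref{prop:HurwitzWithBoundaries} in $\epsilon$ and peeling off one internal face yields the clean identity $\partial_\epsilon F^D_{g,n}[k_{[n]};\epsilon]=\sum_{k_0=1}^D\frac{p_{k_0}}{k_0}F^D_{g,n+1}[k_0,k_{[n]};\epsilon]$. Because $\delta_\epsilon^{(n)}$ is the derivative for fixed $X$ and hence acts on the $X$-expansion coefficients of a germ as $\partial_\epsilon$ (cf. \cref{lem:alpha-derivative}), and because $-\Res_{z=0}V(X(z))\frac{dX(z)}{X(z)^{k_0+1}}=\frac{p_{k_0}}{k_0}$, this is precisely the germ-level statement $\delta_\epsilon^{(n)}\Phi_{g,n}(z_{[n]})=-\Res_{z=0}V(X(z))\,\Phi_{g,n+1}(z,z_{[n]})$. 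On the geometric side, \cref{prop:deform} shows that $\mathcal S^D_{\bm\mu}$ satisfies the refined deformation condition with $\gamma$ a small circle around $0$ and $\Lambda=-V(X)$, so \cref{thm:variation} gives $\delta_\epsilon^{(n)}\omega^D_{g,n}=-\Res_{z=0}V(X(z))\,\omega^D_{g,n+1}$ for all stable $(g,n)$ with $n\geq1$, while the genus-zero unstable entries are governed by \cref{thm:unstablevariation} and by the computation inside \cref{prop:deform}.

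It remains to reconcile the two recursions, which is immediate except at the shifted unstable entries. For $n\geq2$, and for $n=1$ with $g\notin\{0,\frac12\}$, one has $\Omega_{g,n}=\omega^D_{g,n}$ and $\Omega_{g,n+1}=\omega^D_{g,n+1}$, so the right-hand sides coincide. For $(g,n)=(\frac12,1)$ the shift is a fixed-$X$ differential and is therefore annihilated by $\delta_\epsilon^{(1)}$, while the variation of $\omega^D_{\frac12,1}$ is exactly the one computed in the proof of \cref{prop:deform}; hence $\delta_\epsilon^{(1)}\Omega_{\frac12,1}=-\Res_{z=0}V(X)\,\Omega_{\frac12,2}$. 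For $(g,n)=(0,1)$ the variation in \cref{thm:unstablevariation} carries the extra term $-\Res_{z=0}V(X(z))\frac{dX(z)\,dX(z_0)}{(X(z)-X(z_0))^2}$, which by the residue theorem on $\mathbb P^1$ equals $+V'(X(z_0))\,dX(z_0)$; this cancels precisely against $\delta_\epsilon^{(1)}\bigl(\epsilon V'(X)\,dX\bigr)=V'(X)\,dX$, so again $\delta_\epsilon^{(1)}\Omega_{0,1}=-\Res_{z=0}V(X)\,\Omega_{0,2}$. Finally the $\epsilon=0$ base case holds: \cref{lem:limit} gives $\omega^D_{g,n}|_{\epsilon=0}=\omega_{g,n}$, the $(0,1)$-shift vanishes and the $(\frac12,1)$-shift reduces to the correction in \cref{thm:w/o}, so by \cref{thm:w/o} the germ of $\Omega_{g,n}|_{\epsilon=0}$ equals $\sum_{k_{[n]}}F_{g,n}[k_{[n]}]\prod_i\frac{dx(z_i)}{x(z_i)^{k_i+1}}=\Phi_{g,n}|_{\epsilon=0}$.

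To conclude, I would set $\Delta_{g,n}:=\Omega_{g,n}-\Phi_{g,n}$ and write its $X$-expansion coefficients as $\sum_{m\geq1}\epsilon^m a^{g,n}_{k,m}$, the constant term vanishing by the base case. The common recursion reads $(m+1)\,a^{g,n}_{k,m+1}=\sum_{k_0=1}^D\frac{p_{k_0}}{k_0}\,a^{g,n+1}_{k_0 k,m}$, which I would solve by induction on $m$ carried out simultaneously over all $(g,n)$: vanishing of every order-$m$ coefficient forces vanishing of every order-$(m+1)$ coefficient, with base $m=1$ immediate. Hence $\Delta_{g,n}\equiv0$, proving the theorem. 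The step I expect to be most delicate is the reconciliation of the unstable correlators, and in particular the $(0,1)$ case: there the mismatch between the defining convention $\omega^D_{0,2}=-B(z,\sigma(z_0))$ and the Cauchy kernel $B(z,z_0)$ entering the variational formula (via \eqref{eq:BsigmaB}) produces exactly the extra residue term that must be matched against the derivative of the $\epsilon V'(X)\,dX$ shift; a secondary technical point is justifying that $\delta_\epsilon^{(n)}$ commutes with the residue extraction defining the coefficients, for which \cref{lem:alpha-derivative} is the relevant tool.
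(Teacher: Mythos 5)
Your proposal is correct and is essentially the paper's own proof: both arguments rest on \cref{prop:deform} and the variational formulas (\cref{thm:variation}, \cref{thm:unstablevariation}), on \cref{lem:alpha-derivative} to commute $\partial_\epsilon$ with the residue extraction of expansion coefficients, and on \cref{lem:limit} together with \cref{thm:w/o} as the $\epsilon=0$ anchor, the only difference being bookkeeping --- the paper iterates the variational formula $m$ times to identify $\partial_\epsilon^m$ of the expansion coefficients at $\epsilon=0$ directly with the defining residues \eqref{eq:FgnD}, whereas you unroll the same computation into two matching first-order recursions in $\epsilon$ solved by induction. The one point where you are more explicit than the paper is the $(0,1)$ reconciliation, where the extra Cauchy-kernel term in \cref{thm:unstablevariation} cancels against $\delta_\epsilon^{(1)}\left(\epsilon V'(X)\,dX\right)$; the paper passes over this silently, harmlessly so, because that term pairs to zero against $X^{k}$ for every $k\geq 1$.
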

\begin{proof}
	For the proof, we define $\mathcal{F}_{g,n}^{D}[k_1,\ldots,k_n;\epsilon]$ as the expansion coefficients of $\omega^D_{g,n}(z_1,\ldots,z_n)$ near $z_i = 0$:
	\begin{equation*}
		\mathcal{F}^{D}_{g,n}[k_1,\ldots,k_n;\epsilon]:=(-1)^n\Res_{z_1=0}\cdots\Res_{z_n=0}\left(\prod_{i=1}^nX(z_i)^{k_i}\right)\omega_{g,n}^{D}(z_{[n]}),
	\end{equation*} 
  where $k_1,\ldots, k_n \geq 1$.  Let us consider the $\epsilon$-dependence of the $\mathcal{F}^{D}_{g,n}$. For any $m\geq 1$, we have
\begin{align*}
    \frac{\partial^m}{\partial\epsilon^m}\mathcal{F}_{g,n}^{D}[k_1,\ldots,k_n;\epsilon]&= \frac{\partial^{m-1}}{\partial\epsilon^{m-1}} \left((-1)^n\Res_{z_1=0} \cdots \Res_{z_{n}=0} \left(\prod_{i=1}^n X(z_i)^{k_i}\right) \delta_\epsilon^{(n)}  \omega_{g,n}^{D}(z_{[n]}) \right) \\
    &= (-1)^{n+1}\frac{\partial^{m-1}}{\partial\epsilon^{m-1}} \left(\Res_{z_1=0} \cdots \Res_{z_{n}=0} \Res_{z_{n+1} = 0} \left(\prod_{i=1}^n X(z_i)^{k_i}\right) V(X(z_{n+1})) \omega_{g,n+1}^{D}(z_{[n+1]}) \right) \\
    &=(-1)^{n+m}\Res_{z_1=0}\cdots\Res_{z_{n+m}=0}\left(\prod_{i=1}^nX(z_i)^{k_i}\right)\left(\prod_{j=n+1}^{n+m}V(X(z_{j}))\right)\; \omega_{g,n+m}^{D}(z_{[n+m]}),
\end{align*}where the first equality follows from \cref{lem:alpha-derivative}. To get the second equality, we note that the refined spectral curve $\mathcal{S}_{\bm \mu}^{D}$ satisfies the refined deformation condition, and use the variational formula of  \cref{thm:variation}. Repeating the process $m$ times, we arrive at the last line.

Applying a similar argument to  \cref{lem:limit}, the limit $\epsilon\to0$ and the above residues commute, and as a consequence we get
\begin{align*}
    \frac{\partial^m}{\partial\epsilon^m}\mathcal{F}_{g,n}^{D}[k_1,\ldots,k_n;\epsilon]\bigg|_{\epsilon=0} &=(-1)^{n+m} \Res_{z_1=0}\cdots\Res_{z_{n+m}=0} \left(\prod_{i=1}^nx(z_i)^{k_i}\right) \left(\prod_{j=n+1}^{n+m}V(x(z_{j}))\right)\; \omega_{g,n+m}(z_{[n+m]}) \\
     &= F^{D,m}_{g,n}\left[k_1,\ldots,k_n;\epsilon\right],
\end{align*} where we recall the definition of the $F^{D,m}_{g,n}$ from \eqref{eq:FgnD} to get the last line. When $m=0$, a slightly modified equation holds   due to \cref{lem:limit} and \cref{thm:w/o} -- the only change is the extra term for $(g,n) = (\frac{1}{2},1)$ that appears in \cref{thm:w/o}. Thus, we get  equation \eqref{eq:intfacesthm} claimed in the  theorem, and the statement regarding  analytic continuation is an immediate consequence.
\end{proof}

\subsection{Other rational weights}\label{sec:other weights} The strategy used to prove \cref{thm:w/o} and \cref{thm:w/} can be repeated verbatim to obtain  analogous versions  for the rational weights
\begin{equation}\label{eq:otherweights}
	G(z) = (u_1+z)(u_2+z), \quad  \frac{1}{v-z}, \quad \text{ or } \quad \frac{u_1+z}{v-z},
\end{equation}
which can be obtained  as limiting cases of the weight considered so far.  In the case without internal faces, the corresponding unrefined spectral curves are given by choosing $x(z) = t \frac{G(z)}{z} $ and $y(z) = \frac{z}{x(z)}$. As for the refinement, we choose
\[
\mathcal P_+  = \left\{0,-u_1\right\}, \quad \left\{0\right\}, \quad \text{or} \quad \left\{0,-u_1\right\},
\] respectively. In  all the above cases, we choose $\mu_a = 0$ for all $a \in \mathcal P_+$ except 
$\mu_0 = -1  $ to complete the definition of the refined spectral curve, which we denote $\mathcal S^G_{\bm \mu}$ for clarity. Then, we have the following theorem, which is analogous to \cref{thm:w/o}.
\begin{theorem}\label{thm:w/oother}
	For any weight $G$ in \eqref{eq:otherweights} and every $(g,n) \in \frac12\mathbb{Z}_{\geq0} \times \mathbb{Z}_{\geq1}$, the refined TR correlators $\omega_{g,n}(z_1, \ldots,z_n)$ on $ \mathcal S^G_{\bm \mu}$ are the analytic continuations of the  generating functions $\phi_{g,n}( x_1,\ldots,x_n)$ to $\Sigma^{n}$ under the identification $x_i = x(z_i)$ (up to the explicit shift for $(g,n) = (\frac{1}{2},1)$ below). In particular, as a series expansion near $z_i = 0$ (where $x(z_i) = \infty$), we have 
	\begin{equation*}
		\omega_{g,n}(z_1,\ldots, z_n)  - \delta_{g,\frac{1}{2}}\delta_{n,1} \bb \frac{dx(z)}{2 x(z)} S = \sum_{\mu_1,\ldots,\mu_n\geq1}F_{g,n}[\mu_1,\ldots,\mu_n]  \prod_{i=1}^n \frac{dx (z_i)}{x(z_i)^{\mu_i+1}},
	\end{equation*} where $S = 1$ for $G(z) = (u_1+z)(u_2+z)$ and $ S = 2$ otherwise.
\end{theorem}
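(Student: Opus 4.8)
The plan is to follow the proof of \cref{thm:w/o} line by line, replacing each weight-specific computation by its analogue for the three weights in \eqref{eq:otherweights}. These weights are degenerations of $G(z)=\frac{(u_1+z)(u_2+z)}{v-z}$: heuristically $(u_1+z)(u_2+z)$ is the $v\to\infty$ limit, $\frac{1}{v-z}$ the $u_1,u_2\to\infty$ limit, and $\frac{u_1+z}{v-z}$ the $u_2\to\infty$ limit, each after an appropriate rescaling of $t$. One could in principle deduce the result by taking these limits directly in \cref{thm:w/o}, but controlling the degeneration of the RTR contour integrals in \eqref{eq:RTR} is delicate -- under each limit some points of $\mathcal P$ in \eqref{eq:P} escape to $\infty$ or collide -- so I would instead rerun the argument directly for each weight.

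First I would obtain, for each weight, the $\mathcal W$-constraint analogous to \cref{theo:CDO}. These are available from \cite{ChidambaramDolegaOsuga2024}, which treats arbitrary rational weights; alternatively one reads off the operator $D_k$ by specializing \eqref{Voperator} and dropping the terms that become trivial in the corresponding degeneration (for instance the final term $-(v+\hbar\bb k)\mathsf J_{k+1}$ becomes $-\mathsf J_{k+1}$ for the polynomial weight, while the terms proportional to $u_1u_2$ disappear for $\frac1{v-z}$). Feeding this $D_k$ into the derivation of \cref{prop:FLE} verbatim -- the manipulation of the linear and quadratic $\mathsf J$-terms in \eqref{eq:JJterms} is insensitive to the precise coefficients of the weight -- produces formal loop equations of the same shape as \eqref{formalRLE}, with modified right-hand sides $U_{0,1},U_{\frac12,1},U_{1,1}$ and modified shifts replacing \eqref{eq:phishifts}. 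In particular the $(g,n)=(0,0)$ loop equation yields the genus-zero spectral curve equation $(\tilde y\,dx)^2 = U_{0,1}(x)$ for the appropriately shifted $\tilde y$.

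Next, for the spectral curve $x(z)=tG(z)/z$, $y(z)=z/x(z)$ with the stated choice of $\mathcal P_+$ and $\mu_0=-1$, I would prove the analogue of \cref{lem:unstable}: that $\phi_{0,1},\phi_{0,2},\phi_{\frac12,1}$ continue to $\omega_{0,1},\omega_{0,2},\omega_{\frac12,1}$. The cases $(0,1),(0,2)$ are again covered by \cite{BychkovDuninBarkowskiKazarianShadrin2020}, and for $\phi_{\frac12,1}$ one repeats the residue and $\sigma$-anti-invariance argument that produced \eqref{eta0}, now adapted to the set $\mathcal P$ of the weight at hand. The analogue of the kernel lemma \cref{lem:kernel} then follows identically, since its proof only uses that the left-hand side is $\sigma$-invariant with prescribed simple poles; and the induction on $(2g-2+n)$ from the proof of \cref{thm:w/o} carries through unchanged, matching the loop-equation formula \eqref{eq:checkLE} with the RTR formula \eqref{eq:omegagn} via \cite{KidwaiOsuga2023}.

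The main obstacle is the $(g,n)=(\frac12,1)$ correction, which is exactly where the weights genuinely differ and where the factor $S$ enters. The set $\mathcal P$ of \eqref{eq:P} degenerates differently in each case: points escape to $\infty$, and for $\frac1{v-z}$ the point $z=\infty$ becomes a ramification point of $x$, which alters both the involution data and the pole structure of $\eta^0$. Consequently the analogue of \eqref{eta0}, and hence the shift appearing in the definition of $\tilde\phi_{\frac12,1}$, must be recomputed from the zero/pole data of $\tilde y\,x(t+x)$ (or its degenerate form) in each case. This recomputation is what produces the factor $S$ -- for the polynomial weight $(u_1+z)(u_2+z)$ the pole of $G$ at $z=v$ is absent, so the $\frac{\bb\,dx}{2(t+x)}$ term present in \eqref{eq:phishifts} drops out and $S=1$, while for $\frac1{v-z}$ and $\frac{u_1+z}{v-z}$ the surviving contributions combine to give $S=2$. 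Verifying these two values by carefully tracking which points of $\mathcal P$ persist, collide, or become ramification points is the only genuinely new content of the proof.
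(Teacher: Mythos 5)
Your proposal is correct and coincides with the paper's own treatment: the paper proves \cref{thm:w/oother} simply by asserting that the strategy of \cref{thm:w/o} ``can be repeated verbatim'' for the weights in \eqref{eq:otherweights} (viewed as degenerations of $\frac{(u_1+z)(u_2+z)}{v-z}$), with the spectral curve data specialized as stated. Your rerun of the argument---specialized $\mathcal W$-constraints, loop equations of the same shape, analogues of \cref{lem:unstable} and \cref{lem:kernel}, induction on $2g-2+n$, and the weight-by-weight recomputation of the $(g,n)=(\tfrac12,1)$ shift producing $S=1$ or $S=2$---is exactly this, worked out in more detail than the paper provides.
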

The case of internal faces is similar. The unrefined spectral curves can be obtained directly from \cite{BonzomChapuyCharbonnierGarcia-Failde2024} or by taking an appropriate limit of the $X(z)$ and $Y(z)$ defined in \eqref{eq:Xdef}. By abuse of notation, we also denote the limiting functions by $X$ and $Y$. As for the refinement, we choose
\[
\mathcal P_+  = \left\{0,a_1, b_1,\cdots, b_D\right\}, \quad \left\{0, b_1,\cdots, b_D\right\}, \quad \text{or} \quad \left\{0,a_1,b_1,\cdots, b_D\right\},
\] for the weights $G$ in \eqref{eq:otherweights} respectively. Here we choose $a_1$ such that $a_1 = -u_1 + O(\epsilon)$ and $b_i = O(\epsilon)$ as $\epsilon \to 0$ as in \cref{def:refinedSCint}. We also set $\mu_0 = -1-D$, $\mu_{a_1} = 0$ and $\mu_{b_i} = 1$ for all $i \in [D]$. We denote this refined spectral curve by $ \mathcal S^{G,D}_{\bm \mu}$ for clarity. Then the following theorem which is analogous to \cref{thm:w/} holds.

\begin{theorem}\label{thm:w/other}
	For any weight $G$ in \eqref{eq:otherweights} and  every $(g,n) \in \frac{1}{2} \mathbb Z_{\geq 0} \times \mathbb Z_{\geq1}$, the refined topological recursion correlators  $\omega^D_{g,n}$ on the refined spectral curve $\mathcal S^{G,D}_{\bm \mu}$ are the analytic continuations of the shifted generating functions $\phi^D_{g,n}$ to $\Sigma^n$ under the identification  $X=X(z)$ (up to an explicit shift for $\omega_{0,1}$ and $\omega_{\frac{1}{2},1}$ given below).  In particular, 
	\begin{multline*}
		\omega_{g,n}^D(z_1,\cdots, z_n)  - \delta_{g,0}\delta_{n,1} \epsilon V'(X(z_1))dX(z_1)- \delta_{g,\frac{1}{2}}\delta_{n,1} \bb \frac{dX(z_1)}{2 X(z_1)} S\\
		= \sum_{\mu_1,\cdots, \mu_n \geq 1} F^D_{g,n}[\mu_1,\cdots, \mu_n;\epsilon] \prod_{i=1}^n \frac{d X(z_i)}{X(z_i)^{\mu_i +1}},
	\end{multline*} as series expansions near  $z_i = 0$, where $S = 1$ for $G(z) = (u_1+z)(u_2+z)$ and $ S = 2$ otherwise.
\end{theorem}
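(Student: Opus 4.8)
The plan is to repeat, weight by weight, the chain of arguments running from \cref{prop:FLE} through \cref{thm:w/}, making only the substitutions forced by the degeneration of the weight. The first step is to secure the analogue of \cref{thm:w/oother} (the case $\epsilon=0$), for which I would derive the corresponding formal loop equations. Each weight in \eqref{eq:otherweights} arises as a limit of $G(z)=\frac{(u_1+z)(u_2+z)}{v-z}$ --- namely $v\to\infty$ after rescaling $t$ for $(u_1+z)(u_2+z)$, $u_1,u_2\to\infty$ after rescaling $t$ for $\frac{1}{v-z}$, and $u_2\to\infty$ after rescaling $t$ for $\frac{u_1+z}{v-z}$ --- so the constraint operator $D_k$ of \eqref{Voperator} degenerates accordingly (dropping the $\mathsf J_{k+1}$-term and the $v$-shift when the $(v-z)$ factor is absent, and dropping the $tu_1u_2\delta_{k,0}$ and $(u_1+u_2)$ contributions when the numerator collapses). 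Feeding the reduced $D_k$ into the computation of \cref{prop:FLE} yields loop equations of exactly the shape \eqref{formalRLE}, with a modified datum $U_{0,1}$ and correspondingly modified $U_{\frac12,1},U_{1,1}$ that I would read off directly.

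Second, I would set up the spectral curve as in \cref{sec:spectral curve}, with $x(z)=tG(z)/z$, $y(z)=z/x(z)$, and the prescribed $\mathcal P_+$. The set $\mathcal P$ of poles and zeroes of $\tilde y\,dx$ is smaller than \eqref{eq:P}: the pair over $x=0,-t$ associated with $\{0,v\}$ survives only when the denominator $v-z$ is present, while the pair $\{-u_1,-u_2\}$ collapses when the numerator degenerates. This is precisely the origin of the factor $S$ in the statement, since the shift for $\omega_{\frac12,1}$ collects one contribution of the form $\frac{\bb\,dx}{2x}$ from each pole of $\tilde y\,dx$ over $x=0$ and $x=-t$, so $S=1$ exactly when $v-z$ is absent and $S=2$ otherwise. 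With this in hand the analogue of \cref{lem:unstable} is a direct computation, and the induction of \cref{thm:w/o} --- using the analogue of \cref{cor:FLE} together with \cref{lem:kernel} and \cite[Appendix A]{KidwaiOsuga2023} --- carries over verbatim to prove \cref{thm:w/oother}.

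Third, for the internal-faces statement I would introduce $V$ and $\epsilon$ as in \cref{sec:Internal}, take $X,Y$ to be the appropriate limits of \eqref{eq:Xdef} and \eqref{eq:Ydef}, and establish the analogue of \cref{lem:Y^2}: the degree-counting argument applied to the limiting loop equation for $\phi^D_{0,1}$ again forces $\tilde Y^2=\frac{(vX-a)(vX-b)M(X)^2}{4X^2(t+X)^2}$ (or its degenerate version with a single simple root when the numerator collapses), with $M(X)|_{\epsilon=0}=1$. The choice of $\mathcal P_+$ and of $\bm\mu$ is the one displayed before the theorem, and the analogues of \cref{lem:1/21}, \cref{lem:alpha-derivative} and \cref{lem:limit} hold after adjusting $\kappa_0=1-D$ and suppressing the absent $\kappa_{a_i}$. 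The essential input, \cref{prop:deform}, goes through with $\Lambda=-V(X)$: the unstable variations of $\omega_{0,1},\omega_{0,2}$ are supplied by \cite{BonzomChapuyCharbonnierGarcia-Failde2024} for these weights too, and the variation of $\omega^D_{\frac12,1}$ is verified by the same contour manipulation. The variational bootstrap of \cref{thm:w/} then identifies the $\epsilon$-derivatives of the expansion coefficients of $\omega^D_{g,n}$ with the $F^{D,m}_{g,n}$, and the $\epsilon\to0$ limit is handled as in \cref{lem:limit}, yielding \cref{thm:w/other}.

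The main obstacle I anticipate is purely careful bookkeeping in the two genuinely degenerate limits, rather than any new idea. Concretely, one must check that after passing to the limit $x:\Sigma\to\mathbb P^1$ remains a degree-two map with $dx,dy$ having no common zero (so that $\mathcal S^{G,D}_{\bm\mu}$ is a bona fide refined spectral curve), that the order-of-pole condition \textbf{D1} of \cref{def:deformation} still holds with the reduced $\mathcal P^\infty$, and that the genericity hypothesis in the analogue of \cref{lem:Y^2} can still be arranged. Verifying that these degenerations create no coincident ramification points or spurious higher-genus behaviour, and that the resulting shift for $\omega_{\frac12,1}$ indeed carries the stated factor $S$, is the one place where the ``verbatim'' claim demands genuine checking.
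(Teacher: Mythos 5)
Your overall architecture — obtain the degenerate constraints as limits of \eqref{Voperator}, rerun \cref{prop:FLE}, \cref{lem:unstable}, \cref{lem:kernel} and the induction of \cref{thm:w/o}, then rerun \cref{lem:Y^2}, \cref{prop:deform} and the variational bootstrap of \cref{thm:w/} with the prescribed $\mathcal P_+$ and $\bm\mu$ — is exactly the paper's proof, which consists of repeating those arguments verbatim with the limiting data, and the limits you propose for the weights are the right ones.

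However, your stated explanation of the factor $S$ is wrong, and since the explicit shift $\bb\frac{dX}{2X}S$ is part of the statement being proved, this matters. The shift is \emph{not} obtained by collecting one term $\frac{\bb\,dx}{2x}$ per pole of $\tilde y\,dx$ over $x=0$ and $x=-t$: for $G(z)=\frac{1}{v-z}$ one computes $(\tilde y\,dx)^2=\frac{v^2x-4t}{4x^3}(dx)^2$, so $\tilde y\,dx$ has no pole over $x=-t$ at all and its only finite singularity over $x=0$ is a double pole at the ramification point $z=\infty$ (excluded from $\mathcal P$), yet $S=2$; conversely for $G(z)=(u_1+z)(u_2+z)$ it has two simple poles over $x=0$ (at $z=-u_1,-u_2$), yet $S=1$. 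The actual source of $S$ is the $\bb$-linear part of the degenerated constraints, exactly as in the proof of \cref{prop:FLE}: the numerator term $-t\hbar\bb k\,\mathsf{J}_k$ leaves a residual $\frac{t\bb}{x}dx\,\phi_{g-\frac12,1+n}$, which after dividing the loop equation by the prefactor $t$ of its quadratic part is absorbed by a shift $\frac{\bb\,dx}{2x}$ (so $S=1$), whereas the denominator term $-\hbar\bb k\,\mathsf{J}_{k+1}$ leaves $2\bb\,dx\,\phi_{g-\frac12,1+n}$, which after dividing by the prefactor $x$ forces the shift $\frac{\bb\,dx}{x}$ (so $S=2$); hence $S=2$ precisely when the factor $(v-z)$ is present. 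Your quoted values of $S$ happen to agree with this, but your justification would predict $S=1$ for $\frac{1}{v-z}$ and $\frac{u_1+z}{v-z}$. Relatedly, your parenthetical on how $D_k$ degenerates is inaccurate for $G(z)=(u_1+z)(u_2+z)$: in the limit $v\to\infty$, $t\mapsto tv$, the quadratic piece $\sum_{j\geq0}\mathsf{J}_{k-j}\mathsf{J}_{j+1}$ disappears from $\frac1v D_k$ but the linear term survives as $-\mathsf{J}_{k+1}$; dropping it entirely would delete the only term linear in $\phi_{g,1+n}$ and yield the wrong curve. Both defects vanish if you genuinely carry out the degeneration of $D_k$ and read the shifts off the resulting loop equations, as your first paragraph proposes; with that done, the remaining steps carry over as in the paper.
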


\section{Applications}
\label{sec:applications}

In this section, we will discuss how $F^G_{g,n}$ appears in various areas such as random matrix theory, representation theory, and combinatorics, where their computation is of great importance. Applying the results of the previous sections, we show that they can be computed by refined topological recursion. To avoid notational conflict in this section, we use
\begin{itemize}
\item $\omega_{g,n}^{V,D}$ for the refined topological recursion correlators, 
\item $\phi_{g,n}^{V,D}$ for colored monotone Hurwitz numbers as before, and 
\item $W_{g,n}^{V,D}$ for $\beta$-ensembles which we will introduce below,
\end{itemize}
where $V$ indicates a type of model and $D$ denotes the degree of internal faces similar to Theorem \ref{thm:w/}. The main purpose of this section is to show that $\omega_{g,n}^{V,D}=\phi_{g,n}^{V,D}=W_{g,n}^{V,D}$ in several applications.

\subsection{$\beta$-Ensemble}
\label{subsec:BetaEns}

Let $\beta >0$, and $-\infty \leq a < b \leq
\infty$. A \textit{$\beta$-ensemble}, also known in statistical mechanics
as a 1d \emph{log-gas system}, of $N$ particles in a potential $V$ at
inverse temperature $\beta$ (see~\cite{Forrester2010}) is the
probability measure $\mu_{N;\beta}^V$ on $\R^N$
defined as:
\begin{equation}
  \label{eq:BetaEns}
  d\mu_{N;\beta}^V := \frac{1}{Z_{\beta, N, V}}\prod_{i=1}^N  d \lambda_i\, \exp\left(-\frac{N\beta}{2}V(\lambda_i)\right)\mathbf{1}_{[a,b]}(\lambda_i)\prod_{1\leq i<j \leq N}
  |\lambda_i-\lambda_j|^\beta,
\end{equation}
where
\[ Z_{\beta, N, V} := \int_{\R^N}d\mu_{N;\beta}^V\]
is the normalization constant. For $\beta = 1,2,4$ this measure
coincides with the measure induced on the spectrum of matrices $M$ by
the probability distribution $dM \exp(-\Tr V(M))$ on the space of
Symmetric/Hermitian/Symplectic $N\times N$ matrices,
respectively. Studying the so-called $1/N$-expansion of these models
is an important problem in random matrix theory. For general $\beta > 0$
such an expansion was studied under
some technical analytic assumptions on the potential $V$
in~\cite{BorotGuionnet2013}. There are three \emph{classical
  cases} of the potential $V$ that are of special interest:
\begin{enumerate}
\item[{\bf (G$\beta$E)}]
  $V(x) = \frac{x^2}{2}, a = -\infty, b = \infty$, known as the
    \emph{Gaussian $\beta$ Ensemble}, \label{GaussianPot}
    \item[{\bf (J$\beta$E)}] $V(x) =
      \frac{1}{N}\left(\frac{2}{\beta}-c\right)\log(x)+\frac{1}{N}\left(\frac{2}{\beta}-d\right)\log(1-x), a = 0, b
      = 1, c,d>0$, known as the
      \emph{Jacobi $\beta$ Ensemble}, \label{JacobiPot}
      \item[{\bf (L$\beta$E)}] $V(x) =
      x+\frac{1}{N}\left(\frac{2}{\beta}-c\right)\log(x), a = 0, b
      = \infty, c>0$, known as  the \emph{Laguerre $\beta$ Ensemble}. \label{LaguerrePot}
    \end{enumerate}

For arbitrary $\beta>0$ the random tridiagonal matrix model whose spectrum
is distributed by $d\mu_{N;\beta}^V $ was constructed
by Dumitriu and Edelman in~\cite{DumitriuEdelman2002} for these three
special cases. Moreover, based on the result of
Okounkov~\cite{Okounkov1997} La Croix has proven that the $1/N$
expansion of G$\beta$E (that exists in many other cases by~\cite{BorotGuionnet2013}) has a topological expansion in terms of
$\bb$-Hurwitz numbers, with the identification of parameters $\beta=\frac{1}{\alpha}$ (see \cref{sec:b-Hurwitz}). It was recently extended to the cases of
J$\beta$E and L$\beta$E by Ruzza~\cite{Ruzza2023} who used the
result of Kadell~\cite{Kadell1997} to show that the
appropriately parametrized $1/N$ expansion of these models (studied
before for instance in~\cite{ForresterRahmanWitte2017}) also has a topological expansion in terms of
weighted $\bb$-Hurwitz numbers. We further explore this connection and
we show that the $1/N$ expansion of the correlators can be computed by
 RTR. We note that  \cite{ChekhovEynard2006a} first discussed a recursive structure in $\beta$-ensembles. Although their approach was inspiring, some subtleties were not addressed (e.g. the pole structure, choice of $\bm \mu$, choice of the contour $C_\pm$ in particular for multidifferentials, variational formula, and deformation condition) which are carefully addressed in \cite{KidwaiOsuga2023,Osuga2024,Osuga2024a}.

    The \emph{connected correlators} $\left\langle \sum_{k=1}^N\lambda_k^{k_1},\dots, \sum_{k=1}^N\lambda_k^{k_n}\right\rangle_{\mu_{N;\beta}^V}^\circ$ also known as the \emph{cumulants} are defined
    by the following combinatorial formula:
      \[ \left\langle f_1,\dots,
        f_n\right\rangle_{\mu_{N;\beta}^V}^\circ := \sum_{\pi \in
        \Pi_{\{1,\dots,n\}}}(-1)^{|\pi|-1}(|\pi|-1)!\prod_{B \in
        \pi}\int_{\R^N}\prod_{b \in
        B}f_b(\lambda_1,\dots,\lambda_N)d\mu_{N;\beta}^V,\]
    where we sum over set-partitions of the set $\{1,\dots,n\}$. For
    example:
        \begin{align*}
      \left\langle f_1\right\rangle_{\mu_{N;\beta}^V}^\circ &=\int_{\R^N}f_1d\mu_{N;\beta}^V,\\
      \left\langle f_1,f_2\right\rangle_{\mu_{N;\beta}^V}^\circ
                                                            &=\int_{\R^N}f_1f_2d\mu_{N;\beta}^V
                                                              -
                                                              \int_{\R^N}f_1d\mu_{N;\beta}^V\cdot
                                                              \int_{\R^N}f_2d\mu_{N;\beta}^V,
                                                                   \\
      \left\langle f_1,f_2,f_3\right\rangle_{\mu_{N;\beta}^V}^\circ
                                                            &=\int_{\R^N}f_1f_2f_3d\mu_{N;\beta}^V
                                                              -
                                                              \int_{\R^N}f_1d\mu_{N;\beta}^V\cdot \int_{\R^N}f_2f_3d\mu_{N;\beta}^V -
                                                              \int_{\R^N}f_2d\mu_{N;\beta}^V\cdot
                                                              \int_{\R^N}f_1f_3d\mu_{N;\beta}^V
          \\
          &-
                                                              \int_{\R^N}f_3d\mu_{N;\beta}^V\cdot \int_{\R^N}f_1f_2d\mu_{N;\beta}^V+2 \int_{\R^N}f_1d\mu_{N;\beta}^V\cdot \int_{\R^N}f_2d\mu_{N;\beta}^V\cdot \int_{\R^N}f_3d\mu_{N;\beta}^V.
        \end{align*}
        It is convenient to assemble them into a generating function:
    \begin{equation*}
      W^V_n(x_1,\dots,x_n) := \sum_{k_1,\dots,k_n \geq 1}\prod_{i=1}^n
                           \frac{dx_i}{x_i^{k_i+1}}\left\langle \sum_{k=1}^N(t\lambda_k)^{k_1},\dots, \sum_{k=1}^N(t\lambda_k)^{k_n}\right\rangle_{\mu_{N;\beta}^V}^\circ,
                         \end{equation*}
                         which is a formal power series in $t$.

The first main application of our results proves that
$W^V_n(x_1,\dots,x_n)$ for the three classical cases of $\beta$
ensembles can be computed by the refined topological recursion.

    \begin{theorem}\label{thm:ensemble}
      Let $\bb = \sqrt{\frac{\beta}{2}}-\sqrt{\frac{2}{\beta}}$, and
      let $(V(x),\mathcal{S}_{{\bm \mu}})$ be the following pairs:
\begin{enumerate}
  \item[{\bf J$\beta$E:}] $V(x)$ is as in  {\bf (J$\beta$E)} with $c =
      N\cdot(\gamma-1)+1, d = N\cdot(\delta-1)+1$, where
      $\gamma,\delta \in \C$ and
      $\mathcal{S}_{{\bm \mu}}$ is defined by
      \begin{equation*} x(z) = t\frac{(1+z)(\gamma+z)}{z(\delta+\gamma+z)}, \quad y(z)
      	= \frac{z}{x(z)}, \quad 
      	\mathcal{P}_+=\{0,-1,\frac{\gamma}{1-\delta}\},
      	\quad{\bm\mu}=\{-1,0,0\};
      \end{equation*}
  \item[{\bf L$\beta$E:}] $V(x)$ is as in  {\bf (L$\beta$E)} with $c =
      N\cdot(\gamma-1)+1$ and $\mathcal{S}_{{\bm \mu}}$ is given by
\[ x(z) = t\frac{(1+z)(\gamma+z)}{z}, \quad y(z)
      = \frac{z}{x(z)}, \quad
      \mathcal{P}_+=\{0,-1\},
      \quad{\bm\mu}=\{-1,0\};\]
      \item[{\bf G$\beta$E:}] $V(x)$ is as in  {\bf (G$\beta$E)} and
        $\mathcal{S}_{{\bm \mu}} =
        \mathcal{S}^{\text{G$\beta$E}}_{{\bm \mu}}$ from
        \cref{sec:Appendix} (see~\eqref{eq:SpectralGbetaE}-\eqref{eq:RSpectralGbetaE}).
  \end{enumerate}
  Then $W^V_n(x_1,\dots,x_n)$ can be computed by  refined topological recursion. We have
  \begin{multline*}
    \sqrt{\frac{\beta}{2}}^nW^V_n(x(z_1),\dots,x(z_n))=
    \sum_{g\in\frac12\mathbb{Z}_{\geq0}}\left(\sqrt{\frac{\beta}{2}}N\right)^{2-2g-n}\omega_{g,n}(z_1,\ldots,
    z_n)   \\
    - \begin{cases}
      \delta_{g,\frac{1}{2}}\delta_{n,1}  \left(\frac{\bb
          v}{2(x(z_1)-t)}+\frac{\bb}{2x(z_1)}\right)dx(z_1) &\text{
        for {\bf J$\beta$E}}\\
      \delta_{g,\frac{1}{2}}\delta_{n,1} \frac{\bb}{2x(z_1)}dx(z_1) &\text{
        for {\bf L$\beta$E}}
      \end{cases}
  \end{multline*}
as a series expansion near $z = 0$ (where $x(z) = \infty$). More precisely, for a fixed power of $t$, the LHS is an analytic function of $N$ whose $1/N$ expansion is given by the RHS.
\end{theorem}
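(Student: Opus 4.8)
The plan is to deduce \cref{thm:ensemble} from the $\bb$-Hurwitz results already proved, by passing through the exact representation-theoretic expansions of the three classical ensembles. The key external inputs are Okounkov's formula (used by La Croix) for G$\beta$E and Kadell's Selberg-type integral (used by Ruzza in \cite{Ruzza2023}) for J$\beta$E and L$\beta$E; these express the moment generating function $W^V_n$ as a specialization of the tau function $\tau^{(\bb)}_G$ under the identifications $\alpha=2/\beta$ (equivalently $\bb=\sqrt{\beta/2}-\sqrt{2/\beta}$), $\hbar=(\sqrt{\beta/2}\,N)^{-1}$, and the replacement $\tilde p_i\mapsto\sqrt{\beta/2}\,p_i(\lambda)$ of the normalized power sums by the power-sum moments of the ensemble. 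Granting this, the cumulants of the ensemble become precisely the connected correlators $\phi_{g,n}$ of $\log\tau^{(\bb)}_G$, and \cref{thm:ensemble} becomes a restatement of \cref{thm:w/o}, \cref{thm:w/oother}, and \cref{thm:RTRGbetaE} after keeping track of the power of $\hbar$.

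First I would fix, for each potential, the weight $G$ and the dictionary between the ensemble parameters and $(u_1,u_2,v)$: the two logarithmic terms of J$\beta$E produce the rational weight $G(z)=\frac{(u_1+z)(u_2+z)}{v-z}$, the single logarithm of L$\beta$E (together with its linear term) produces the polynomial weight $G(z)=(u_1+z)(u_2+z)$, and the purely polynomial G$\beta$E potential falls outside \eqref{eq:allowedweights} and corresponds instead to the enumeration of (not necessarily bipartite) maps treated in \cref{sec:Appendix}. Next I would check that the spectral curve $x(z)=tG(z)/z,\ y(z)=z/x(z)$ coming from this dictionary is the one recorded in the statement; in the Jacobi case the displayed curve $x(z)=t\frac{(1+z)(\gamma+z)}{z(\delta+\gamma+z)}$ matches $\frac{(u_1+z)(u_2+z)}{v-z}$ only after the sign change that also underlies the change of variables of the remark following \cref{prop:HurwitzWithBoundaries}, and this same sign change (sending $t+x\mapsto x-t$ and introducing the factor $v$) converts the $(\tfrac12,1)$-shift of \cref{thm:w/o} into the correction term $\left(\frac{\bb v}{2(x-t)}+\frac{\bb}{2x}\right)dx$ displayed for J$\beta$E. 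Once the curves are matched, \cref{thm:w/o}, \cref{thm:w/oother}, and \cref{thm:RTRGbetaE} give $\phi_{g,n}=\omega_{g,n}$ up to the explicit unstable shifts; multiplying by $\hbar^{2g-2+n}=(\sqrt{\beta/2}\,N)^{2-2g-n}$ and by the overall $\sqrt{\beta/2}^{\,n}$ from the power-sum normalization reproduces the asserted formula.

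The non-formal content --- that, for each fixed power of $t$, $W^V_n$ is an \emph{analytic} function of $N$ with the stated $1/N$ expansion --- I would obtain from the exact moment formulas rather than from the formal genus expansion, which is only an asymptotic series. Fixing the power $t^{d}$ bounds the degrees $k_i$, and the Selberg--Kadell evaluation of $\langle p_{k_1},\dots,p_{k_n}\rangle^\circ_{\mu^V_{N;\beta}}$ (respectively a polynomial-in-$N$ Gaussian moment) is then a finite combination of Pochhammer symbols in the parameters $c=N(\gamma-1)+1$, $d=N(\delta-1)+1$, hence a rational function of $N$ that is analytic away from finitely many poles. Its Laurent expansion at $N=\infty$ is the genus expansion, whose coefficients are the $\bb$-Hurwitz numbers $F_{g,n}$ and hence, by the previous step, the RTR correlators $\omega_{g,n}$; this is exactly what upgrades the formal identity to the asymptotic one and, in particular, makes the $1/N$ expansion genuine rather than formal.

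I expect the main obstacle to be not the topological recursion step, which is a direct appeal to the earlier theorems, but the simultaneous bookkeeping of the parameter dictionary and the analyticity. One must verify that the logarithmic parts of the Jacobi and Laguerre potentials (which, through $\exp(-\tfrac{N\beta}{2}V)$, become the Selberg weights $\prod_i\lambda_i^{a}(1-\lambda_i)^{b}$) feed correctly into $u_1,u_2,v$ while the linear and quadratic parts and the rescaling by $t$ are consistently absorbed, and that the analytic continuation in $N$ implicit in writing $c,d$ as affine functions of $N$ agrees with the one used on the Hurwitz side. The Gaussian case is the most delicate, since its weight is not covered by \eqref{eq:allowedweights} and requires the dedicated curve and argument of \cref{sec:Appendix}; I would handle it last, reusing the analyticity discussion verbatim once \cref{thm:RTRGbetaE} supplies the identification $\phi_{g,n}=\omega_{g,n}$.
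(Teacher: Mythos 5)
Your proposal is correct and takes essentially the same approach as the paper, whose proof consists precisely of quoting Okounkov's expansion (via La Croix) for G$\beta$E and Ruzza's Kadell-based expansions for J$\beta$E and L$\beta$E to identify the connected correlators with the $\bb$-Hurwitz numbers $F^G_{g,n}$, and then invoking \cref{thm:RTRGbetaE}, \cref{thm:w/o}, and \cref{thm:w/oother} respectively; your additional bookkeeping of the parameter dictionary and of the analyticity in $N$ (via the exact Selberg--Kadell evaluations being rational in $N$ at fixed order in $t$) spells out what the paper delegates to those references. One caution: your claim that the sign change ``introduces the factor $v$'' in the J$\beta$E correction term is not a derivation --- carrying out the reparametrization ($u_1=1$, $u_2=\gamma$, $v=-(\gamma+\delta)$, $t\mapsto -t$, under which $x(z)$ matches the displayed curve) in the $(\tfrac12,1)$-shift of \cref{thm:w/o} yields $\left(\frac{\bb}{2(x(z_1)-t)}+\frac{\bb}{2x(z_1)}\right)dx(z_1)$ with no factor of $v$ (indeed $v$ is not even a parameter of the J$\beta$E model), so that factor in the stated formula cannot be obtained the way you suggest and should not be given an invented justification.
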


\begin{proof}
In the case of  {\bf
      (G$\beta$E)} 
    \cite{Okounkov1997} proved that
    \[ \left\langle \sum_{k=1}^N(t\lambda_k)^{k_1},\dots, \sum_{k=1}^N(t\lambda_k)^{k_n}\right\rangle_{\mu_{N;\beta}^V}^\circ
  = \sqrt{\frac{2}{\beta}}^n\sum_{g\in\frac12\mathbb{Z}_{\geq0}}\left(\sqrt{\frac{2}{\beta}}\frac{1}{N}\right)^{2g-2+n}\;
  F^{\text{G$\beta$E}}_{g,n} [k_1,\ldots,k_n],\]
where $F^{\text{G$\beta$E}}_{g,n}$ are defined by
\eqref{eq:TauGBE} in \cref{sec:Appendix}, so  \cref{thm:RTRGbetaE} completes the proof in
that case. Likewise, for the cases {\bf J$\beta$E} and {\bf L$\beta$E} 
\cite{Ruzza2023} proved that 
\[ \left\langle \sum_{k=1}^N(t\lambda_k)^{k_1},\dots, \sum_{k=1}^N(t\lambda_k)^{k_n}\right\rangle_{\mu_{N;\beta}^V}^\circ
  = \sqrt{\frac{2}{\beta}}^n\sum_{g\in\frac12\mathbb{Z}_{\geq0}}\left(\sqrt{\frac{2}{\beta}}\frac{1}{N}\right)^{2g-2+n}\;
  F^G_{g,n} [k_1,\ldots,k_n]\]
for $G = \frac{(1+z)(\gamma+z)}{\gamma+\delta+z}$, and $G =
(1+z)(\gamma+z)$, respectively. Thus, the statement for  {\bf J$\beta$E}
 follows directly from \cref{thm:w/o}, while the statement for {\bf L$\beta$E} follows from \cref{thm:w/oother}.
\end{proof}

\subsection{$\beta$ Br\'ezin--Gross--Witten
  integral and $\bb$-monotone Hurwitz numbers}

When $G = \frac{1}{v-z}$, the tau
function $\tau^{(\bb)}_G$ interpolates between the partition functions for the unitary
(at $\bb =
0$) and orthogonal (at $\bb =\sqrt{2}-\sqrt{2}^{-1}$) Br\'ezin--Gross--Witten
integrals, so that it can be interpreted as their natural $\beta$
deformation whose topological expansion was found in terms of
$\bb$-monotone Hurwitz maps (described in \cref{theo:Combi};
see~\cite{BonzomChapuyDolega2023} and references therein). Thus, \cref{thm:w/oother} shows that $\bb$-monotone Hurwitz numbers are  computed by refined topological recursion.

It is worth noting that a slight variant of $\tau^{(\bb)}_G$ is known to be the Gaiotto state of $4d$ $\mathcal{N} =2$ pure
$\SU(2)$ gauge theory in arbitrary $\Omega$-background \cite{AldayGaiottoTachikawa2010, MaulikOkounkov2019, SchiffmannVasserot2013} (see~\cite[Remark
4.9]{ChidambaramDolegaOsuga2024}). Thus, the Gaiotto state can also be computed by refined topological recursion, but this study is beyond the scope of this paper.

\subsection{Maps and bipartite maps with internal faces on
  arbitrary surface}
\label{subsec:MapsInternal}

The main motivation to study the extension to internal
faces  in \cref{sec:Internal} comes from the fact that the associated $\omega_{g,n}$ produces generating
functions of special interest in enumerative combinatorics and quantum
gravity, as we will explain below.

Recall that $ \phi_{g,n}^D(X_1,\ldots,X_n)$ defined
in~\cref{sec:Internal} is the generating series for  colored monotone Hurwitz
numbers of genus $g$ with $n$ marked boundaries and with internal faces of degree $i$ weighted by
$\epsilon p_i$. It was proved in~\cite{ChapuyDolega2022} that
for two special cases of $G$-weighted $\bb$-Hurwitz numbers the
function $  \phi_{g,n}^D(X_1,\ldots,X_n)$ posses a different, particularly
simple, combinatorial interpretation that turns into two classical generating functions for
$\bb=0$ and $\bb = \sqrt{2}^{-1}-\sqrt{2}$.

Recall that a map is a 2-cell embedding of a multigraph
on a compact surface (orientable or not). We say that a map $M$  is
bipartite if one can partition its vertex set $V(M) = V_\circ(M) \cup
V_\bullet(M)$ into disjoint subsets such that no edge links two vertices
of the same subset. The degree of a face $f$ of a map $M$ is the number of corners inside $f$ that are adjacent to $V_\circ(M)$. By convention, we set $V(M) =
V_\circ(M)$ if we work with ordinary (not bipartite)
maps. We are interested in enumerating maps with $n$ marked faces of
degrees $k_1,\dots,k_n$, that we will call \emph{boundaries} and arbitrarily many faces of degree at most
$D$ that we will call \emph{internal faces},  as in
\cref{subsec:InternalFaces}. As in \cref{subsec:InternalFaces}, each boundary has a
distinguished corner that ``marks'' this face as a
boundary. See \cref{fig:TorusInternal} for an illustration of a bipartite map on the torus with internal faces of degree at most $3$.

\begin{figure}[h]
	\centering
	\includegraphics[width=0.8\textwidth
	]{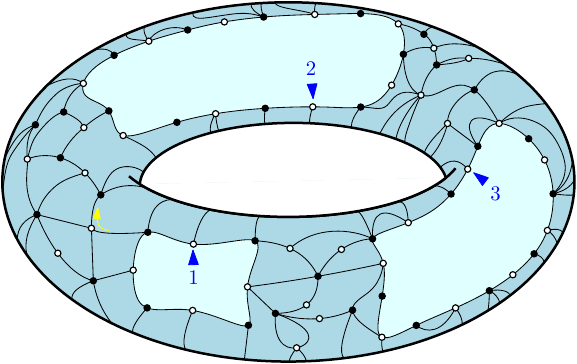}
	\caption{A bipartite map on the torus with three boundaries of
		degrees $4,9,9$, and with internal faces of degree at most
		$3$, so that it contributes to the coefficient of
		$[X_1^{-5}X_2^{-10}X_3^{-10}]$ in $ \phi_{1,3}^{{\rm
				bip},3}$. The marked corners are indicated by blue arrows,
		and the root is indicated by the yellow oriented arrow.} 
	\label{fig:TorusInternal}
\end{figure} 

Finally, we will be interested in generating series for
\emph{rooted} maps, which means that maps will have a distinguished
and oriented corner called the \emph{root}. This can be any corner in $M$,
including the marked corners lying on boundaries. For a rooted map $M$, one can use the root to define an order on the
set of edges. Then one can define the measure of non-orientability of
$M$ by
successively removing the edges from $M$ and collecting a
        weight $1$ or $-\sqrt{\alpha}\bb$ as in \cref{def:MON}, so
        that the final weight is of the form $(-\sqrt{\alpha}\bb)^{\rho(M)}$ for
                      some non-negative integer
                      $\rho(M)$. We will not need this specific
                      algorithm here, so we refer the reader to~\cite[Section
                      3]{ChapuyDolega2022} for more details. In the special cases of $\bb=0$ and $\bb =
\sqrt{2}^{-1}-\sqrt{2}$, the role of the root is not
important any more, and instead of counting rooted maps
weighted by the inverse of the number of its edges, we could equivalently count unrooted maps
weighted by the inverse of the size of their automorphism group.

\begin{remark}
  \label{rem:NonGenericB}
  One important property of this measure is that for $\bb = \sqrt{2}^{-1}-\sqrt{2}$ the weight $(-\sqrt{\alpha}\bb)^{\rho(M)} \equiv 1$ for all maps, and for $\bb = 0$ the weight $(-\sqrt{\alpha}\bb)^{\rho(M)} = 1$ only for orientable maps $M$ and vanishes for non-orientable maps.
\end{remark}

Define the generating series of all (orientable or non-orientable) rooted maps of genus $g$ with $n$ marked boundaries of degree $k_1,\dots,k_n$, and internal faces of degree at most $D$ weighted by the variable $\epsilon p_{\deg(f)}$ as
\begin{equation*}
  \phi_{g,n}^{{\rm maps},D}(X_1,\ldots,X_n) := \alpha^{-g} \sum_{k_1,\dots,k_n \geq 1}\prod_{i=1}^n\frac{dX_i}{X_i^{k_i+1}}\sum_{M}(-\sqrt{\alpha}\bb)^{\rho(M)}\frac{t^{2|E(M)|}}{2|E(M)|} u^{|V(M)|}\prod_{f \in F_i(M)}(\epsilon\cdot p_{\deg(f)}).
\end{equation*}
Similarly, for bipartite maps, define $\phi_{g,n}^{{\rm bip},D} $ as
\begin{equation*}
  \phi_{g,n}^{{\rm bip},D} (X_1,\ldots,X_n) := \alpha^{-g}\sum_{k_1,\dots,k_n \geq 1}\prod_{i=1}^n\frac{dX_i}{X_i^{k_i+1}}\sum_{M}(-\sqrt{\alpha}\bb)^{\rho(M)}\frac{t^{|E(M)|}}{|E(M)|}u_\circ^{|V_\circ(M)|}u_\bullet^{|V_\bullet(M)|}\prod_{f \in F_i(M)}(\epsilon\cdot p_{\deg(f)})
\end{equation*} 
to be the generating series of all (orientable or non-orientable) rooted bipartite maps of genus $g$ with $n$ marked boundaries of degree $k_1,\dots,k_n$, and internal faces of degree at most $D$ weighted by the variable $\epsilon p_{\deg(f)}$.

Then the main result of this section is that these generating functions can be computed by refined topological recursion.
\begin{theorem}  \label{theo:GenSerMaps} For maps and bipartite maps, we have
  \begin{itemize}
    \item{\emph{Maps:}} We have
    \begin{equation*}
      \phi_{g,n}^{{\rm maps},D} (X(z_1),\ldots,X(z_n))= \omega_{g,n}^{{\text{G$\beta$E}},D}(z_1,\ldots,z_n)- \delta_{g,0}\delta_{n,1} \epsilon V'(X(z_1))dX(z_1)
    \end{equation*}
    where $\omega_{g,n}^{{\text{G$\beta$E}},D}$ are the refined topological recursion correlators on the spectral curve given in \cref{sub:InternalGBE}.
    \item{\emph{Bipartite maps:}} We have
    \begin{equation*}
      \phi_{g,n}^{{\rm bip},D} (X(z_1),\ldots,X(z_n)) = \omega_{g,n}^{{\rm bip},D} (z_1,\ldots,z_n)- \delta_{g,0}\delta_{n,1} \epsilon V'(X(z_1))dX(z_1)-\delta_{g,\frac12}\delta_{n,1}\frac{\bb}{2X(z_1)}dX(z_1),
    \end{equation*}
    where $\omega_{g,n}^{{\rm bip},D}$ are refined topological recursion correlators on the spectral curve $\mathcal{S}_{\bm\mu}^{D}$ defined in \cref{sec:other weights} for $G(z) = (u_\circ+z) (u_\bullet+z)$.
  \end{itemize}
\end{theorem}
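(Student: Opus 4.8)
The plan is to deduce \cref{theo:GenSerMaps} by combining two ingredients that are already in place: the purely combinatorial identification of the colored monotone Hurwitz map generating series $\phi_{g,n}^D$ with the map and bipartite-map generating series, due to \cite{ChapuyDolega2022}, together with the refined topological recursion results proved earlier, namely \cref{thm:w/other} for rational weights and \cref{thm:RTRGbetaE} (in \cref{sec:Appendix}) for the Gaussian case. All the hard analytic content lives in those theorems, so the present statement is in essence a translation through a combinatorial dictionary.

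\textbf{Bipartite maps.} For the polynomial weight $G(z)=(u_\circ+z)(u_\bullet+z)$, \cite{ChapuyDolega2022} identifies $\phi_{g,n}^D$ with $\phi_{g,n}^{\mathrm{bip},D}$: the two linear factors $u_\circ,u_\bullet$ track the two vertex colors $V_\circ,V_\bullet$, the non-orientability weight $(-\sqrt{\alpha}\bb)^{\nu}$ and the internal-face weight $\epsilon\,p_{\deg(f)}$ match, and the prefactor $\alpha^{-g}$ together with the rooting convention $\tfrac{1}{|E(M)|}$ reproduces the automorphism normalization of \cref{prop:HurwitzWithBoundaries}. First I would verify this dictionary carefully and then apply \cref{thm:w/other} with $G(z)=(u_\circ+z)(u_\bullet+z)$. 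Since this is a polynomial weight we are in the case $S=1$, which yields exactly the $\tfrac{\bb}{2X(z_1)}dX(z_1)$ shift in the $(g,n)=(\tfrac12,1)$ term together with the $\epsilon V'(X(z_1))\,dX(z_1)$ shift of $\omega_{0,1}$, giving the claimed formula.

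\textbf{Maps.} The weight governing (not necessarily bipartite) maps is \emph{not} one of the rational weights in \eqref{eq:allowedweights}: ordinary maps are enumerated by the Gaussian $\beta$-ensemble, i.e.\ the potential $V(x)=\tfrac{x^2}{2}$. The identification of $\phi_{g,n}^{\mathrm{maps},D}$ with the G$\beta$E generating series is again supplied by \cite{ChapuyDolega2022}, after the analogous normalization check, now with a single vertex weight $u$ and an edge weight $t^{2|E(M)|}$ reflecting that each edge of a map is built from a pair of half-edges. I would then invoke the Gaussian refined spectral curve of \cref{sub:InternalGBE} and the corresponding statement of \cref{thm:RTRGbetaE}, extended to internal faces by the same variational argument as in \cref{thm:w/}, to conclude that $\omega_{g,n}^{\text{G$\beta$E},D}$ computes these correlators. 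Here only the $\omega_{0,1}$ correction $\epsilon V'(X(z_1))\,dX(z_1)$ appears and there is no $\tfrac{\bb}{2X}$ shift, consistently with the G$\beta$E line of \cref{thm:ensemble}.

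\textbf{Main obstacle.} The hard part is the maps case rather than the bipartite one, which follows mechanically from \cref{thm:w/other}. Because the Gaussian potential lies outside the rational-weight family, the loop-equation and spectral-curve analysis of \cref{sec:Internal} cannot be reused verbatim; one must instead construct and study the Gaussian refined spectral curve on its own, which is precisely the content of \cref{sec:Appendix}. A secondary but genuinely delicate task, present in both parts, is to match the combinatorial normalizations of \cite{ChapuyDolega2022} exactly — the genus factor $\alpha^{-g}$, the rooting-versus-automorphism convention, and the precise accounting of internal-face weights — so that the identity $\phi_{g,n}^D=\phi_{g,n}^{\mathrm{bip}/\mathrm{maps},D}$ holds on the nose and not merely up to an overall scalar.
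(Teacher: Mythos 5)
Your proposal is correct and follows essentially the same route as the paper: the combinatorial dictionary of \cite{ChapuyDolega2022} (with the \cref{prop:HurwitzWithBoundaries}-style argument to handle internal-face weights and normalizations) reduces the statement to \cref{thm:w/other} with $G(z)=(u_\circ+z)(u_\bullet+z)$, $S=1$, for bipartite maps, and to the Gaussian results of \cref{sec:Appendix} for maps. Your ``extension of \cref{thm:RTRGbetaE} to internal faces by the variational argument'' is precisely \cref{theo:InternalGBE}, which the paper's proof cites directly, so no new work is actually needed there.
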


\begin{proof}
  Let $F^{\text{G$\beta$E}}_{g,n} [\mu_1,\ldots,\mu_n]$ be defined by \eqref{eq:TauGBE}, and $F^{{\rm bip}}_{g,n} [\mu_1,\ldots,\mu_n] = F^{G}_{g,n} [\mu_1,\ldots,\mu_n]$ for $G(z) = (u_\circ+z) (u_\bullet+z)$.  Then, \cite{ChapuyDolega2022} proves that
  \begin{align*}
    \frac{\alpha^g}{|\Aut(\mu)|\prod_{i=1}^n\mu_i}F^{\text{G$\beta$E}}_{g,n}[\mu_1,\ldots,\mu_n]&=\frac{t^{2(k_1+\dotsm+k_n)}}{2(k_1+\dotsm+k_n)}\sum_{M}(-\sqrt{\alpha}\bb)^{\rho(\mathcal{M})}u^{|V(M)|},\\
    \frac{\alpha^g}{|\Aut(\mu)|\prod_{i=1}^n\mu_i}F^{{\rm bip}}_{g,n}[\mu_1,\ldots,\mu_n]&=\frac{t^{k_1+\dotsm+k_n}}{k_1+\dotsm+k_n}\sum_{M}(-\sqrt{\alpha}\bb)^{\rho(\mathcal{M})}u_\circ^{|V_\circ(M)|}u_\bullet^{|V_\bullet(M)|},
  \end{align*}
  where the first sum is over all rooted maps of genus $g$ with $n$ faces of degrees $\mu_1,\dots,\mu_n$, and the second sum is over all rooted bipartite maps of genus $g$ with $n$ faces of degrees $\mu_1,\dots,\mu_n$. Define $F^{\text{G$\beta$E},D}_{g,n}[k_1,\ldots,k_n;\epsilon]$ by \eqref{eq:InternalGBE} and $F^{{\rm bip},D}_{g,n}[k_1,\ldots,k_n;\epsilon]$ by \eqref{eq:HurwitzWithBoundaries} for $G(z) = (u_\circ+z) (u_\bullet+z)$. Then, using the same arguments as in the proof of \cref{prop:HurwitzWithBoundaries} one can show that for both maps and bipartite maps, we have
  \begin{align*} 
    \phi_{g,n}^{{\rm maps (bip)},D} (X_1,\ldots,X_n) &= \sum_{k_1,\dots,k_n \geq 1}\prod_{i=1}^n\frac{dX_i}{X_i^{k_i+1}} F^{\text{G$\beta$E (bip)},D}_{g,n}[k_1,\ldots,k_n;\epsilon].
  \end{align*}
  Then, the first part of \cref{theo:GenSerMaps} concerning maps follows from \cref{theo:InternalGBE}, while the second part concerning bipartite maps follows from \cref{thm:w/other}.
\end{proof}

\begin{corollary}
 For $2g-2+n \geq 1$ the  formal series $\phi_{g,n}^D(X(z_1),\ldots,X(z_n))$from \cref{theo:GenSerMaps}, for both maps and bipartite maps, can be written as a rational multidifferential in $z_1,\dots,z_n$.
                        \end{corollary}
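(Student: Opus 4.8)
The plan is to deduce the statement directly from the identification furnished by \cref{theo:GenSerMaps}, with essentially no additional analytic work. First I would observe that the explicit correction terms appearing in \cref{theo:GenSerMaps}---namely the shift $\delta_{g,0}\delta_{n,1}\,\epsilon V'(X(z_1))\,dX(z_1)$ and, in the bipartite case, the extra $\delta_{g,\frac12}\delta_{n,1}\frac{\bb}{2X(z_1)}dX(z_1)$---are supported only on $(g,n)\in\{(0,1),(\frac12,1)\}$, both of which satisfy $2g-2+n\le 0$. Hence throughout the stable range $2g-2+n\ge 1$ all corrections vanish identically, and \cref{theo:GenSerMaps} yields the clean equality
\[
\phi_{g,n}^{D}(X(z_1),\ldots,X(z_n)) = \omega_{g,n}^{D}(z_1,\ldots,z_n),
\]
where $\omega_{g,n}^{D}$ abbreviates $\omega_{g,n}^{\text{G$\beta$E},D}$ for maps and $\omega_{g,n}^{\mathrm{bip},D}$ for bipartite maps.

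Next I would recall that in both cases these $\omega_{g,n}^{D}$ are refined topological recursion correlators on a \emph{genus-zero} refined spectral curve, i.e.\ one whose underlying Riemann surface is $\Sigma=\mathbb{P}^1$ in the sense of \cref{def:RSC}, as is the case throughout the paper. The pole structure theorem of \cite{KidwaiOsuga2023,Osuga2024a} quoted above guarantees that every stable correlator is a symmetric \emph{meromorphic} multidifferential on $\Sigma^n$. Since $\Sigma=\mathbb{P}^1$, in the global coordinate $z$ any meromorphic differential has the form $f(z)\,dz$ with $f$ rational, and a meromorphic multidifferential on $(\mathbb{P}^1)^n$ is correspondingly rational in $z_1,\ldots,z_n$ (meromorphic functions on a projective variety are rational). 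Combined with the displayed equality, this gives the claim.

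The argument presents no genuine obstacle; the only point requiring care is the bookkeeping in the first step---verifying that every $(g,n)$ for which \cref{theo:GenSerMaps} carries a correction term lies strictly below the stable threshold $2g-2+n\ge 1$, so that $\phi_{g,n}^{D}$ agrees with the RTR correlator on the nose rather than up to an additive differential. If one preferred a self-contained derivation of rationality, one could instead induct on $2g-2+n$ using the refined topological recursion formula \eqref{eq:RTR}: the unstable inputs $\omega_{0,1}=y\,dx$, $\omega_{0,2}=-B(z_1,\sigma(z_2))$, and $\omega_{\frac12,1}$---whose $-d\tilde Y/\tilde Y$ and $\eta^c$ pieces are the logarithmic-derivative and residue differentials of rational functions---are all rational, and each application of \eqref{eq:RTR} integrates rational data against the rational kernel $\eta^z/(\omega_{0,1}-\sigma^{*}\omega_{0,1})$, so the output is again rational. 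Here the role of the pole structure theorem is precisely to certify that this recursion produces honest meromorphic objects on $\mathbb{P}^1$ rather than mere germs, which is exactly what upgrades ``formal series'' to ``rational multidifferential.''
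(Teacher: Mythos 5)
Your proposal is correct and matches the paper's (implicit) argument: the paper treats the corollary as a straightforward consequence of \cref{theo:GenSerMaps}, exactly as you do, since the correction terms live only at $(g,n)=(0,1),(\tfrac12,1)$ which lie outside the range $2g-2+n\geq 1$, and the stable RTR correlators on the genus-zero curve $\Sigma=\mathbb{P}^1$ are symmetric meromorphic, hence rational, multidifferentials. Your careful bookkeeping of the unstable shifts and the optional inductive rationality argument are both sound and consistent with what the paper leaves to the reader.
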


While the above corollary is a straightforward consequence of refined topological recursion, the problem of rational parametrization of generating series in enumerative combinatorics of maps has a rich history of studies employing algebraic, analytic, and bijective methods~\cite{BenderCanfield1986,BenderCanfield1991,Gao1993,BousquetJehanne2006,Chapuy2009,ChapuyMarcusSchaeffer2009,ChapuyFang2016,ChapuyDolega2017}. In the special case $\bb = 0$, which corresponds to the generating series of orientable maps/bipartite maps, the rational parametrization of generating series was known before. It was recently reproved using topological recursion in~\cite{BonzomChapuyCharbonnierGarcia-Failde2024,BychkovDuninBarkowskiKazarianShadrin2025} for a much wider class of maps than the ones studied here. See also \cite{BranahlHock2025} for an efficient way of computing bipartite maps from topological recursion in the unrefined setting

On the other hand, the rational parametrization even in the special case of $\bb=\sqrt{2}^{-1}-\sqrt{2}$, which corresponds to the generating series of non-oriented maps/bipartite maps, is a new result obtained here. In fact, while the rational parametrization manifests itself through topological recursion in both the refined and unrefined setting, the analytic structure of the correlators in the refined setting differs substantially. Indeed, the RTR correlators have poles along the ``anti-diagonals'' $z_i = \sigma(z_j) $ and at $z_i \in \sigma(\mathcal P_+)$, for any $i,j \in [n]$. 

It was noticed in the past that the structure of the generating series of non-oriented maps is not always as simple as their orientable counterparts~\cite{ArquesGiorgetti2000,DolegaLepoutre2022,BonzomChapuyDolega2022}. We plan to address this problem in  future work, especially from the point of view of a conjectural universal pattern in asymptotic enumeration that so far was explained in the orientable case by the structural properties of TR~\cite{Eynard2016}, but in the non-orientable case remains elusive.

\appendix

\section{Gaussian Beta Ensembles}
\label{sec:Appendix}

So far, we studied tau functions of weighted single $\bb$-Hurwitz numbers,
which are obtained from tau functions of weighted double $\bb$-Hurwitz numbers $\tau_G^{{(\bb)}}(\tilde{\pp}, \tilde{\qq})$ by the
specialization $\tau_G^{(\bb)} := \tau_G^{(\bb)}(\tilde{\pp},
\tilde{\qq})\big|_{q_i := \delta_{i,1}}$. This more
general tau function for double Hurwitz numbers is defined as
\begin{equation*}
    \tau_G^{(\bb)}(\tilde{\pp}, \tilde{\qq})=\sum_{d \geq 0} t^d\sum_{\lambda \vdash d} 
		\frac{J_\lambda^{(\alpha)}(\sqrt{\alpha}\tilde{\pp}) J_\lambda^{(\alpha)}(\sqrt{\alpha}\hbar^{-1}\tilde{\qq})}{j_\lambda^{(\alpha)}}\prod_{\square
                  \in \lambda}G(\hbar\cdot
  \tilde{c}_{\alpha}(\square)).
\end{equation*}

In this appendix we briefly discuss one extra case of weighted
$\bb$-Hurwitz theory that is of special interest due to its relation to
random matrices and combinatorial maps. We define
$\tau_{\text{G$\beta$E}}^{(\bb)}$ as the following specialization of $\tau_G^{(\bb)}(\tilde{\pp}, \tilde{\qq})$
\begin{align}
  \label{eq:TauGBE}
    \tau_{\text{G$\beta$E}}^{(\bb)} :=\tau_G^{(\bb)}(\tilde{\pp}, \tilde{\qq})\big|_{\tilde{q}_i = \delta_{i,2}}=\exp\left(\sum_{g\in\frac12\mathbb{Z}_{\geq0}}\sum_{n\in\mathbb{Z}_{\geq1}}\sum_{\mu_1,\ldots,\mu_n\in\mathbb{Z}_{\geq1}}\frac{\hbar^{2g-2+n}}{n!}\;
  F^{\text{G$\beta$E}}_{g,n} [\mu_1,\ldots,\mu_n] \;\frac{\tilde p_{\mu_1}}{\mu_1}\cdots
  \frac{\tilde p_{\mu_n}}{\mu_n}\right) ,
\end{align}
with the choice $G(z) = u+z$.  We use the terminology G$\beta$E, given that this model essentially coincides with the Gaussian $\beta$ ensemble considered in \cref{subsec:BetaEns}. Although $\tau_{\text{G$\beta$E}}^{(\bb)}$ is not covered by the results of \cite{ChidambaramDolegaOsuga2024}, the  constraints required to apply the methods of this paper are known.  More precisely, we have the following  Virasoro constraints\footnote{There is a sign typo in \cite[Proposition A.1]{BonzomChapuyDolega2023}; the corresponding tau functions defined by eq. (81) should be corrected by replacing the minus sign in front of $c_b$ by a plus sign.}.

\begin{proposition}[\cite{AdlervanMoerbeke2001,BonzomChapuyDolega2023}]
For all $k\in\mathbb{Z}_{\geq-1}$, the operators $L_k$ defined by
\begin{align*}
L_k= \sum_{j\geq 1}  \mathsf J_{k-j}  \mathsf J_j+
  \big(2u-\bb\hbar(k+1)\big) \mathsf J_k
  -u \mathsf J_{-1}\delta_{k,-1}+\left(u^2-u\hbar\bb\right)
  \delta_{k,0}-\frac{1}{t^2}\mathsf J_{k+2}.
\end{align*}
annihilate the tau function:
\begin{equation*}
    L_k \tau_{\text{G$\beta$E}}^{(\bb)}=0, \quad \text{ for } k \geq -1.
\end{equation*}
\end{proposition}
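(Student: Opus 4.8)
The plan is to verify $L_k\tau_{\text{G}\beta\text{E}}^{(\bb)}=0$ directly from the Jack expansion \eqref{eq:TauGBE}, by the same mechanism that underlies the CDO constraints of \cref{theo:CDO}: the $L_k$ are twisted Virasoro modes (their quadratic part $\frac12\sum_j \mathsf J_{k-j}\mathsf J_j$ is the Sugawara current, and $-\bb\hbar(k+1)\mathsf J_k$ is the background--charge twist responsible for the central charge $1-6\bb^2$), and one shows that the weighted sum of Jack polynomials defining $\tau_{\text{G}\beta\text{E}}^{(\bb)}$ is annihilated by them. Three inputs are needed: (i) the action of the Heisenberg modes on the Jack basis --- $\mathsf J_{-k}$ ($k>0$) is multiplication by $\hbar\tilde p_k$ and $\mathsf J_k$ is $\hbar k\partial_{\tilde p_k}$ --- together with property \textbf{J1}, which identifies the content sum $\sum_\square\tilde c_\alpha(\square)$ as the Laplace--Beltrami eigenvalue; (ii) for $G(z)=u+z$, the one-box recursion $\prod_{\mu}(u+\hbar\tilde c_\alpha)=\big(u+\hbar\tilde c_\alpha(\square)\big)\prod_{\lambda}(u+\hbar\tilde c_\alpha)$ for $\mu=\lambda\cup\{\square\}$; and (iii) the evaluation of the second Jack factor at $\tilde q_i=\delta_{i,2}$.

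First I would record how each summand of $L_k$ acts on a single term $J_\lambda^{(\alpha)}(\sqrt\alpha\tilde p)$ and re-expand the result in the Jack basis. The crucial computation is the Sugawara piece: expanding $\sum_{j\geq1}\mathsf J_{k-j}\mathsf J_j$ into creation and annihilation contributions and collecting them against the content weight $\prod_\square(u+\hbar\tilde c_\alpha)$, one reorganizes the $t$-graded sum so that the box-adding and box-removing terms recombine, exactly as in the derivation of the CDO operator \eqref{Voperator} in \cite{ChidambaramDolegaOsuga2024}. The linear term $2u\,\mathsf J_k$ is produced by the $u$ in $G$, the twist $-\bb\hbar(k+1)\mathsf J_k$ reproduces the anomaly already visible in $\tilde D_\alpha$, and the constants at $k=-1,0$ arise from the lowest-degree (string and dilaton) contributions. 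Here the normalizations \textbf{J2} and \textbf{J3} are what fix all the coefficients.

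The feature absent from \cref{theo:CDO} is the source term $-t^{-2}\mathsf J_{k+2}$, which comes entirely from the second Jack factor evaluated at $\tilde q_i=\delta_{i,2}$. I would use that $J_\lambda^{(\alpha)}$ restricted to the locus where only $\tilde p_2$ is nonzero is supported on partitions with empty $2$-core, with an explicit nonzero value otherwise; the index $k+2$ and the factor $t^{-2}$ precisely encode this ``insert a domino'' operation and the accompanying shift $d\mapsto d-2$ of the $t$-degree. Matching the coefficient of $t^d$ on both sides of $L_k\tau_{\text{G}\beta\text{E}}^{(\bb)}=0$ then reduces the whole statement to a finite identity relating the degree-$d$ and degree-$(d-2)$ Jack data via box addition and the content recursion, which is closed by the inputs above.

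I expect the main obstacle to be the Sugawara step: controlling $\sum_{j\geq1}\mathsf J_{k-j}\mathsf J_j$ on the Jack basis and showing that it recombines correctly with the content weight while tracking the $\bb$-twist is exactly where signs and the precise normalization of $J_\lambda^{(\alpha)}$ enter, and it is the origin of the sign correction flagged in the footnote. A conceptually cleaner but less self-contained route would import the $\beta$-deformed Gaussian Virasoro constraints of \cite{AdlervanMoerbeke2001}, obtained by integration by parts (reparametrization invariance $\lambda_i\mapsto\lambda_i+\varepsilon\lambda_i^{k+1}$) in the integral \eqref{eq:BetaEns}, and then identify the resulting operators with the $L_k$ above through the moment/Hurwitz dictionary of \cite{Okounkov1997,BonzomChapuyDolega2023} under $\beta=1/\alpha$; even then, the $\tilde q_2$-evaluation computation is still required to normalize the $t^{-2}\mathsf J_{k+2}$ term.
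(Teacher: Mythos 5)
The paper does not actually prove this proposition: it is imported wholesale from \cite{AdlervanMoerbeke2001,BonzomChapuyDolega2023} (Proposition A.1 of the latter), with the footnote only correcting a sign typo there. So the benchmark is the cited literature, and your second, ``less self-contained'' route --- integration by parts $\lambda_i\mapsto\lambda_i+\varepsilon\lambda_i^{k+1}$ in the $\beta$-ensemble integral, followed by Okounkov/La Croix's identification of the matrix-model correlators with the Jack sum \eqref{eq:TauGBE} --- is precisely the route those references embody, and it is sound. One small correction to your last sentence: on that route no separate $\tilde q_2$-evaluation computation is needed to ``normalize'' the term $-t^{-2}\mathsf J_{k+2}$; it arises directly from the Gaussian potential, since $V'(\lambda_i)\,\lambda_i^{k+1}=\lambda_i^{k+2}$ produces an insertion of $p_{k+2}$, i.e.\ the operator $\mathsf J_{k+2}$, with the $t^{-2}$ coming from the rescaling $\lambda_k\mapsto t\lambda_k$ in \eqref{MMcorrelators}. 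The identification theorem is the only external input.

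Your primary route, however, has a genuine gap. You make the claim that $J_\lambda^{(\alpha)}$ evaluated at the pure-$\tilde p_2$ specialization is supported on partitions with empty $2$-core, and you let this claim carry the production of the $-t^{-2}\mathsf J_{k+2}$ term (``insert a domino''). That support statement is a theorem only at $\alpha=1$: it is the Murnaghan--Nakayama fact that $\chi^\lambda(2^k)=0$ unless the $2$-core of $\lambda$ is empty. For general $\alpha$ no such vanishing theorem exists, and there is no reason to expect one --- the mechanism behind it at $\alpha=1$ is plethysm by $p_2$ (the Adams operation and its adjoint), which is notoriously incompatible with the Jack basis for $\alpha\neq 1$; note also that \eqref{eq:TauGBE} and the G$\beta$E/map literature at $\alpha=2$ sum over \emph{all} partitions with the $\delta_{i,2}$-evaluation as a generically nonzero weight. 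A direct Jack-theoretic proof (in the style of \cite{BonzomChapuyDolega2023} or of \cref{theo:CDO}) does go through, but by a different mechanism that needs no support statement: one works with the double tau function $\tau_G^{(\bb)}(\tilde\pp,\tilde\qq)$ and uses adjointness with respect to the $\alpha$-deformed inner product to transfer operators between the $\tilde\pp$- and $\tilde\qq$-sides through the Cauchy kernel $\sum_\lambda J_\lambda\otimes J_\lambda/j_\lambda$; after the specialization $\tilde q_i=\delta_{i,2}$, the surviving $\tilde\qq$-side operator is exactly what becomes $-t^{-2}\mathsf J_{k+2}$ (just as $\tilde q_i=\delta_{i,1}$ yields the shift-by-one term $\mathsf J_{k+1}$ in \eqref{Voperator}). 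As written, your coefficient-of-$t^d$ recursion is anchored to a false input, so the argument would not close; replacing it with the adjoint-transfer (or simply resting on the matrix-model route) repairs it.
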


For any   $(g,n)\in\frac12\mathbb{Z}_{\geq0} \times \mathbb{Z}_{\geq1}$, we  consider the  generating functions 
\begin{align*}
    W_{g,n}^{{\text{G$\beta$E}}}(x_1,\ldots,x_n) := [\hbar^{2g-2+n}]\prod_{i=1}^n\nabla(x_i)\log\tau_{{\text{G$\beta$E}}}^{(\bb)}\bigg|_{\pp=0}.
\end{align*} We also consider a shifted version which is helpful for the calculations below, defined as $\tilde W_{g,n}^{{\text{G$\beta$E}}}(x_1,\ldots,x_n)  := W_{g,n}^{{\text{G$\beta$E}}}(x_1,\ldots,x_n) -\delta_{g,0}\delta_{n,1}\left(\frac{x_1}{2t^2}-\frac{u}{x_1}\right)dx_1$.
Then, we obtain the following loop equations, whose  proof is completely parallel to  \cref{prop:FLE} and hence omitted.

\begin{proposition} We have the following formal loop equations
\begin{equation} \label{formalRLEGUE}
     Q^{\tilde W}_{g,1+n}(x;x_{[n]}) = \delta_{g,0}\delta_{n,0}\left(\frac{x^2}{4t^4}-\frac{u}{t^2}\right)dx^2 - \delta_{g,\frac12}\delta_{n,0}\left(\bb\frac{1}{2t^2}\right)dx^2 + dx^2\sum_{i=1}^nd_i\left(\frac{\tilde W_{g,n}^{{\text{G$\beta$E}}}(J)}{(x-x_i)dx_i}\right), 
\end{equation}
where $d$ and $d_i$ denote the exterior derivative with respect to $x$ and $x_i$ respectively.
\end{proposition}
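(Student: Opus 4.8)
The plan is to repeat the derivation of Proposition~\ref{prop:FLE} verbatim, with the operators $D_k$ of Theorem~\ref{theo:CDO} replaced by the Virasoro operators $L_k$ and the weight specialized to $G(z)=u+z$. Concretely, I would observe that \eqref{formalRLEGUE} is precisely the statement that
\begin{equation*}
	[\hbar^{2g+n}]\left(\prod_{i=1}^n\nabla(x_i)\left(\frac{1}{\tau_{\text{G$\beta$E}}^{(\bb)}}\sum_{k\geq -1}\frac{L_k\,(dx)^2}{x^{k+2}}\,\tau_{\text{G$\beta$E}}^{(\bb)}\right)\right)\Bigg|_{\tilde{\pp}=0}=0,
\end{equation*}
where the sum now starts at $k=-1$ because of the index shift carried by the $\mathsf J_{k+2}$ term, and then evaluate the contribution of each summand of $L_k$ exactly as in the proof of Proposition~\ref{prop:FLE}. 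Here and below I write $W_{g,n}$ (resp. $\tilde W_{g,n}$) for the G$\beta$E generating functions, suppressing the superscript.

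The quadratic piece $\sum_{j\geq1}\mathsf J_{k-j}\mathsf J_j$ is treated by the same three-way splitting as in \eqref{eq:JJterms}: it produces the $W_{g-1,2+n}(x,x,x_{[n]})$ term, the bilinear sum $\sum_{g_1+g_2=g}W_{g_1}W_{g_2}$, and the $\sum_i\frac{W_{g,n}\,dx\,dx_i}{(x-x_i)^2}$ term, which together assemble into $Q^{W}_{g,1+n}$, plus a $d_{x_i}$-correction coming from the negative-index part. The term $-\bb\hbar(k+1)\mathsf J_k$ is, as before, the source of the $\bb\,d_x\frac{W_{g-\frac12,n+1}}{dx}$ summand hidden inside $Q$. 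The two remaining linear summands are genuinely new features: reindexing $-\frac{1}{t^2}\sum_{k\geq-1}\frac{\mathsf J_{k+2}(dx)^2}{x^{k+2}}$ by $m=k+2\geq1$ turns it into $-\frac{x\,dx}{t^2}\sum_{m\geq1}\frac{\mathsf J_m\,dx}{x^{m+1}}$, contributing $-\frac{x}{t^2}\,dx\,W_{g,1+n}$, while $2u\,\mathsf J_k$ contributes $\frac{2u}{x}\,dx\,W_{g,1+n}$. Finally, the boundary summands $-u\,\mathsf J_{-1}\delta_{k,-1}$ and $(u^2-u\hbar\bb)\delta_{k,0}$ contribute only at the lowest levels and, after extracting the appropriate power of $\hbar$, account for the $\delta_{g,0}\delta_{n,0}$ and $\delta_{g,\frac12}\delta_{n,0}$ source terms on the right of \eqref{formalRLEGUE}.

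The key structural point is the passage to $\tilde W_{g,n}=W_{g,n}-\delta_{g,0}\delta_{n,1}\left(\frac{x_1}{2t^2}-\frac{u}{x_1}\right)dx_1$. Writing $S=\left(\frac{x}{2t^2}-\frac{u}{x}\right)dx$ for the shift, the product term of $Q$ gives $Q^{W}_{g,1+n}=Q^{\tilde W}_{g,1+n}+2S\,W_{g,1+n}$ for $(g,n)\neq(0,0)$, and one checks that $2S=\left(\frac{x}{t^2}-\frac{2u}{x}\right)dx$ cancels exactly the anomalous bulk contribution $\left(\frac{2u}{x}-\frac{x}{t^2}\right)dx\,W_{g,1+n}$ identified above, leaving $Q^{\tilde W}_{g,1+n}$ equal to the source plus the kernel term. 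At the base level $(g,n)=(0,0)$ the product term is $W_{0,1}^2=\tilde W_{0,1}^2+2S\tilde W_{0,1}+S^2$, so the same cancellation becomes a completion of the square, $\tilde W_{0,1}^2=S^2+(\text{scalar source})$, which is what produces the clean spectral-curve polynomial $\frac{x^2}{4t^4}-\frac{u}{t^2}$; at level $(g,n)=(\tfrac12,0)$ it produces the stated $-\bb\frac{1}{2t^2}$ term.

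I expect the main obstacle to be bookkeeping rather than ideas: one must track the index shift $k\mapsto k+2$ and the extended range $k\geq-1$ consistently, verify that the negative-index part of the quadratic term combines with the linear contributions to yield exactly the clean kernel $\frac{\tilde W_{g,n}}{(x-x_i)\,dx_i}$ appearing on the right of \eqref{formalRLEGUE} (rather than the $\frac{x_i}{x(x-x_i)}$ kernel that arises before simplification), and confirm that the $k=-1,0$ boundary pieces—whose $\mathsf J_{-1}=\hbar\tilde p_1$ contributions feed into the $(g,n)=(0,1)$ level and are tied to the $-\frac{u}{x_1}dx_1$ part of the shift—match the $\delta$-supported sources with the correct coefficients. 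Since each of these is a direct analogue of a computation already carried out in Proposition~\ref{prop:FLE}, no new techniques are needed, which justifies the omission of the full proof.
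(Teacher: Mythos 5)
Your proposal is correct and takes essentially the same route as the paper: the paper omits this proof precisely because it is ``completely parallel'' to \cref{prop:FLE}, and your argument is exactly that parallel derivation, with the genuinely new features correctly identified (the extended range $k\geq-1$ and the reindexed $\mathsf J_{k+2}$ term producing the $-\frac{x}{t^2}\,dx$ contribution, the cancellation of the linear-in-$W_{g,1+n}$ terms against the shift $2S$, the completion of the square at $(g,n)=(0,0)$ giving $\frac{x^2}{4t^4}-\frac{u}{t^2}$, and the $\mathsf J_{-1}$/$\delta_{k,0}$ boundary pieces feeding the low levels). I confirmed the bookkeeping you flag does close up — in particular the $k=-1$ contribution $u\mathsf J_{-1}$ is exactly what lets the kernel term carry $\tilde W_{0,1}$ rather than $W_{0,1}$ at the $(0,2)$ level, and the generic-level identity you state acquires extra shift terms only at the levels $(0,2)$ and $(\tfrac12,1)$, which you treat separately.
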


From these loop equations, we can  derive the refined topological recursion. Let us first consider two meromorphic functions $x,y$ on $\mathbb{P}^1_z$ as
\begin{equation}
  \label{eq:SpectralGbetaE}
    x(z)=t\frac{1+uz^2}{z},\quad y(z)=\frac{u^2z^3}{t(1+u z^2)},
\end{equation} and define $\tilde y(z) := y(z)-\left(\frac{x(z)}{2t^2}-\frac{u}{x(z)}\right)$.
The functions $x$ and $\tilde y$ satisfy an algebraic relation
\begin{equation*}
\tilde y(z)^2-\frac{x(z)^2}{4t^4}+\frac{u}{t^2}=0,
\end{equation*}
which coincides with the equation \eqref{formalRLEGUE} for $(g,n)=(0,1)$ under the  identification $x = x(z)$ and $\tilde W^{\text{G$\beta$E}}_{0,1}(x)=\tilde y(z)dx(z)$. By removing the shifts, we immediately get that $ydx$ is the analytic continuation of $W^{\text{G$\beta$E}}_{0,1}$. We define $\omega^\text{G$\beta$E}_{0,1}:=ydx$ on $\mathbb{P}^1_z$ as usual.

Similar computations show that $\omega^\text{G$\beta$E}_{0,2}(z_1,z_2):=-B(z_1,\sigma(z_2))$ is the analytic continuation of $W^\text{G$\beta$E}_{0,2}(x_1,x_2)$ where $\sigma(z) = \frac{1}{uz}$. In order to fix the refined spectral curve $\mathcal{S}_{\bm\mu}^{{\text{G$\beta$E}}}$, we choose $\mathcal{P_+}=\{0\}$ and the associated parameter $\bm\mu=\{-1\}$ to define
\begin{equation}
\label{eq:RSpectralGbetaE}
    \omega_{\frac12,1}^{{\text{G$\beta$E}}}=\frac{\bb}{2}\left(-\frac{d\tilde y}{\tilde y}-\eta^0\right).
\end{equation}
Let us denote the correlators constructed by the refined topological recursion on $\mathcal{S}_{\bm\mu}^{{\text{G$\beta$E}}}$ by $\omega_{g,n}^{{\text{G$\beta$E}}}$ . Then, the $W^\text{G$\beta$E}_{g,n}$ coincide with the RTR correlators $\omega^{\text{G$\beta$E}}_{g,n}$ (note that there is no shift for $(g,n)= (\frac{1}{2},1)$ in contrast with \cref{thm:w/o}).

\begin{theorem}
	\label{thm:RTRGbetaE}
	For $(g,n) \in\frac12\mathbb{Z}_{\geq0} \times \mathbb{Z}_{\geq1}$, the G$\beta$E correlators analytically continue to the refined TR correlators $\omega^{\text{G$\beta$E}}_{g,n}$ on the refined spectral curve $\mathcal{S}_{\bm\mu}^{{\text{G$\beta$E}}}$. As a series expansion near $z_i = 0$ (where $x(z_i) = \infty$), we have
	\[
		\omega_{g,n}(z_1,\cdots,z_n) = \sum_{\mu_1,\cdots, \mu_n \geq 1} F_{g,n}\left[\mu_1,\cdots, \mu_n\right] \prod_{i=1}^n \frac{dx(z_i)}{x(z_i)^{\mu_i+1}}.
	\]
\end{theorem}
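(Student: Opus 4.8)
The plan is to transcribe the proof of \cref{thm:w/o} to the present curve, adapting only the three curve-specific ingredients: the identification of the unstable correlators, a kernel identity replacing \cref{lem:kernel}, and the source terms entering the recursion. Throughout I write $\tilde W_{g,n}$ for the shifted G$\beta$E generating functions. The cases $(g,n)=(0,1)$ and $(0,2)$ are already settled in the discussion preceding the theorem: the algebraic relation for $(x,\tilde y)$ is the $(0,0)$ instance of \eqref{formalRLEGUE} under $x=x(z)$, so $\tilde W_{0,1}$ continues to $\tilde y\,dx$ and, after undoing the shift, $W^{\text{G}\beta\text{E}}_{0,1}$ continues to $\omega^{\text{G}\beta\text{E}}_{0,1}=y\,dx$; and $\omega^{\text{G}\beta\text{E}}_{0,2}=-B(z_1,\sigma(z_2))$ matches $W^{\text{G}\beta\text{E}}_{0,2}$ by the argument of \cref{lem:unstable}, with $\sigma(z)=1/(uz)$.

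For $(\tfrac12,1)$ I would read off the $(\tfrac12,0)$ instance of \eqref{formalRLEGUE},
\begin{equation*}
	2\,\tilde W_{0,1}(x)\,\tilde W_{\frac12,1}(x) + \bb\,dx\,d_x\frac{\tilde W_{0,1}(x)}{dx} = -\frac{\bb}{2t^2}(dx)^2,
\end{equation*}
and verify that \eqref{eq:RSpectralGbetaE} satisfies the same relation once $\eta^0$ is expressed through $x,\tilde y$ exactly as in the derivation of \eqref{eta0}, using the pole structure of $\tilde y$ dictated by the curve equation. Since $\tilde W_{\frac12,1}=W^{\text{G}\beta\text{E}}_{\frac12,1}$ carries no shift, this also explains the absence of a correction term for $(\tfrac12,1)$ in the statement.

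For the stable range $2g-2+n>0$ I would argue by induction on $2g-2+n$. Solving \eqref{formalRLEGUE} for the top correlator gives the evident analogue of \cref{cor:FLE}, expressing $\tilde W_{g,1+n}$ through $\operatorname{Rec}^{\tilde W}_{g,1+n}$ and lower correlators; by the inductive hypothesis the right-hand side continues to $\Sigma$, so $W^{\text{G}\beta\text{E}}_{g,1+n}$ continues to a differential $\check W_{g,1+n}$. To identify $\check W_{g,1+n}$ with $\omega^{\text{G}\beta\text{E}}_{g,1+n}$ I would prove the G$\beta$E version of \cref{lem:kernel}: since the deformation term in \eqref{formalRLEGUE} carries no rational prefactor, the relevant identity is
\begin{equation*}
	\tilde y(z)\,dx(z)\,\frac{\eta^{z_i}(z)}{\tilde y(z_i)\,dx(z_i)} = \frac{1}{x(z)-x(z_i)}\,\frac{dx(z)^2}{dx(z_i)},
\end{equation*}
which follows by matching poles and invariance under $\sigma$ exactly as in \cref{lem:kernel}. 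Substituting this into the recursion formula \eqref{eq:omegagn} (valid for this degree-two curve by the results of \cite{KidwaiOsuga2023}, with the source terms of \eqref{formalRLEGUE} in the role of $U_{g,1+n}$) turns \eqref{eq:omegagn} into exactly the rewritten loop equation for $\check W_{g,1+n}$, closing the induction.

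The main obstacle is not conceptual but lies in the curve-specific bookkeeping: checking that $\sigma(z)=1/(uz)$ is the deck transformation of $x$, that $\eta^0$ and the kernel above have the asserted pole structure on \eqref{eq:SpectralGbetaE}, and that the source terms of \eqref{formalRLEGUE} coincide (up to the sign convention of \cite{KidwaiOsuga2023}) with the $U_{g,1+n}$ in \eqref{eq:omegagn}; note in particular that, in contrast to the weight $\tfrac{(u_1+z)(u_2+z)}{v-z}$, there is no $U_{1,1}$ contribution here. Once these identities are in place the inductive step is formally identical to that of \cref{thm:w/o} and introduces no new idea.
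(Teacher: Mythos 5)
Your proposal is correct and follows essentially the same route as the paper: the paper's proof simply records the two curve-specific identities $\eta^0(z)=\frac{dx(z)}{2t^2\tilde y(z)}$ and $\tilde y(z)\,dx(z)\,\frac{\eta^{z_1}(z)}{\tilde y(z_1)\,dx(z_1)}=\frac{1}{x(z)-x(z_1)}\frac{dx(z)^2}{dx(z_1)}$ (the latter exactly your kernel identity, reflecting the absence of the rational prefactor in \eqref{formalRLEGUE}) and states that the rest runs parallel to \cref{thm:w/o}. Your fleshed-out induction, the treatment of the unstable and $(\tfrac12,1)$ cases, and the observations that there is no $U_{1,1}$ term and no shift for $(\tfrac12,1)$ all match the intended argument.
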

\begin{proof}
	In place of equations \eqref{eta0} and \eqref{kernel}, in this model we have
	\begin{equation*}
		\eta^0(z)= \frac{dx(z)}{2t^2\tilde y(z)},\qquad
		\tilde y(z) dx(z) \frac{\eta^{z_1}(z)}{\tilde y(z_1) dx(z_1)}=\frac{1}{x(z)-x(z_1)}\frac{dx(z)^2}{dx(z_1)}.
	\end{equation*}
	The rest of the arguments run parallel to the proof of Theorem \ref{thm:w/o}.
\end{proof}

 \subsection{Internal faces}
 \label{sub:InternalGBE}
 We can also extend this result to the model with internal faces. The
 special case $\bb = 0$ was already treated
 in~\cite{BonzomChapuyCharbonnierGarcia-Failde2024}, where the
 authors constructed two meromorphic functions $X,Y$ that define the
 spectral curve for this model (see Section 2.1  of \textit{loc.cit.}). Using these two functions and defining a shifted version $\tilde Y$ of $Y$ that is anti-invariant under the involution $\sigma$, one can show 
 (as in \cref{lem:Y^2}) that there exists a polynomial $M(X)$ of degree $D$ such that $M(X)|_{\epsilon=0}=1$ and
\begin{equation*}
    \tilde Y^2=\frac{(X-a)(X-b)M(X)^2}{4t^4}.
\end{equation*}
We then choose $\mathcal{P}_+=\{0,b_1,\ldots,b_D\}$ such that $b_i=\mathcal{O}(\epsilon)$ as $\epsilon\to0$, and the associated parameter $\bm\mu$ is set to
\begin{equation*}
\setlength\arraycolsep{1pt}
    \begin{matrix}
\mathcal{P}_+ & = & \{& 0,& b_1,&,\ldots,& b_D&\}\\
 {\bm \mu} & = & \{&\mu_0,&1,&,\ldots,&1&\},
\end{matrix}
\end{equation*}
For any $\mu_0\in\mathbb{C}$, one can show that the refined spectral curve $\mathcal{S}_{\bm\mu}^{{\text{G$\beta$E}},D}$ satisfies the refined deformation condition with respect to $\epsilon$, similar to Proposition \ref{prop:deform}. The proof is completely parallel, except that in place of the $\kappa$ used in the proof of \cref{lem:1/21}, we should define $\kappa \left(\mathcal{P}_+\right) = \{-D-1,1,\ldots,1\}$.

To state the result, we define the  function
$F^{{\text{G$\beta$E}},D}_{g,n}\left[k_1,\ldots,k_n;\epsilon\right]$  as
\begin{equation}
	\label{eq:InternalGBE}
	F_{g,n}^{{\text{G$\beta$E}},D}\left[k_1,\ldots,k_n;\epsilon\right] := \sum_{m \geq
		0}\frac{\epsilon^m}{m!}\sum_{k_{n+1},\dots,k_{n+m}=1}^DF^{{\text{G$\beta$E}}}_{g,n}[k_1,\dots,k_n,k_{n+1},\dots,k_{n+m}]\prod_{i=n+1}^{m+n}\frac{p_{k_i}}{k_i},
\end{equation} which  allows for internal faces of degree $i  \leq D$, which are weighted by $\epsilon p_i$.
Then, by repeating the proof of Theorem \ref{thm:w/}, we get the following theorem whose proof is omitted.

\begin{theorem}
  \label{theo:InternalGBE}
    Set $\mu_0=-1-D$. For $(g,n) \in\frac12\mathbb{Z}_{\geq0} \times \mathbb{Z}_{\geq1}$, the function $F^{{\text{G$\beta$E}},D}_{g,n}$ where we allow arbitrary many internal faces can be computed by refined topological recursion. More precisely, near $ z_i= 0 $ where $X(z_i) = \infty$, we have
    \[
    \omega_{g,n}(z_1,\cdots,z_n)  - \delta_{g,0}\delta_{n,1} \epsilon V'(X(z_1))dX(z_1)= \sum_{\mu_1,\cdots, \mu_n \geq 1} F_{g,n}^{{\text{G$\beta$E}},D}\left[\mu_1,\cdots, \mu_n;\epsilon\right] \prod_{i=1}^n \frac{dX(z_i)}{X(z_i)^{\mu_i+1}}.
    \]
\end{theorem}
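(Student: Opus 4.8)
The plan is to follow the proof of \cref{thm:w/} essentially verbatim, substituting the G$\beta$E spectral data of \cref{sub:InternalGBE} for the weight-dependent data used there. First I would introduce the expansion coefficients of the refined topological recursion correlators $\omega_{g,n}$ on $\mathcal{S}_{\bm\mu}^{{\text{G$\beta$E}},D}$, namely
\[
	\mathcal{F}^{{\text{G$\beta$E}},D}_{g,n}[k_1,\ldots,k_n;\epsilon] := (-1)^n \Res_{z_1=0}\cdots\Res_{z_n=0}\left(\prod_{i=1}^n X(z_i)^{k_i}\right)\omega_{g,n}(z_{[n]}),
\]
for $k_i \geq 1$, and analyze their dependence on the degree-counting parameter $\epsilon$. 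The identity to be proved then amounts to matching these coefficients (up to the $(0,1)$ shift) with the combinatorial series $F^{{\text{G$\beta$E}},D}_{g,n}$ of \eqref{eq:InternalGBE}.

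The core computation relies on the fact that $\mathcal{S}_{\bm\mu}^{{\text{G$\beta$E}},D}$ satisfies the refined deformation condition with respect to $\epsilon$, with contour a small circle around $z=0$ and $\Lambda = -V(X(z))$, as asserted in \cref{sub:InternalGBE} using the modified weights $\kappa(\mathcal{P}_+) = \{-D-1,1,\ldots,1\}$. Granting this, I would use \cref{lem:alpha-derivative} to pass $\partial_\epsilon$ through the residues, then apply the variational formula of \cref{thm:variation} to trade each $\delta_\epsilon^{(n)}$ for an insertion of $V(X)$ at a fresh integration point. Iterating $m$ times gives
\[
	\frac{\partial^m}{\partial\epsilon^m}\mathcal{F}^{{\text{G$\beta$E}},D}_{g,n}[k_1,\ldots,k_n;\epsilon] = (-1)^{n+m}\Res_{z_1=0}\cdots\Res_{z_{n+m}=0}\left(\prod_{i=1}^n X(z_i)^{k_i}\right)\left(\prod_{j=n+1}^{n+m}V(X(z_j))\right)\omega_{g,n+m}(z_{[n+m]}).
\]

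Next I would evaluate at $\epsilon=0$ using the G$\beta$E analogue of \cref{lem:limit}: with $\mu_0=-1-D$ and $\mu_{b_i}=1$, and since every $b_i = O(\epsilon)$ collides at the origin, the contour $C_+$ can be taken independent of $\epsilon$, so the limit commutes with the residues and the correlators reduce to those of \cref{thm:RTRGbetaE}. Together with \cref{thm:RTRGbetaE} this identifies $\partial_\epsilon^m \mathcal{F}^{{\text{G$\beta$E}},D}_{g,n}|_{\epsilon=0}$ with the order-$m$ Taylor coefficient of $F^{{\text{G$\beta$E}},D}_{g,n}$ in \eqref{eq:InternalGBE}; as two power series in $\epsilon$ agreeing at every order they coincide, which yields the stated identity together with the analytic continuation. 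Note that at $m=0$ the only surviving correction is the $(0,1)$ potential shift, because \cref{thm:RTRGbetaE} carries no $(\tfrac12,1)$ shift, in contrast to \cref{thm:w/}.

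The main obstacle is the first step, the refined deformation condition for the G$\beta$E curve, and within it the variational formula $\delta_\epsilon^{(1)}\omega_{\frac12,1} = -\Res_{z=0} V(X)\,\omega_{\frac12,2}$ for the half-integer unstable correlator. Reproducing the argument of \cref{prop:deform} here requires the G$\beta$E version of \cref{lem:1/21}, rewriting $\omega_{\frac12,1}$ as a contour integral over $C_+$ plus residue corrections at $z=0$ weighted by the modified $\kappa(\mathcal{P}_+)$, after which one repeats the contour manipulations of \cref{prop:deform}. The bookkeeping differs only in the values of $\kappa$ and in the absence of the $a_i$ points, so I expect no conceptual difficulty, only a careful re-derivation of the residue contributions.
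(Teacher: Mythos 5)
Your proposal is correct and takes essentially the same route as the paper: the paper's own proof consists of the remark that one repeats the proof of \cref{thm:w/} verbatim on $\mathcal{S}_{\bm\mu}^{{\text{G$\beta$E}},D}$, relying on the deformation condition asserted in \cref{sub:InternalGBE} (proved parallel to \cref{prop:deform} and \cref{lem:1/21} with $\kappa(\mathcal{P}_+)=\{-D-1,1,\ldots,1\}$), which is exactly your plan. Your steps --- the residue coefficients $\mathcal{F}^{{\text{G$\beta$E}},D}_{g,n}$, the iterated $\epsilon$-derivative via \cref{lem:alpha-derivative} and \cref{thm:variation}, the $\epsilon\to0$ reduction via the analogue of \cref{lem:limit} combined with \cref{thm:RTRGbetaE}, and the observation that no $(\tfrac12,1)$ shift appears --- coincide with the paper's intended argument.
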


\bibliographystyle{amsalpha}

\bibliography{biblio2015}

\newpage

\end{document}